\newtheorem{theorem}{Theorem}[section]
\newtheorem{lem}[theorem]{Lemma}
\newtheorem{prop}[theorem]{Proposition}
\theoremstyle{definition}
\theoremstyle{remark}
\newtheorem{rem}[theorem]{Remark}
\numberwithin{equation}{section}
\definecolor{red}{rgb}{1.0, 0.0, 0.0}
\newcommand{\Bea}{\begin{eqnarray*}}
	\newcommand{\Eea}{\end{eqnarray*}}
\newcommand{\Be} {\begin{equation*}}
	\newcommand{\Ee} {\end{equation*}}
\newcommand{\be} {\begin{equation}}
	\newcommand{\ee} {\end{equation}}
\newcommand{\bea} {\begin{eqnarray}}
	\newcommand{\eea} {\end{eqnarray}}
\renewcommand{\Re}{\operatorname{Re}}
\renewcommand{\Im}{\operatorname{Im}}
	\title[Restriction Theorem and Strichartz estimate for orthonormal functions  ]
	{Restriction Theorem and Strichartz estimate for orthonormal functions associated with the Special Hermite Operator}
	\author{Sunit Ghosh and Jitendriya Swain}
	\address{Sunit Ghosh  \endgraf Department of Mathematics
		\endgraf IIT Guwahati
		\endgraf Guwahati, Assam, India.}
	\email{g.sunit@iitg.ac.in, ghosh.sunit0197@gmail.com }
	\address{Jitendriya Swain,  Associate professor  \endgraf Department of Mathematics
			\endgraf IIT Guwahati
			\endgraf Guwahati, Assam, India.}
	\email{jitumath@iitg.ac.in}
	\keywords{Restriction estimate, Strichartz estimate, Schr\"odinger equation, orthonormal functions, Special Hermite operator, Special Hermite spectral projections} \subjclass[2010]{Primary 35Q41, 47B10; Secondary  35P10, 35B65}
	\date{\today}
\begin{document}
	
	\begin{abstract} Let $\mathcal{L}$ be the special Hermite operator on $\mathbb{C}^n$. As a continuation of the recent results in \cite{SG}, we establish new Strichartz estimates for systems of orthonormal functions associated with general flows of the form $e^{-it\phi(\mathcal{L})}$, where $ \phi : \mathbb{R}^{+} \to \mathbb{R} $ is a smooth function. Our approach relies on restriction estimates for the Fourier–special Hermite transform on the class of surfaces $\{(\lambda, \mu, \nu)\in \mathbb{R}\times\mathbb{N}_0^n\times\mathbb{N}_0^n : \lambda=\phi(2|\nu|+n)\}$.  We also discuss the endpoint case of the orthonormal Strichartz estimate for the Schr\"{o}dinger propagator $e^{-it\mathcal{L}}$. Furthermore, we generalize restriction estimates for the special Hermite spectral projections in the context of trace ideals (Schatten spaces).
\end{abstract}
\maketitle
\section{Introduction}
Consider the special Hermite operator (also called twisted Laplacian) $\mathcal{L}$ on $\mathbb{C}^n, n \geq 1$, defined by 
$$
\mathcal{L}= - \Delta_z + \frac{1}{4} |z|^2  - i \sum_{j=1}^{n}\left(x_j\frac{\partial}{\partial{y_j}} - y_j\frac{\partial}{\partial{x_j}}\right),
$$
 with $z=x+iy\in\mathbb{C}^n$. The spectrum of this operator is discrete and consists of points $2k + n, k \in \mathbb{N}_0 = \mathbb{N}\cup\{0\}$ and the eigenspaces associated to each of these eigenvalues are infinite dimensional. These eigenspaces are spanned by $\Phi_{\mu,\nu}, \mu, \nu \in \mathbb{N}^n_0$, $|\nu|=\nu_1+\cdots+\nu_n = k$, where $\Phi_{\mu, \nu}$ are the special Hermite functions (defined in Section \ref{sec2}).
For $\mu,\nu\in\mathbb{N}_0^n$, let $\hat{f}(\mu,\nu)=\langle f,\Phi_{\mu,\nu}\rangle$ denote the special Hermite coefficients. 

Let $ \phi : \mathbb{R}^{+} \to \mathbb{R} $ be a smooth function. The function $e^{-it\phi(\mathcal{L})}f$ denotes the solution to the initial value problem
\begin{align}\label{euq1}
	i \partial_t u(t, z)-{\phi(\mathcal{L})}u(t, z) &= 0,\quad z \in \mathbb{C}^n, \hspace{2pt} t \in \mathbb{R}\setminus \{0\}, \\
	\nonumber u(0,z) &= f(z).
\end{align}
The first part of the paper is concerned with extended versions of Strichartz estimates of the form
 \begin{align}\label{euq2}
			\left\|\sum_{j\in J} n_{j }\left|e^{-i t \phi(\mathcal{L})}f_{j }\right|^{2}\right\|_{L^p (I, L^q(\mathbb{C}^n))} \leqslant C N^\sigma\left(\sum_{j \in J}\left|n_{j}\right|^{\beta}\right)^{1/\beta},
		\end{align}
for all orthonormal system $(f_j)_j$ in $L^2(\mathbb{C}^n)$ with $supp \hat{f}_j \subset \{(\mu,\nu): \sqrt{2|\nu|+n }\leq N\}$ and all sequence $ (n_{j })_j \subset \mathbb{C}$, for a bounded interval $I\subset \mathbb{R}.$

More precisely, we prove (\ref{euq2}) under the assumption that $\phi$ is of power type  near $\infty,$ see (\ref{e1}). By choosing $\phi$ appropriately we obtain Strichartz estimates for the wave, Klein Gorden and fractional Schr\"{o}dinger equation associated with $\mathcal{L}$. Moreover, the estimate (\ref{euq2}) can be upgraded to a global-type estimate (see Theorem \ref{MAIN3}), provided the orthonormal system $(f_j)_j$ lies in the Sobolev space $H^s(\mathcal{L})$ with $s > \frac{\sigma}{2}$. To the best of our knowledge, such an estimate is not available in the literature in this setting.

Motivation to study estimates of the form (\ref{euq2}) arise in the context of many-body quantum mechanics. A system of $N$ independent fermions is described by a collection of $N$ orthonormal functions $f_1, \ldots, f_N$ in $L^2$. So functional inequalities that incorporate a significant number of orthonormal functions are highly valuable for the mathematical analysis of large-scale quantum systems (see, for instance, Lieb-Thirring \cite{lieb03, lieb05},  Frank-Lewin-Lieb-Seiringer \cite{frank}, Lewin-Sabin \cite{lewin1, lewin2}, Frank-Sabin\cite{frank1}, and the references therein). The idea in this line of investigation is to generalize the classical inequalities for a single-function to an orthonormal system. In \cite{frank1}, Frank and Sabin obtain  Fourier restriction estimate in Schatten spaces for the smooth compact surfaces in $\mathbb{R}^n$ with non-zero Gauss curvature (Stein-Tomas \cite{T1}) and by duality they generalize Stein-Tomas estimate for orthonormal  system (see also \cite{frank2}). In the same work, they also proved Fourier restriction estimate for quadratic surfaces  in $\mathbb{R}^n$ in the context of Schatten spaces. As an application they generalize the classical Strichartz estimates  \cite{st} to the orthonormal system for the Schr\"{o}dinger, wave, and Klein–Gordon equations (see \cite{lee, lee1} for more recent results). Strichartz estimates for orthonormal systems associated with the Schr\"{o}dinger propagator associated with several self-adjoint operators have been obtained in several works. For instance, we refer to Mondal–Swain \cite{ssm} for Hermite operator, Ghosh-Mondal-Swain \cite{SG} for special Hermite operator, Feng–Song \cite{Feng} for the Laguerre operator, Mondal–Song \cite{SSM} for $(k,a)$-generalized Laguerre operators, and Nakamura \cite{nakamura}, Wang-Zhang-Zhang \cite{WZZ} for Laplace Beltrami operator on compact manifolds.
 For more general results covering a wide class of dispersive equations see Hoshiya \cite{Hoshiya}, Feng-Mondal-Song-Wu \cite{Feng1}.

In \cite{SG}, the authors established a global Strichartz estimate for orthonormal systems associated with the Schr\"{o}dinger semigroup $e^{-it\mathcal{L}}$ by employing a restriction estimate for the Fourier–special Hermite transform on certain discrete surfaces. In this work, we derive a local restriction estimate for the Fourier–special Hermite transform on a broader class of discrete surfaces, which in turn yields an estimate of the form (\ref{euq2}).

For $F \in L^1(\mathbb{R}\times \mathbb{C}^n)$ the Fourier-special Hermite transform of $F$ is given by
\begin{align}\label{Fourier-special Hermite transform}
	\hat{F}(\lambda,\mu, \nu)=(2\pi)^{-\frac{1}{2}}\int_{\mathbb{R}}\int_{\mathbb{C}^n}F(t, w)\Phi_{\mu,\nu}(w)e^{i\lambda t}\,dwdt, \quad \forall \mu , \nu\in \mathbb{N}_0^n, \lambda\in \mathbb{R}.
\end{align} 
If $F \in L^2(\mathbb{R}\times\mathbb{C}^n)$, then $\hat{F} \in L^2(\mathbb{R}\times\mathbb{N}_0^{2n}, d\lambda\times d\sigma)$, where $d\lambda$ and $d\sigma$ denotes the Lebesgue measure on $\mathbb{R}$ and the counting measure on $\mathbb{N}_0^{2n}$, respectively. The
Plancherel formula is of the form $$\|F\|_{L^2(\mathbb{R}\times\mathbb{C}^n)} = \|\hat{F}(\lambda,\mu,\nu)\|_{L^2(\mathbb{R}\times\mathbb{N}_0^{2n})}$$
and the inverse Fourier–special Hermite transform is given by
$$F(t,z)= (2\pi)^{-\frac{1}{2}}\int_{\mathbb{R}} \sum_{(\mu, \nu) \in\mathbb{N}^{2n}_0}\hat{F}(\lambda, \mu, \nu)\Phi_{\mu,\nu}(z) e^{-i t \lambda}d\lambda.$$

 Given a surface $S$ in $\mathbb{R} \times \mathbb{N}_0^{2n}$ with a positive measure $d\Sigma$, we define the restriction operator $(\mathcal{R}_SF):=\{\hat{F}(\lambda,\mu, \nu)\}_{(\lambda,\mu, \nu)\in S}$  and the operator dual to $\mathcal{R}_S$ (called the extension operator)  as
$$\mathcal{E}_S(\{\hat{F}(\lambda,\mu, \nu)\})(t,z) :=(2\pi)^{-\frac{1}{2}}\int_{ S}e^{-i t \lambda} \hat{F}(\lambda,\mu, \nu) \Phi_{\mu,\nu} (z ) d\Sigma.$$
We consider the following special Hermite restriction problem: Let $I \subset \mathbb{R}$ be an interval.

\noindent{\bf Problem 1:} For which exponents $1\leq p, q\leq 2,$ the Fourier-Hermite restriction operator $\mathcal{R}_S$ is bounded from $L^p(I, L^q(\mathbb{C}^n))$ to $L^2(S, d\Sigma)$, i. e.,
$$\|\mathcal{R}_S f\|_{L^2(S, d\Sigma)}\leq \|f\|_{L^p(I, L^q(\mathbb{C}^n))} ?$$
By duality, it is not difficult to prove that the boundedness of $\mathcal{R}_S$ from $ L^p(I, L^q(\mathbb{C}^n))$ to $L^2(S, d\sigma)$  is equivalent to the boundedness of the operator $ \mathcal{E}_S(\mathcal{E}_S)^*$ from  $L^p(I, L^q(\mathbb{C}^n))$ to $L^{p’}(I, L^{q’}(\mathbb{C}^n))$, where $\frac{1}{p} + \frac{1}{p'} = 1$.
Therefore, Problem 1 can be re-written as follows:
\vspace{2pt}\\
\noindent{\bf Problem 2:} For which exponents  $1\leq p, q\leq 2,$  the operator  $\mathcal{E}_S(\mathcal{E}_S)^*$ is bounded from $ L^p(I, L^q(\mathbb{C}^n))$ to $ L^{p'}(I, L^{q'}(\mathbb{C}^n))$
$$\|\mathcal{E}_S(\mathcal{E}_S)^* f\|_{L^{p’}(I, L^{q’}(\mathbb{C}^n))}\leq \|f\|_{L^p(I, L^q(\mathbb{C}^n))} ?$$

   Again, by H\"{o}lder's inequality, the boundedness of $\mathcal{E}_S(\mathcal{E}_S)^*$  from $ L^p(I, L^q(\mathbb{C}^n))$ to $ L^{p'}(I, L^{q'}(\mathbb{C}^n))$ is equivalent to the following: for any $W_1, W_2 \in L^{{2p}/{(2-p)}}(I, L^{{2q}/{(2-q)}}(\mathbb{C}^n))$, the operator $W_1T_SW_2$ is bounded from $L^2(I\times \mathbb{C}^n)$ to $L^2(I\times\mathbb{C}^n)$ with the estimate
  \begin{align}\label{IN12}
  	\hspace{-8pt}\|W_1\mathcal{E}_S(\mathcal{E}_S)^*W_2\|_{L^2(I\times \mathbb{C}^n) \rightarrow L^2(I\times \mathbb{C}^n)} \hspace{-2pt}\leq \hspace{-2pt} C \|W_1\|_{L^{{2p}/{(2-p)}}(I, L^{{2q}/{(2-q)}}(\mathbb{C}^n))} \|W_2\|_{L^{{2p}/{(2-p)}}(I, L^{{2q}/{(2-q)}}(\mathbb{C}^n))}
  \end{align}
with $C > 0$ independent of $W_1, W_2$.

For certain surface $S$, the authors in \cite{SG} proved a stronger result than the estimate in (\ref{IN12}), proving that the operator $W_1\mathcal{E}_S(\mathcal{E}_S)^*W_2$  belongs to Schatten class $\mathcal{G}^{\alpha}(L^2((-\pi,\pi)\times \mathbb{C}^n))$, for some $\alpha > 0$ (see Subsection \ref{subsec2.2} for defination of Schatten class). More precisely,
 \begin{theorem}{\cite{SG}}
 Let $n,p,q \geq1$ and let the surface $S=\{(\lambda, \mu, \nu)\in \mathbb{R}\times\mathbb{N}_0^n\times\mathbb{N}_0^n : \lambda=2|\nu|+n\}$ with respect to the counting measure. Suppose $$ q > 2n+1 \quad\mbox{and}\quad \frac{2}{p} + \frac{2n}{q} = 1,$$
 then the inequality 
 \begin{align}
 	\|W_1T_{S}W_2\|_{\mathcal{G}^{q}(L^2((-\pi,\pi)\times \mathbb{C}^n))} \leq C \|W_1\|_{L^{p}((-\pi,\pi), L^{q}(\mathbb{C}^n))} \|W_2\|_{L^{p}((-\pi,\pi), L^{q}(\mathbb{C}^n))}
 \end{align}
holds for all $W_1, W_2$ with a constant $C > 0$ independent of $W_1,W_2$.
 \end{theorem}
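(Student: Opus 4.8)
The plan is to recognize $W_1 T_S W_2$ as a weighted $TT^{*}$ operator for the special Hermite Schr\"odinger propagator and then run a complex-interpolation argument in the Schatten scale, in the spirit of the Frank--Sabin framework. First I would compute the kernel of $T_S=\mathcal{E}_S(\mathcal{E}_S)^{*}$ explicitly. Grouping the special Hermite functions by $|\nu|=k$ turns the inner sum $\sum_{|\nu|=k}\sum_{\mu}\Phi_{\mu,\nu}(z)\overline{\Phi_{\mu,\nu}(w)}$ into the kernel of the spectral projection $P_k$, and since $\sum_k e^{-i\tau(2k+n)}P_k=e^{-i\tau\mathcal{L}}$ one finds
$$T_S g(t,z)=(2\pi)^{-1}\int_I\big(e^{-i(t-s)\mathcal{L}}g(s,\cdot)\big)(z)\,ds.$$
Thus $W_1 T_S W_2$ has integral kernel $(2\pi)^{-1}W_1(t,z)\,K_{e^{-i(t-s)\mathcal{L}}}(z,w)\,W_2(s,w)$, and the whole problem is reduced to a Schatten bound for the $TT^{*}$ operator of $e^{-it\mathcal{L}}$ on the space--time slab $(-\pi,\pi)\times\mathbb{C}^n$.

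The two scalar ingredients I would record are: unitarity $\|e^{-i\tau\mathcal{L}}\|_{L^2\to L^2}=1$, and the Mehler-type dispersive estimate $\|e^{-i\tau\mathcal{L}}\|_{L^1\to L^\infty}\lesssim|\sin\tau|^{-n}$, which follows from the explicit twisted-convolution (Laguerre) form of the kernel $\sum_k e^{-i\tau(2k+n)}\varphi_k$. Because the eigenvalues are $2k+n$, the propagator is $\pi$-periodic up to a phase, $e^{-i(\tau+\pi)\mathcal{L}}=e^{-i\pi n}e^{-i\tau\mathcal{L}}$; I would use this to collapse the several apparent singularities of $|\sin\tau|^{-n}$ for $\tau=t-s\in(-2\pi,2\pi)$ to the single diagonal singularity at $\tau=0$, where $|\sin\tau|^{-n}\sim|\tau|^{-n}$.

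With these in hand I would set up Stein's interpolation theorem for an analytic family $\{A_\zeta\}$ of operators valued in Schatten classes, normalized so that $A_{\theta}=W_1 T_S W_2$ at the interior value $\theta=2/q$, built by inserting a complex power $|\sin(t-s)|^{\kappa(\zeta-\theta)}/\Gamma(\cdot)$ into the time convolution and using $[\mathcal{G}^{\infty},\mathcal{G}^{2}]_{\theta}=\mathcal{G}^{2/\theta}$. On the line $\Re\zeta=1$ I would estimate the Hilbert--Schmidt norm directly: after integrating out $z,w$ the square of the $\mathcal{G}^2$-norm is controlled by $\iint_{I\times I}\|W_1(t)\|_{L^2_x}^2\,|\sin(t-s)|^{-\gamma}\,\|W_2(s)\|_{L^2_x}^2\,dt\,ds$, and a computation with the admissible slope $\kappa$ gives $\gamma=2n+2-q$; one-dimensional Hardy--Littlewood--Sobolev controls this kernel precisely when $\gamma<1$, that is when $q>2n+1$, which is exactly the hypothesis of the theorem. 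On the line $\Re\zeta=0$ the inserted power has modulus $|\sin(t-s)|^{-1}$, a critical non-integrable convolution kernel in time, so its $L^2(I\times\mathbb{C}^n)$-operator norm must be extracted from cancellation, via the bounded Fourier multiplier of the homogeneous degree $-1$ distribution $|\tau|^{-1+i\kappa\xi}$ together with unitarity of $e^{-i\tau\mathcal{L}}$; this yields the $\mathcal{G}^{\infty}$ bound with admissibly growing constants. Interpolating forces the spatial exponent to coincide with the Schatten index $q=2/\theta$, and matching the two time exponents reproduces the scaling relation $\tfrac{2}{p}+\tfrac{2n}{q}=1$.

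The main obstacle I anticipate is the endpoint bookkeeping along the critical scaling line. It is precisely there that $\kappa\theta=1$, so the $\Re\zeta=0$ boundary carries a critical degree $-1$ time kernel whose $L^2$-boundedness is a pure cancellation phenomenon (a Calder\'on--Zygmund/Fourier-multiplier estimate in time) rather than a matter of integrating the kernel; simultaneously the Hilbert--Schmidt boundary must remain in the open Hardy--Littlewood--Sobolev range $\gamma<1$. These two competing requirements are exactly what pin down the condition $q>2n+1$ and exclude the endpoint. A secondary technical point is the careful tracking of the phase and the $\pi$-periodicity, so that the dispersive bound $|\sin\tau|^{-n}$ is genuinely usable across all of $(-\pi,\pi)\times(-\pi,\pi)$ and not merely near the diagonal.
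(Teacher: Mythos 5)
Your overall strategy is the same as the one used for Theorem \ref{MAIN} in this paper (and, for the statement at hand, in \cite{SG}): write $T_S=\mathcal{E}_S\mathcal{E}_S^{*}$ as the $TT^{*}$ operator of $e^{-it\mathcal{L}}$ acting by twisted convolution, record the two scalar inputs (unitarity on $L^2$ and the Mehler bound $\|e^{-i\tau\mathcal{L}}\|_{L^1\to L^\infty}\lesssim|\sin\tau|^{-n}$), and run Stein's analytic interpolation in the Schatten scale between a Hilbert--Schmidt line controlled by one-dimensional Hardy--Littlewood--Sobolev and a $\mathcal{G}^{\infty}$ line obtained from cancellation in the critical time kernel (the paper does this via Vega's Hilbert-transform argument, you via the bounded multiplier of $|\tau|^{-1+i\kappa\xi}$; these are the same mechanism). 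Your use of the exact kernel is the correct substitute for the frequency-localized dispersive estimate (\ref{L1}) that the paper needs for general $\phi$, and your exponent bookkeeping ($\gamma=2n+2-q$, spatial exponent $q=2/\theta$, recovery of $\tfrac{2}{p}+\tfrac{2n}{q}=1$) is accurate.

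There is, however, a genuine gap in the claimed range. The Hardy--Littlewood--Sobolev step on the Hilbert--Schmidt line requires $0<\gamma<1$, not merely $\gamma<1$. Since you normalize $\kappa\theta=1$ so that the center of the analytic family is exactly $T_S$, the exponent is pinned at $\gamma=2n+2-q$, and for $q>2n+2$ one has $\gamma<0$: the bilinear bound
\begin{equation*}
\int_I\!\!\int_I \|W_1(t)\|_{L^2(\mathbb{C}^n)}^{2}\,|\sin(t-s)|^{-\gamma}\,\|W_2(s)\|_{L^2(\mathbb{C}^n)}^{2}\,dt\,ds
\;\lesssim\; \bigl\|\|W_1\|_{L^2}^2\bigr\|_{L^{a}_t}\bigl\|\|W_2\|_{L^2}^2\bigr\|_{L^{a}_t}
\end{equation*}
with the interpolation-dictated exponent $a=2/(2-\gamma)<1$ is then false (on a bounded interval it would amount to a reverse H\"older inequality). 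So your argument, as written, proves the theorem only for $2n+1<q\le 2n+2$; the whole range $q>2n+2$ (equivalently $p$ close to $2$) is missing, and your phrase ``precisely when $\gamma<1$, that is when $q>2n+1$'' conflates the two constraints on $\gamma$. The standard repair is exactly what the paper does at the end of the proof of Theorem \ref{MAIN}: prove the trivial endpoint $\|W_1T_SW_2\|_{\mathcal{G}^{\infty}}\le C\|W_1\|_{L^{2}(I,L^{\infty}(\mathbb{C}^n))}\|W_2\|_{L^{2}(I,L^{\infty}(\mathbb{C}^n))}$, which follows from unitarity of the propagator together with the duality principle (Lemma \ref{CH3dual}), and then complex-interpolate it with your estimate at the edge of the directly accessible range; this interpolation stays on the scaling line $\tfrac2p+\tfrac{2n}{q}=1$ and fills in all $q>2n+1$. (Alternatively, one can decouple the slope $\kappa$ from the forced value $q/2$, so that both boundary lines become subcritical for every $q>2n+1$; the resulting estimate lands off the scaling line, but on the bounded interval $(-\pi,\pi)$ the inclusion of time-Lebesgue spaces converts it into the stated one.) Apart from this, your outline is sound, including the point that the singularities of $|\sin(t-s)|^{-n}$ at $t-s=\pm\pi$ are handled by the $\pi$-periodicity $e^{-i(\tau+\pi)\mathcal{L}}=(-1)^n e^{-i\tau\mathcal{L}}$.
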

\noindent Applying the duality principle  the following orthonormal Strichartz estimate for the Schr\"odinger semigroup associated to the special Hermite operator is obtaind in \cite{SG}.
\begin{theorem} \cite{SG}\label{main1}
	Let $n \geq 1$ and let $p, q \geq 1$ satisfies  the condition   $ 1\leq q < \frac{2n+1}{2n-1}\text { and } \frac{1}{p}+\frac{n}{q}=n.$ Then for any orthonormal system $(f_j)_j$ in $L^{2}\left(\mathbb{C}^n\right)$ and all sequence $ (n_{j })_j \subset \mathbb{C}$, we have
	\begin{align}\label{CH0A1}
		\left\|\sum_{j\in J} n_{j }\left|e^{-i t \mathcal{L}}f_{j }\right|^{2}\right\|_{L^p ((-\pi,\pi), L^q(\mathbb{C}^n))} \leqslant C \left(\sum_{j \in J}\left|n_{j}\right|^{\beta}\right)^{1/\beta},
	\end{align}
	where $C>0$ is independent of  $(f_j)_j$ and $(n_{j })_j $, with $\beta = \frac{2q}{q+1}$.
\end{theorem}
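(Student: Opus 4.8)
The plan is to obtain Theorem \ref{main1} from the Schatten-class restriction estimate stated just above, via the Frank--Lewin--Lieb--Seiringer/Frank--Sabin duality principle. The first point is that the extension operator attached to $S=\{\lambda=2|\nu|+n\}$ is, up to the constant $(2\pi)^{-1/2}$, the Schr\"odinger propagator itself: identifying a function on $S$ with a sequence $\{a_{\mu,\nu}\}_{(\mu,\nu)\in\mathbb{N}_0^{2n}}$ and using the spectral expansion $e^{-it\mathcal{L}}f=\sum_{\mu,\nu}e^{-it(2|\nu|+n)}\hat f(\mu,\nu)\Phi_{\mu,\nu}$, one checks that $\mathcal{E}_S(\{a_{\mu,\nu}\})(t,z)=(2\pi)^{-1/2}\big(e^{-it\mathcal{L}}g\big)(z)$ with $g=\sum_{\mu,\nu}a_{\mu,\nu}\Phi_{\mu,\nu}$. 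Computing $T_S=\mathcal{E}_S(\mathcal{E}_S)^{*}$ against the Fourier--special Hermite transform then gives $T_SF(t,z)=(2\pi)^{-1}\int_{\mathbb{R}}\big(e^{-i(t-s)\mathcal{L}}F(s,\cdot)\big)(z)\,ds$, that is $T_S=(2\pi)^{-1}AA^{*}$ for $A:=e^{-it\mathcal{L}}\colon L^2(\mathbb{C}^n)\to L^{2p}((-\pi,\pi),L^{2q}(\mathbb{C}^n))$. So the restriction theorem is precisely a Schatten bound on $AA^{*}$ conjugated by weights.

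The duality step I would invoke is the equivalence, for a fixed bounded $A$, between the orthonormal inequality
\[
\Big\|\sum_j n_j\,|Af_j|^2\Big\|_{L^{p}((-\pi,\pi),L^{q}(\mathbb{C}^n))}\le C\,\Big(\sum_j|n_j|^{\beta}\Big)^{1/\beta}
\]
over all $L^2$-orthonormal $(f_j)$ and all $(n_j)$, and the single-weight Schatten bound
\[
\big\|\,W\,AA^{*}\,\overline{W}\,\big\|_{\mathcal{G}^{\beta'}}\le C\,\|W\|^{2}_{L^{2p'}((-\pi,\pi),L^{2q'}(\mathbb{C}^n))}.
\]
This is proved by writing $\sum_j n_j|Af_j|^2=\rho_{A\gamma A^{*}}$ for the density operator $\gamma=\sum_j n_j\,|f_j\rangle\langle f_j|$ with $\|\gamma\|_{\mathcal{G}^{\beta}}=\|(n_j)\|_{\ell^\beta}$, dualizing $L^pL^q$ against $L^{p'}L^{q'}$ and $\mathcal{G}^{\beta}$ against $\mathcal{G}^{\beta'}$ via $\mathrm{Tr}(A\gamma A^{*}V)=\mathrm{Tr}(\gamma A^{*}VA)$, factoring $V=\overline{W}W$, and using that $XY$ and $YX$ have the same nonzero singular values to replace $A^{*}VA$ by $WAA^{*}\overline{W}$.

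What remains is to match exponents, which is pure bookkeeping. From $\beta=\frac{2q}{q+1}$ one gets $\beta'=\frac{2q}{q-1}=2q'$, so the Schatten index demanded on the right equals the spatial weight exponent $2q'$ -- exactly the form of the quoted restriction theorem, whose Schatten index and spatial exponent coincide. Writing $p_0=2p'$ and $q_0=2q'$, the hypothesis $\frac1p+\frac nq=n$ rearranges to $\frac1{p'}+\frac n{q'}=1$, i.e. $\frac{2}{p_0}+\frac{2n}{q_0}=1$, and $q<\frac{2n+1}{2n-1}$ rearranges to $q_0>2n+1$. These are precisely the hypotheses under which the restriction theorem holds; taking $W_1=W_2=W$ there produces the single-weight Schatten bound above (with constant independent of $W$), and the duality principle turns it into the assertion of Theorem \ref{main1}.

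I expect essentially all of the genuine difficulty to sit in the preceding restriction theorem, not here: granting that estimate, the present one is a formal consequence. The only steps warranting care are the operator identification $T_S=(2\pi)^{-1}AA^{*}$, which rests on the Plancherel identity for the Fourier--special Hermite transform, and the observation that the finite window $(-\pi,\pi)$ is lossless. The latter holds because $\mathrm{spec}(\mathcal{L})\subset n+2\mathbb{N}_0$ gives $e^{-i(t+\pi)\mathcal{L}}=(-1)^n e^{-it\mathcal{L}}$, so the density $\sum_j n_j|e^{-it\mathcal{L}}f_j|^2$ is $\pi$-periodic in $t$; this also explains why no global-in-time gain is available for the pure propagator $e^{-it\mathcal{L}}$, in contrast to the flows $e^{-it\phi(\mathcal{L})}$ treated later.
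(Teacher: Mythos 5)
Your proposal is correct and follows essentially the same route as the paper: Theorem \ref{main1} is obtained (in \cite{SG}, as the paper indicates) precisely by identifying the extension operator for $S=\{\lambda=2|\nu|+n\}$ with the propagator $e^{-it\mathcal{L}}$ and applying the duality principle (Lemma \ref{CH3dual}) to the Schatten-class restriction estimate, with exactly the exponent bookkeeping you carried out ($\beta'=2q'$, $2q'>2n+1$, and $\tfrac{1}{p'}+\tfrac{n}{q'}=1$). The only cosmetic remark is that the two-weight bound specializes with $W_1=W$, $W_2=\overline{W}$ (not $W_2=W$), which changes nothing since $\|\overline{W}\|=\|W\|$.
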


We now introduce a more general discrete surface in order to derive a localized restriction estimate.
 Let $\phi:\mathbb{R}^+\to \mathbb{R}$ be a smooth function such that there exists $0<m\leq 1$ such that 
\begin{equation}\label{e1}
\phi'(r)\sim r^{m-1} \quad\mbox{and}\quad |\phi''(r)|\geq r^{m-2}, ~r\geq 1.
\end{equation} 
Consider the discrete surface $S_{\phi}=\{(\lambda, \mu, \nu)\in \mathbb{R}\times\mathbb{N}_0^n\times\mathbb{N}_0^n : \lambda=\phi(2|\nu|+n)\}$ with respect to the localized measure $d\Sigma_N$, $N\geq 2n $ defined by
\begin{align}\label{az1}
	\int_{S_{\phi}} F\left(\lambda,\mu,\nu\right) d\Sigma_N = \sum_{\mu,\nu \in \mathbb{N}_0^n} {F(\phi(2|\nu|+n),\mu,\nu)}\psi\left(\frac{\sqrt{2|\nu|+n}}{N}\right),
\end{align} where  $\psi\in C_0^\infty(\mathbb{R})$ such that $\chi_{[-1,1]}\leq\psi\leq\chi_{[-2,2]},$ where $\chi_A$ denotes the characteristic function of the set $A\subset \mathbb{R}$. 

Let  $\mathcal{E}_{S_{\phi,N}}$ be the corresponding extension operator. We obtain the following Schatten estimate. 
\begin{theorem}\label{MAIN}
	Let $n,p,q \geq1$ and $N\geq 2n$. Let $\phi:\mathbb{R}^+\to \mathbb{R}$ be a smooth function satisfies (\ref{e1}) with $0< m \leq 1$. Let $S_{\phi}=\{(\lambda, \mu, \nu)\in \mathbb{R}\times\mathbb{N}_0^n\times\mathbb{N}_0^n : \lambda=\phi(2|\nu|+n)\}$ with respect to the measure $d\Sigma_N$. Suppose $$ q>2n \quad\mbox{and}\quad \frac{2}{p} + \frac{2n-1}{q} = 1 $$
 then the inequality 
 \begin{align}\label{Sch1}
 	\|W_1\mathcal{E}_{S_{\phi,N}}\mathcal{E}_{S_{\phi,N}}^*W_2\|_{\mathcal{G}^{q}(L^2((-T_0, T_0)\times \mathbb{C}^n))} \leq C N^\sigma \|W_1\|_{L^{p}((-T_0,T_0), L^{q}(\mathbb{C}^n))} \|W_2\|_{L^{p}((T_0,T_0), L^{q}(\mathbb{C}^n))}
 \end{align}
holds for all $W_1, W_2$ with a constant $C > 0$ independent of $W_1,W_2$, with $\sigma=\frac{2(2n(1-m)+m)}{q}.$
\end{theorem}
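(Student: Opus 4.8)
The plan is to follow the complex-interpolation scheme for Schatten-class bounds of Frank--Sabin \cite{frank1}, as adapted to the discrete special Hermite setting in \cite{SG}: compute the kernel of $\mathcal{E}_{S_{\phi,N}}\mathcal{E}_{S_{\phi,N}}^*$, isolate an oscillatory sum over the spectral index whose amplitudes are the special Hermite projection kernels, establish a Hilbert--Schmidt endpoint together with a dispersive operator-norm endpoint, and interpolate in the Schatten scale while tracking the powers of $N$. The role of the power-type hypothesis (\ref{e1}) is to replace the closed-form Mehler kernel available in the case $\phi(r)=r$ (which is what makes the argument of \cite{SG} work on a fixed interval) by a van der Corput estimate, at the cost of the frequency localization $\psi$ and the $N$-loss.

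First I would unwind the definitions. Using (\ref{az1}) and the definition of the extension operator, the composition $\mathcal{E}_{S_{\phi,N}}\mathcal{E}_{S_{\phi,N}}^*$ acts on $L^2((-T_0,T_0)\times\mathbb{C}^n)$ through the integral kernel
\[
K(t,z;s,w)=\frac{1}{2\pi}\sum_{k=0}^{\infty}e^{-i(t-s)\phi(2k+n)}\,\psi\!\left(\tfrac{\sqrt{2k+n}}{N}\right)\Pi_k(z,w),
\]
where $\Pi_k(z,w)=\sum_{|\nu|=k}\sum_{\mu}\Phi_{\mu,\nu}(z)\overline{\Phi_{\mu,\nu}(w)}$ is the kernel of the special Hermite spectral projection onto the eigenvalue $2k+n$; up to the symplectic phase $e^{\frac{i}{2}\operatorname{Im}(z\cdot\bar w)}$, $\Pi_k$ is a Laguerre function of $\tfrac12|z-w|^2$. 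Thus, writing $\tau=t-s$, matters reduce to the one-parameter family of twisted-convolution kernels $G_{N,\tau}=\sum_k e^{-i\tau\phi(2k+n)}\psi(\tfrac{\sqrt{2k+n}}{N})\Pi_k$, whose support in $k$ is confined to $k\lesssim N^2$ by $\psi$. By the reduction already recorded in (\ref{IN12}) and the fact that the Schatten classes $\mathcal{G}^q$ form a complex interpolation scale, it suffices to prove two endpoint bilinear estimates for $(W_1,W_2)\mapsto W_1\mathcal{E}_{S_{\phi,N}}\mathcal{E}_{S_{\phi,N}}^*W_2$ and to interpolate between them.

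The two ingredients are a Hilbert--Schmidt ($\mathcal{G}^2$) bound and an operator-norm ($\mathcal{G}^\infty$) bound. For the former, $\|W_1\mathcal{E}_{S_{\phi,N}}\mathcal{E}_{S_{\phi,N}}^*W_2\|_{\mathcal{G}^2}^2=\iint|W_1(t,z)|^2|K(t,z;s,w)|^2|W_2(s,w)|^2\,dz\,dw\,dt\,ds$, so by Plancherel in $t$, orthogonality of the $\Pi_k$, and the localization $\sqrt{2k+n}\le 2N$, this is governed by the size of the projection kernels (e.g.\ $\Pi_k(z,z)\sim k^{n-1}$, summed over $k\lesssim N^2$) and yields a bound with a definite power of $N$. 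The decisive ingredient is the operator-norm endpoint, which rests on a dispersive decay estimate for $G_{N,\tau}$. Here I would bound the oscillatory sum $\sum_k e^{-i\tau\phi(2k+n)}\psi(\tfrac{\sqrt{2k+n}}{N})\Pi_k$ by van der Corput's lemma in the summation variable $k$: the phase $k\mapsto\tau\phi(2k+n)$ has second difference comparable to $\tau\,\phi''(2k+n)$, so the lower bound $|\phi''(r)|\ge r^{m-2}$ in (\ref{e1}) supplies the curvature driving decay in $|\tau|$, while $\phi'(r)\sim r^{m-1}$ controls the first derivative and the effective number of contributing terms. Combined with uniform asymptotics for the Laguerre kernels on the support of $\psi$, this yields a bound of the form $\|G_{N,\tau}\|_{L^1(\mathbb{C}^n)\to L^\infty(\mathbb{C}^n)}\lesssim N^{a}(1+N^{b}|\tau|)^{-c}$, which after Hardy--Littlewood--Sobolev in the time variable converts into the operator-norm estimate with weights in a suitable $L^{a_0}_tL^\infty_x$.

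Finally, I would glue the two endpoints by Stein's analytic interpolation in the Schatten scale, embedding $\mathcal{E}_{S_{\phi,N}}\mathcal{E}_{S_{\phi,N}}^*$ into an analytic family that deforms the spectral localization (equivalently, the order of the time singularity of the dispersive kernel), with the $\mathcal{G}^2$ bound on one boundary line and the $\mathcal{G}^\infty$ bound on the other. Tracking the powers of $N$ and the time-integrability through the interpolation lands on $\mathcal{G}^q$ with weights in $L^p((-T_0,T_0),L^q(\mathbb{C}^n))$ exactly on the line $\frac{2}{p}+\frac{2n-1}{q}=1$, valid for $q>2n$ (the threshold at which the interpolation closes), and produces the factor $N^\sigma$ with $\sigma=\tfrac{2(2n(1-m)+m)}{q}$. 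The hard part will be the dispersive endpoint: one must marry the van der Corput cancellation in the spectral index---where the hypothesis on $|\phi''|$ is essential---with sharp, uniform control of the special Hermite projection kernels and the $\psi$-localization, so that both the correct decay rate in $|t-s|$ and the exact exponent of $N$ emerge; extracting the sharp $N$-power uniformly as $m$ ranges over $(0,1]$ is the most delicate bookkeeping.
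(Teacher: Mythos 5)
Your proposal has the right skeleton, and it is essentially the paper's: compute the twisted-convolution kernel $H_{\phi,N}$, feed a frequency-localized dispersive estimate into a Frank--Sabin-type analytic family obtained by multiplying the kernel by $t^{-1-\alpha}$, prove Schatten bounds on the two boundary lines, and apply Stein's analytic interpolation while tracking powers of $N$. Your van der Corput route to the dispersive input is an acceptable substitute for the paper's citation of the abstract localized estimate (\ref{L1}) from \cite{BUI}, with one caveat: since $\phi'(r)\sim r^{m-1}$ changes size across scales when $m<1$, the ball-localized sum over $k\lesssim N^2$ must be split into dyadic annuli $\sqrt{2k+n}\sim 2^j$ in any case --- this is exactly the Littlewood--Paley decomposition the paper performs, each annulus carrying the loss $2^{j((1-m)2n+m)}$ and the geometric sum over $2^j\le N$ producing $N^\sigma$.

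The genuine gap is that you have assigned the two key estimates to the wrong boundary lines, and this assignment cannot be repaired by bookkeeping. You propose to get the $\mathcal{G}^2$ endpoint from Plancherel/orthogonality of the \emph{undeformed} kernel (which uses no dispersion and yields weights in $L^2_tL^2_x$), and to get the $\mathcal{G}^\infty$ endpoint from the dispersive bound plus Hardy--Littlewood--Sobolev (weights in $L^{a_0}_tL^\infty_x$). Interpolating these two for a target Schatten index $q$ forces $\theta=2/q$, and then the spatial exponents force the $\mathcal{G}^\infty$ endpoint to have spatial weight $L^\infty_x$ and the time exponents give $\frac1p=\frac1q+\bigl(1-\frac2q\bigr)\frac1{a_0}$; solving for the theorem's line $\frac2p+\frac{2n-1}{q}=1$ requires $\frac1{a_0}=\frac{q-(2n+1)}{2(q-2)}<0$ throughout the range $2n<q<2n+1$, which is impossible. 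The point of the $t^{-1-\alpha}$ deformation is precisely to inject the dispersion into the \emph{Hilbert--Schmidt} line: at $\operatorname{Re}\alpha=-q/2$ the kernel bound (\ref{CH2two}) gives $|t-t'|^{-(2n+1-q)}$ after squaring, which is exactly HLS-integrable for $2n<q\le 2n+1$, whereas for the undeformed operator the squared kernel decays like $|t-t'|^{-(2n-1)}$, which is not locally integrable for any $n\ge 1$, so no useful $\mathcal{G}^2$ bound exists without the deformation. Correspondingly, the true $\mathcal{G}^\infty$ line $\operatorname{Re}\alpha=0$ is not a dispersive statement at all: it is the $L^2(I\times\mathbb{C}^n)\to L^2(I\times\mathbb{C}^n)$ boundedness of the kernel $t^{-1+i\tau}$ times the propagator, which the paper proves via Plancherel in the special Hermite basis together with the $L^2$-boundedness of the truncated singular integral $\Theta\mapsto\int_{\varepsilon<|t'|<T_0}t'^{-1+i\tau}\Theta(t-t')\,dt'$ (a Hilbert-transform bound, citing Vega); this ingredient is entirely absent from your proposal. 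Finally, your claim that the interpolation closes for all $q>2n$ is not accurate even after these repairs: HLS restricts the analytic-family argument to $2n<q\le 2n+1$, and the paper obtains the remaining range $q\ge 2n+1$ by a separate bilinear interpolation of the $q=2n+1$ case against the trivial estimate (\ref{label}), which is itself deduced by the duality principle from $\sup_{t\in I}\|\mathcal{E}_{S_{\phi,N}}f_j(t,\cdot)\|_{L^2(\mathbb{C}^n)}\le 1$.
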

\noindent As a consequence, we derive the orthonormal Strichartz estimate for a more general class of semigroups associated with the special Hermite operator in the following theorem.
\begin{theorem}\label{MAIN2}
		Let $n,p,q \geq1$ and $N\geq 2n$. Let $\phi:\mathbb{R}^+\to \mathbb{R}$ be a smooth function satisfies (\ref{e1}) with $0< m \leq 1$. Suppose $p, q $ satisfy  the conditions $1\leq q <\frac{n}{n-1},~ \frac{2}{p}+\frac{2n - 1}{q} = 2n-1,$ and let $\sigma=(2n(1-m)+m)(1-\frac{1}{q})$. Then 
		\begin{align}\label{OSinq2}
			\left\|\sum_{j\in J} n_{j }\left|e^{-i t \phi(\mathcal{L})}f_{j }\right|^{2}\right\|_{L^p ((-T_0,T_0), L^q(\mathbb{C}^n))} \leqslant C N^\sigma\left(\sum_{j \in J}\left|n_{j}\right|^{\beta}\right)^{1/\beta},
		\end{align}
		holds for all orthonormal system $(f_j)_j$ in $L^2(\mathbb{C}^n)$ with $supp~ \hat{f}_j \subset \{(\mu,\nu): \sqrt{2|\nu|+n }\leq N\}$,  and all sequence $ (n_{j })_j \subset \mathbb{C}$ with $\beta = \frac{2q}{q+1}$.
\end{theorem}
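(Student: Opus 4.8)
The plan is to obtain Theorem~\ref{MAIN2} from the Schatten estimate of Theorem~\ref{MAIN} via the orthonormal duality principle of Frank--Sabin \cite{frank1}, so that the entire analytic content resides in Theorem~\ref{MAIN} and what remains is a duality computation together with a matching of exponents. First I would rewrite the propagator through the extension operator. Expanding $f_j=\sum_{\mu,\nu}\hat f_j(\mu,\nu)\Phi_{\mu,\nu}$ gives $e^{-it\phi(\mathcal{L})}f_j=\sum_{\mu,\nu}\hat f_j(\mu,\nu)e^{-it\phi(2|\nu|+n)}\Phi_{\mu,\nu}$, so setting $g_j(\phi(2|\nu|+n),\mu,\nu)=\hat f_j(\mu,\nu)$ on $S_\phi$ and recalling the definition (\ref{az1}) of $d\Sigma_N$ yields $e^{-it\phi(\mathcal{L})}f_j=(2\pi)^{1/2}\mathcal{E}_{S_{\phi,N}}g_j$. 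Because $\chi_{[-1,1]}\le\psi\le\chi_{[-2,2]}$ forces $\psi\equiv1$ on $[-1,1]$ and $\mathrm{supp}\,\hat f_j\subset\{\sqrt{2|\nu|+n}\le N\}$, the weight $\psi(\sqrt{2|\nu|+n}/N)$ equals $1$ throughout the support of each $g_j$; hence by the Plancherel formula $\langle g_i,g_j\rangle_{L^2(S_\phi,d\Sigma_N)}=\langle f_i,f_j\rangle_{L^2(\mathbb{C}^n)}=\delta_{ij}$, so $(g_j)_j$ is orthonormal in $L^2(S_\phi,d\Sigma_N)$. It therefore suffices to bound $\big\|\sum_j n_j|\mathcal{E}_{S_{\phi,N}}g_j|^2\big\|_{L^p((-T_0,T_0),L^q(\mathbb{C}^n))}$ for orthonormal $(g_j)_j$.

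Next I would dualize. Testing the density against $V\in L^{p'}((-T_0,T_0),L^{q'}(\mathbb{C}^n))$ and writing $V=W\,\sgn(V)\,W$ with $W=|V|^{1/2}$, one obtains
\begin{align*}
\int V\sum_j n_j\,|\mathcal{E}_{S_{\phi,N}}g_j|^2=\sum_j n_j\,\langle A g_j,g_j\rangle=\operatorname{Tr}(AB),
\end{align*}
where $A=\mathcal{E}_{S_{\phi,N}}^*\,W\,\sgn(V)\,W\,\mathcal{E}_{S_{\phi,N}}$ on $L^2(S_\phi,d\Sigma_N)$ and $B=\sum_j n_j|g_j\rangle\langle g_j|$ has $\|B\|_{\mathcal{G}^\beta}=(\sum_j|n_j|^\beta)^{1/\beta}$. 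H\"older's inequality for Schatten classes gives $|\operatorname{Tr}(AB)|\le\|A\|_{\mathcal{G}^{\beta'}}\|B\|_{\mathcal{G}^\beta}$, and since $\sgn(V)$ is a unimodular multiplier, cyclicity of singular values yields $\|A\|_{\mathcal{G}^{\beta'}}=\|W\mathcal{E}_{S_{\phi,N}}\mathcal{E}_{S_{\phi,N}}^*W\|_{\mathcal{G}^{\beta'}}$.

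Finally I would apply Theorem~\ref{MAIN} with $W_1=W_2=|V|^{1/2}$ and Schatten index $\beta'$, writing $\tilde p,\tilde q$ for its exponents. From $\beta=\frac{2q}{q+1}$ one has $\beta'=\frac{2q}{q-1}$, and the choice $\tilde q=2q'=\frac{2q}{q-1}$, $\tilde p=2p'$ simultaneously makes $\beta'=\tilde q$ (the required Schatten index) and $\big\||V|^{1/2}\big\|_{L^{\tilde p}(L^{\tilde q})}^2=\|V\|_{L^{p'}(L^{q'})}$. One then checks that $\tilde q>2n$ is exactly $q<\frac{n}{n-1}$, that the scaling relation $\frac{2}{\tilde p}+\frac{2n-1}{\tilde q}=1$ reduces precisely to $\frac{2}{p}+\frac{2n-1}{q}=2n-1$, and that $\frac{2(2n(1-m)+m)}{\tilde q}$ equals $(2n(1-m)+m)(1-\frac1q)=\sigma$. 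Taking the supremum over $\|V\|_{L^{p'}(L^{q'})}\le1$ recovers the $L^p(L^q)$ norm of the density and yields (\ref{OSinq2}) with the factor $N^\sigma$.

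I expect the main obstacle to be bookkeeping rather than analytic: confirming that the band-limiting hypothesis makes the cutoff $\psi$ invisible so that orthonormality transfers exactly to $L^2(S_\phi,d\Sigma_N)$, and verifying that the four closing relations ($\beta'=\tilde q$, the two exponent identities, and the equality of the two expressions for $\sigma$) hold simultaneously. All of the dispersive and curvature analysis needed for the density bound is already packaged in Theorem~\ref{MAIN}.
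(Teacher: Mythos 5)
Your proposal is correct and follows essentially the same route as the paper: the paper likewise identifies $e^{-it\phi(\mathcal{L})}u$ with $\mathcal{E}_{S_{\phi,N}}$ acting on coefficients supported in the band $\{(\mu,\nu):\sqrt{2|\nu|+n}\le N\}$ (where $\psi\equiv 1$, so the cutoff is invisible) and then invokes Theorem \ref{MAIN} through the duality principle (Lemma \ref{CH3dual}), whose proof is precisely the trace/Schatten--H\"older/cyclicity computation you carry out by hand, and your exponent bookkeeping $\tilde q=2q'$, $\tilde p=2p'$, $\beta'=\tilde q$ matches the paper's implicit one. The only minor imprecision is that, with $\sgn(V)$ in the middle, ``cyclicity of singular values'' gives the inequality $\left\|\mathcal{E}_{S_{\phi,N}}^*W\sgn(V)W\mathcal{E}_{S_{\phi,N}}\right\|_{\mathcal{G}^{\beta'}}\le\left\|W\mathcal{E}_{S_{\phi,N}}\mathcal{E}_{S_{\phi,N}}^*W\right\|_{\mathcal{G}^{\beta'}}$ (cyclicity combined with $\|X\sgn(V)\|_{\mathcal{G}^{\beta'}}\le\|X\|_{\mathcal{G}^{\beta'}}$) rather than an equality, which is all that is needed.
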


Using the vector-valued version of the Littlewood–Paley inequality (see \cite{33}, Lemma 1), the frequency-localized estimate (\ref{OSinq2}) can be extended to a frequency-global estimate: 

\begin{theorem}\label{MAIN3}
		Let $n,p,q \geq1$ and $N\geq 2n$. Let $\phi:\mathbb{R}^+\to \mathbb{R}$ be a smooth function satisfies (\ref{e1}) with $0< m \leq 1$. Suppose $p, q $ satisfy  the conditions $1\leq q <\frac{n}{n-1},~ \frac{2}{p}+\frac{2n - 1}{q} = 2n-1,$ and let $s>\frac{2n(1-m)+m}{2}(1-\frac{1}{q})$. Then 
		\begin{align}\label{111}
			\left\|\sum_{j\in J} n_{j }\left|e^{-i t \phi(\mathcal{L})}f_{j }\right|^{2}\right\|_{L^p ((-T_0,T_0), L^q(\mathbb{C}^n))} \leqslant C \left(\sum_{j \in J}\left|n_{j}\right|^{\beta}\right)^{1/\beta},
		\end{align}
		holds for all orthonormal system $(f_j)_j$ in the Sobolev space $H^s({\mathcal{L}}) := \bigl\{ f \in L^2(\mathbb{C}^n) : \mathcal{L}^{\frac{s}{2}} f \in L^2(\mathbb{C}^n) \bigr\}$  and all sequence $ (n_{j })_j \subset \mathbb{C}$,  $\beta = \frac{2q}{q+1}$.
\end{theorem}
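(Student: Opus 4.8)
The plan is to deduce the frequency-global estimate \eqref{111} from the frequency-localized estimate \eqref{OSinq2} of Theorem \ref{MAIN2} by a dyadic Littlewood--Paley decomposition in the spectrum of $\mathcal{L}$, carried out at the level of \emph{density operators} so that the loss of orthonormality under spectral truncation can be absorbed into a Schatten norm. Write $u_j = e^{-it\phi(\mathcal{L})}f_j$ and $\rho(t,x) = \sum_j n_j|u_j(t,x)|^2$. Since replacing $n_j$ by $|n_j|$ only enlarges the left side, we may assume $n_j\ge 0$ and encode the system by the positive self-adjoint operator $\gamma = \sum_j n_j\langle\cdot,f_j\rangle f_j$; orthonormality of $(f_j)$ in $H^s(\mathcal{L})$ means precisely that $\tilde\gamma := \mathcal{L}^{s/2}\gamma\,\mathcal{L}^{s/2}=\sum_j n_j\langle\cdot,\mathcal{L}^{s/2}f_j\rangle\mathcal{L}^{s/2}f_j$ has orthonormal eigenfunctions, so $\|\tilde\gamma\|_{\mathcal{G}^\beta}=(\sum_j|n_j|^\beta)^{1/\beta}$ with $\beta=\tfrac{2q}{q+1}$. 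Let $P_k$ be the spectral projection of $\mathcal{L}$ onto the dyadic shell $\{2^{2k}\le 2|\nu|+n<2^{2k+2}\}$ (built smoothly from $\psi$), so that $\sqrt{2|\nu|+n}\sim 2^k$ on its range and $P_k$ commutes with $e^{-it\phi(\mathcal{L})}$.

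First I would apply the vector-valued Littlewood--Paley inequality (\cite{33}, Lemma 1) in the $x$-variable to the $\ell^2(j)$-valued function $(\sqrt{n_j}\,u_j)_j$: this controls the cross terms in $|u_j|^2=|\sum_k P_ku_j|^2$ and yields, for each fixed $t$,
\[
\big\|\rho(t,\cdot)\big\|_{L^q(\mathbb{C}^n)}\lesssim\Big\|\sum_k\rho_k(t,\cdot)\Big\|_{L^q(\mathbb{C}^n)},\qquad \rho_k:=\sum_j n_j|P_ku_j|^2,
\]
whence $\|\rho\|_{L^p_tL^q_x}\lesssim\sum_k\|\rho_k\|_{L^p_tL^q_x}$ by the triangle inequality in $L^p((-T_0,T_0),L^q)$. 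For each $k$ the operator $\gamma_k:=P_k\gamma P_k$ is self-adjoint with range inside $\{\sqrt{2|\nu|+n}\le 2^{k+1}\}$, and since $P_k$ commutes with the flow, $\rho_k=\rho_{\gamma_k(t)}$ is the spatial density of $e^{-it\phi(\mathcal{L})}\gamma_k e^{it\phi(\mathcal{L})}$. Diagonalizing $\gamma_k=\sum_l\lambda^{(k)}_l\langle\cdot,e^{(k)}_l\rangle e^{(k)}_l$ into its orthonormal eigensystem $(e^{(k)}_l)_l$ — which again lies in the range of $P_k$ — puts $\rho_k$ in exactly the form to which Theorem \ref{MAIN2} applies with $N=2^{k+1}$, giving $\|\rho_k\|_{L^p_tL^q_x}\lesssim 2^{k\sigma}\|(\lambda^{(k)}_l)_l\|_{\ell^\beta}=2^{k\sigma}\|\gamma_k\|_{\mathcal{G}^\beta}$, where $\sigma=(2n(1-m)+m)(1-\tfrac1q)$.

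It remains to extract a gain in $k$ from the Sobolev regularity. Writing $\gamma=\mathcal{L}^{-s/2}\tilde\gamma\,\mathcal{L}^{-s/2}$ and using the ideal property of Schatten norms together with $\|P_k\mathcal{L}^{-s/2}\|_{op}\lesssim 2^{-ks}$ on the shell, I obtain
\[
\|\gamma_k\|_{\mathcal{G}^\beta}=\big\|(P_k\mathcal{L}^{-s/2})\,\tilde\gamma\,(\mathcal{L}^{-s/2}P_k)\big\|_{\mathcal{G}^\beta}\le\|P_k\mathcal{L}^{-s/2}\|_{op}^2\,\|\tilde\gamma\|_{\mathcal{G}^\beta}\lesssim 2^{-2ks}\Big(\sum_j|n_j|^\beta\Big)^{1/\beta}.
\]
Summing over the dyadic high shells gives $\|\rho\|_{L^p_tL^q_x}\lesssim\big(\sum_k 2^{k(\sigma-2s)}\big)\big(\sum_j|n_j|^\beta\big)^{1/\beta}$, and the geometric series converges precisely because the hypothesis $s>\tfrac{2n(1-m)+m}{2}(1-\tfrac1q)=\sigma/2$ forces $\sigma-2s<0$. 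The finitely many low shells with $2^{k+1}<2n$ are grouped into a single frequency-localized block to which Theorem \ref{MAIN2} applies with $N=2n$, contributing $\lesssim(\sum_j|n_j|^\beta)^{1/\beta}$. This yields \eqref{111}.

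The main obstacle is that spectral truncation destroys orthonormality: the functions $P_kf_j$ are no longer orthonormal, so Theorem \ref{MAIN2} cannot be invoked for them directly. The device that resolves this is to pass to the density operator $\gamma$ and observe that $P_k\gamma P_k$ remains a self-adjoint $\mathcal{G}^\beta$ operator whose norm is controlled by that of $\gamma$ via the ideal property; re-diagonalizing restores an orthonormal system at each dyadic scale, while the Sobolev weight $\mathcal{L}^{-s}$ supplies the factor $2^{-2ks}$ needed to defeat the growth $2^{k\sigma}$ from the localized estimate. One should also verify that the vector-valued Littlewood--Paley inequality of \cite{33} is available for the spectral projections of the special Hermite operator in the relevant range $1\le q<\tfrac{n}{n-1}$, and note that boundedness of the time interval causes no difficulty since all estimates are uniform on $(-T_0,T_0)$.
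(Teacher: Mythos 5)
Your proof is correct, and at the structural level it mirrors the paper's: both decompose dyadically in the spectrum of $\mathcal{L}$ using the vector-valued Littlewood--Paley inequality of \cite{33}, apply the frequency-localized Theorem \ref{MAIN2} at scale $N\sim 2^{k}$, extract a gain $2^{-2ks}$ from the Sobolev weight, and sum a geometric series that converges precisely when $s>\sigma/2$. The genuine difference lies in the per-scale step, i.e.\ in how one copes with the fact that spectral truncation destroys orthonormality. The paper never truncates the functions: it ``rephrases'' Theorem \ref{MAIN2} as the smooth-cutoff estimate \eqref{qw541}, in which $\varPhi(h^{-1}\sqrt{\mathcal{L}})$ acts on a system orthonormal in $L^2$, and then commutes the Sobolev weight into the cutoff via $\mathcal{L}^{-\frac{\sigma}{4}-\varepsilon}\varPsi_\ell(\sqrt{\mathcal{L}})=2^{-\ell(\frac{\sigma}{2}+2\varepsilon)}\tilde{\varPsi}_\ell(\sqrt{\mathcal{L}})$ before applying \eqref{qw541} scale by scale; note that \eqref{qw541} is not a literal restatement of \eqref{OSinq2}, since the functions $\varPhi(h^{-1}\sqrt{\mathcal{L}})f_j$ are no longer orthonormal, so this step implicitly relies on the duality principle (Lemma \ref{CH3dual}) together with the Schatten bound of Theorem \ref{MAIN}. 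You instead work at the level of density operators: truncate $\gamma\mapsto P_k\gamma P_k$, bound $\|P_k\gamma P_k\|_{\mathcal{G}^{\beta}}\lesssim 2^{-2ks}\|\tilde{\gamma}\|_{\mathcal{G}^{\beta}}$ by the Schatten ideal property, and re-diagonalize so that Theorem \ref{MAIN2} applies verbatim to the orthonormal eigensystem of $P_k\gamma P_k$, which lies in the spectral shell. The two mechanisms are equivalent in content, but what the paper's route buys is brevity, while yours buys rigor at exactly the point the paper compresses into ``can be rephrased as''---your density-operator/re-diagonalization argument is in effect a proof of \eqref{qw541}. Both arguments rest on the same external ingredient, the vector-valued Littlewood--Paley inequality for smooth spectral multipliers of $\mathcal{L}$, which you rightly flag as requiring verification in the special Hermite setting (the paper cites \cite{33} for it without further comment).
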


As an application of above Theorems, we obtain Strichartz estimates for orthonormal systems corresponding to the wave, Klein–Gordon, and fractional Schr\"{o}dinger equations associated with the special Hermite operator (see Section \ref{sec4}).


Returning to Theorem \ref{main1}, it was shown in \cite{SG}, using a semi-classical argument based on coherent states, that the Schatten exponent $\beta=\frac{2q}{q+1}$ in (\ref{CH0A1}) is optimal. But, the validity of the estimate (\ref{CH0A1}) remains unknown at the endpoint $(q, \beta)= (\frac{2n+1}{2n-1}, \frac{2q}{q+1}),$ which we answer negatively in the following theorem.

\begin{theorem}\label{end}
	The estimate (\ref{CH0A1}) fails at the endpoint $(q, \beta)= (\frac{2n+1}{2n-1}, \frac{2q}{q+1})$.
\end{theorem}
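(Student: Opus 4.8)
The plan is to argue by contradiction, transferring the failure from the orthonormal side to the restriction--extension side via the duality already set up in the discussion of Problems 1 and 2. Suppose (\ref{CH0A1}) were to hold at the endpoint $(q,\beta)=\bigl(\tfrac{2n+1}{2n-1},\tfrac{2q}{q+1}\bigr)$. A direct computation with the conjugate exponents shows that this endpoint is the image, under the duality principle linking orthonormal Strichartz estimates with Schatten bounds for $T_S:=\mathcal{E}_S\mathcal{E}_S^*$ (the chain of equivalences producing (\ref{IN12})), of exactly the Stein--Tomas index $2n+1$: writing $\beta'=\tfrac{2q}{q-1}=2q'$ one gets that (\ref{CH0A1}) at the endpoint is equivalent to
\begin{align*}
\|W_1\,T_S\,W_2\|_{\mathcal{G}^{2n+1}(L^2((-\pi,\pi)\times\mathbb{C}^n))}\le C\,\|W_1\|_{L^{4n+2}((-\pi,\pi),L^{2n+1}(\mathbb{C}^n))}\,\|W_2\|_{L^{4n+2}((-\pi,\pi),L^{2n+1}(\mathbb{C}^n))}
\end{align*}
for the flat surface $S=\{\lambda=2|\nu|+n\}$. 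This is precisely the $q=2n+1$ endpoint that is \emph{excluded} from the \cite{SG} restriction--extension theorem, which requires $q>2n+1$ strictly. Hence it suffices to exhibit weights violating this critical $\mathcal{G}^{2n+1}$ bound.

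To analyze $T_S$, I would use its explicit kernel. Since $\mathcal{E}_S$ builds $\sum_{\mu,\nu}a(\mu,\nu)\Phi_{\mu,\nu}(z)e^{-it(2|\nu|+n)}$, the operator $T_S$ has kernel $K\bigl((t,z),(s,w)\bigr)=(2\pi)^{-1}\sum_{k\ge 0}e^{-i(t-s)(2k+n)}P_k(z,w)$, where $P_k$ is the special Hermite spectral projection onto the eigenvalue $2k+n$, given explicitly (via Thangavelu's Laguerre formula) by $P_k(z,w)=(2\pi)^{-n}L_k^{n-1}\bigl(\tfrac12|z-w|^2\bigr)e^{-\frac14|z-w|^2}e^{\frac{i}{2}\operatorname{Im}(z\cdot\bar w)}$. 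The structural feature I would exploit is that on eigenfunctions the densities are \emph{time independent}: for an orthonormal family $f_j=\Phi_{\mu_j,\nu_j}$ all carrying a single eigenvalue $2k+n$ one has $|e^{-it\mathcal{L}}f_j|^2=|\Phi_{\mu_j,\nu_j}|^2$, so the left-hand side of (\ref{CH0A1}) collapses to $(2\pi)^{1/p}\|\sum_j n_j|\Phi_{\mu_j,\nu_j}|^2\|_{L^q(\mathbb{C}^n)}$. This reduces the endpoint question to a purely spatial concentration estimate for sums of squared special Hermite functions, which is the natural stage on which the borderline phenomenon appears.

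The counterexample itself I would build as a concentrated (Knapp / coherent-state) family: orthonormal functions whose Fourier--special Hermite transforms are supported on well-separated lattice pieces of $S$ chosen so that the spatial densities pile up at the critical density dictated by the uncertainty principle, i.e. so that $W_1=W_2=W$ is a weight localized where the near-diagonal singularity of $K$ is strongest. The decay $P_k(z,w)\sim |z-w|^{-(2n-1)}$-type behavior after summation in $k$ against the time oscillation produces, for this weight, singular values of $W T_S W$ at the borderline rate $s_j\sim j^{-1/(2n+1)}$ (up to logarithms). Consequently $\|WT_SW\|_{\mathcal{G}^{2n+1}}^{2n+1}=\sum_j s_j^{2n+1}\sim\sum_j j^{-1}=\infty$, while $\|W\|_{L^{4n+2}_tL^{2n+1}_x}<\infty$; this logarithmic divergence contradicts the critical Schatten estimate and hence, by the equivalence of the first paragraph, (\ref{CH0A1}) fails at the endpoint. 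In parallel, the same family can be fed directly into the time-independent form of (\ref{CH0A1}) with $n_j$ equal to the singular values, so that the ratio $\|\sum_j n_j|\Phi_{\mu_j,\nu_j}|^2\|_{L^q}/\|(n_j)\|_{\ell^\beta}$ diverges; either route closes the argument.

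The main obstacle is the borderline computation in the third paragraph: establishing the precise threshold behavior $s_j\sim j^{-1/(2n+1)}$, equivalently the logarithmic (rather than convergent) divergence of the $\mathcal{G}^{2n+1}$ quasi-norm. This requires a sharp description of the local singularity of the summed propagator kernel $K$ near $\{t=s,\ z=w\}$ and of its interaction with the concentrated weight, which is exactly the mechanism that distinguishes the open case $q=2n+1$ from the established range $q>2n+1$; once the singular-value asymptotics are pinned down, the conclusion is immediate.
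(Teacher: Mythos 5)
Your opening reduction is correct and coincides with the paper's first step: at $(q,\beta)=\bigl(\tfrac{2n+1}{2n-1},\tfrac{2q}{q+1}\bigr)$ the duality principle turns (\ref{CH0A1}) into the critical Schatten bound $\|W_1\mathcal{E}_S\mathcal{E}_S^*W_2\|_{\mathcal{G}^{2n+1}}\lesssim\|W_1\|_{L^{4n+2}_tL^{2n+1}_z}\|W_2\|_{L^{4n+2}_tL^{2n+1}_z}$, which is the paper's dual estimate (\ref{HIO}) at $2q'=2n+1$. After that, however, there is a genuine gap: you never construct a counterexample. The sole content of the theorem is the divergence you defer to the end --- the singular-value asymptotics $s_j\sim j^{-1/(2n+1)}$ for $WT_SW$ are asserted with no computation, and your last paragraph concedes that establishing them is ``the main obstacle.'' As written, the proposal is a plan for a proof, not a proof.

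Moreover, the intermediate reduction you propose --- testing (\ref{CH0A1}) on orthonormal families of eigenfunctions in a \emph{single} eigenspace, so that the densities $|e^{-it\mathcal{L}}f_j|^2=|f_j|^2$ are time-independent --- cannot detect the endpoint failure. For such families the time integration is trivial and the dual object collapses to $W_1\mathcal{Q}_kW_2$ for one fixed $k$; since your Knapp/coherent-state weights are compactly supported, Theorem \ref{Thm3} of the paper (at $p=\tfrac{2(2n+1)}{2n+3}$, where $\alpha(p)=2n+1$ and $k^{2\varrho(p)}=k^{-1/(2n+1)}$) already shows that $\|W_1\chi_B\mathcal{Q}_k\chi_BW_2\|_{\mathcal{G}^{2n+1}}\lesssim_B k^{-1/(2n+1)}\|W_1\|_{L^{2n+1}}\|W_2\|_{L^{2n+1}}$, i.e.\ the endpoint bound holds for these configurations, with decay in $k$ to spare (equivalently, Theorem \ref{Thm2} at $p=\tfrac{2n+1}{2n-1}$, $\beta=\tfrac{2n+1}{2n}$). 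The failure is intrinsically a multi-eigenvalue, space--time resonance phenomenon, and this is exactly what the paper exploits: it chooses a genuinely time-dependent potential, uses the explicit propagator kernel (\ref{KU}) together with the change of variables in $\tan t$ to show that the symplectic-Fourier kernel of $B_V=\int_{-\pi}^{\pi}e^{it\mathcal{L}}Ve^{-it\mathcal{L}}\,dt$ equals $\widehat{V_1}(|q|^2-|p|^2,\,q-p)$ --- the same kernel as for the free Schr\"odinger flow --- and then imports the logarithmic divergence $\mathrm{Tr}\,(B_V^{2n+1})=+\infty$ from Proposition 2 of \cite{frank}. Without either this transference mechanism or an honest proof of your singular-value claim, the argument does not close.
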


The next part of the paper focuses on generalizing the restriction estimate for the special Hermite spectral projections on the context of trace ideals. 
For $k\in\mathbb{N}_0,$ let $\mathcal{Q}_k$ denote the spectral projection operator on the eigenspace that corresponds to the $k$th eigenvalue $2k + n$ (see Section \ref{sec2}).
The following restriction estimate for the special Hermite spectral projections is well-known in the literature.
\begin{theorem}
	Let $n \geq 1$ and $k \geq 1$. Then 
	\begin{equation}\label{A2}
		\|\mathcal{Q}_k f\|_{2} \leq C_p k^{\varrho(p)}\|f\|_p
	\end{equation}
	holds with the exponent $\varrho(p)$ is given by
	\begin{equation}\label{rho}
		\varrho(p) =
		\begin{cases}
			n(1/p - 1/2) - 1/2, & \text{if } 1 \leq p \leq \frac{2(2n+1)}{2n +3} , \\ \vspace{2pt}
			-\frac{1}{2}(1/p - 1/2),  & \text{if } \frac{2(2n+1)}{2n +3}  \leq p \leq 2,
			
		\end{cases}
	\end{equation}
	and the estimate {\rm (\ref{A2})} is optimal in the sense that the exponent $\varrho(p)$ cannot be improved.
\end{theorem}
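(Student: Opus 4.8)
The plan is to prove the upper bound by the $TT^*$ method combined with the classical asymptotics of Laguerre functions, and to establish optimality through two distinct families of extremizing eigenfunctions. First I would use that $\mathcal{Q}_k$ acts as a twisted convolution, $\mathcal{Q}_k f = (2\pi)^{-n} f\times\varphi_k$, with kernel $\varphi_k(z)=L_k^{n-1}(\tfrac12|z|^2)\,e^{-\frac14|z|^2}$. Since $\mathcal{Q}_k$ is a self-adjoint idempotent, the standard restriction duality gives $\|\mathcal{Q}_k\|_{L^p\to L^2}^2=\|\mathcal{Q}_k^*\mathcal{Q}_k\|_{L^p\to L^{p'}}=\|\mathcal{Q}_k\|_{L^p\to L^{p'}}$, so it suffices to prove $\|f\times\varphi_k\|_{p'}\le C k^{2\varrho(p)}\|f\|_p$.

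Two endpoints are immediate. At $p=2$, $\mathcal{Q}_k$ is an orthogonal projection, so its norm is $1=k^{\varrho(2)}$. At $p=1$, the twisted Young inequality $|f\times\varphi_k|\le |f|*|\varphi_k|$ gives $\|\mathcal{Q}_k\|_{L^1\to L^2}\le (2\pi)^{-n}\|\varphi_k\|_2$, and the $L^2$-normalization of Laguerre functions yields $\|\varphi_k\|_2^2\sim \Gamma(k+n)/k!\sim k^{n-1}$, i.e. the exponent $\varrho(1)=(n-1)/2$. The whole range then follows by interpolating these two with the single nontrivial estimate at the Stein--Tomas exponent $p_c=\frac{2(2n+1)}{2n+3}$ (observe that $2n$ is the real dimension of $\mathbb{C}^n$, so $p_c$ is exactly the Stein--Tomas exponent for $S^{2n-1}$), namely $\|\mathcal{Q}_k\|_{L^{p_c}\to L^2}\le C\, k^{-1/(2(2n+1))}$. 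One checks directly that this value matches both branches of \eqref{rho} at $p_c$; hence Riesz--Thorin interpolation between $p=1$ and $p=p_c$ produces the first branch and between $p=p_c$ and $p=2$ the second, since $\varrho(p)$ is affine in $1/p$ on each piece.

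The heart of the argument, and the main obstacle, is the critical estimate at $p_c$. Here I would insert the Askey--Wainger/Muckenhoupt pointwise bounds for $L_k^{n-1}(\tfrac12|z|^2)$, splitting into the four natural regions (near $0$, the bulk $1\lesssim|z|^2\lesssim k$, the turning-point zone $|z|^2\sim 4k$, and the region of exponential decay), and control $\|f\times\varphi_k\|_{p'}$ region by region. The oscillatory decay of $\varphi_k$ in the bulk is what supplies the curvature gain, and capturing the sharp power $k^{-1/(2(2n+1))}$ requires an analytic-interpolation (Stein--Tomas) argument for the associated family of operators, precisely mirroring the restriction theorem on the sphere; this is the technically delicate step.

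Finally, for optimality I would exhibit extremizers saturating each branch. Using that $\mathcal{Q}_k f=f$ for $f$ in the $k$-th eigenspace, the ratio $\|\mathcal{Q}_k f\|_2/\|f\|_p=\|f\|_2/\|f\|_p$ is governed by $\|\Phi_{\mu,\nu}\|_p^{-1}$. For the first branch I would take the zonal-type eigenfunction concentrated at scale $k^{-1/2}$ about the origin, where $\varphi_k(0)=L_k^{n-1}(0)\sim k^{n-1}$ is largest; for the second branch a highest-weight/Gaussian-beam-type eigenfunction concentrated in the bulk $|z|\sim\sqrt{k}$ --- the analogues of the zonal and Knapp examples for spherical harmonics --- verifying in each case that $\|f\|_2/\|f\|_p\sim k^{\varrho(p)}$, which shows that $\varrho(p)$ cannot be lowered.
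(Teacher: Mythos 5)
Your skeleton is the standard one and is correctly assembled: the $TT^*$/duality reduction, the two easy endpoints (the projection bound at $p=2$, and at $p=1$ the twisted Young inequality together with $\|\varphi_k\|_2\sim k^{(n-1)/2}$), the identification of the critical exponent $p_c=\tfrac{2(2n+1)}{2n+3}$ with the matching value $k^{-1/(2(2n+1))}$ on both branches, and Riesz--Thorin interpolation on each segment (legitimate since $\varrho$ is affine in $1/p$ there). Note, for comparison, that the paper does not prove this theorem at all: it quotes it as known, with the attribution Thangavelu $\to$ Ratnakumar--Rawat--Thangavelu $\to$ Stempak--Zienkiewicz $\to$ Koch--Ricci, so you are being compared against the published proofs rather than an argument in the text.

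The genuine gap is that your architecture funnels the entire theorem into the single step you defer, and the method you propose for that step is known not to reach it. The bound $\|\mathcal{Q}_k\|_{L^{p_c}\to L^2}\lesssim k^{-1/(2(2n+1))}$ is exactly the endpoint that remained open until Koch--Ricci: the strategy you describe (Askey--Wainger/Muckenhoupt pointwise bounds for $L_k^{n-1}$ in the four regions, fed into a Stein--Tomas-style analytic interpolation) is precisely the approach of Thangavelu and of Ratnakumar--Rawat--Thangavelu, and it only yields $1\le p<\tfrac{2(3n+1)}{3n+4}$; even the sharper analysis of Stempak--Zienkiewicz misses $p=p_c$ itself. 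The obstruction is the turning-point region $|z|^2\sim 4k$, where $\varphi_k$ has Airy-type behavior rather than the oscillatory decay that drives the sphere-restriction analogy, and the region-by-region summation at $p=p_c$ loses at least a power $k^{\varepsilon}$ (the operator is not honestly a mirror of restriction to $S^{2n-1}$ there). Since your interpolation scheme anchors both branches at $p_c$, this unproven anchor takes the whole middle range down with it: as written you obtain the sharp exponent only on the subinterval covered by the RRT-type argument, not the full theorem. Separately, the optimality half is only a plan: the radial eigenfunction $\varphi_k$ and a highest-weight-type eigenfunction are indeed the right pair of extremizers, but the claimed asymptotics $\|f\|_2/\|f\|_p\sim k^{\varrho(p)}$ require the Laguerre $L^p$-norm computations (the bulk versus origin versus turning-point contributions) that you have not carried out.
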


The estimate (\ref{A2}) 
was first established by Thangavelu \cite{Thanga, Than1} for the range 
$1 \leq p \leq \frac{2n}{n + 1}.$ 
Subsequently, Ratnakumar, Rawat, and Thangavelu \cite{Ratna1} 
extended this range to 
$1 \leq p < \frac{2(3n + 1)}{3n + 4}$. 
Later, Stempak and Zienkiewicz \cite{Stem} proved (\ref{A2}) 
for all $1 \leq p \leq 2$ except for  
$p = \frac{2(2n + 1)}{2n + 3}$. 
Finally, Koch and Ricci \cite{Koch}, settled the endpoint case 
$p = \frac{2(2n + 1)}{2n + 3}$, and showed that the estimate (\ref{A2}) is optimal. A local version of this endpoint estimate was obtained earlier by Thangavelu \cite{Than3}.

\vspace{.2cm}

Using duality argument, one can show that (\ref{A2}) is equivalent to 
\begin{equation}\label{1A2}
	\|\mathcal{Q}_k f\|_{p'} \leq C k^{2\varrho(p)}\|f\|_p,
\end{equation}
where $\frac{1}{p} + \frac{1}{p’} = 1$. By H\"older's inequality (\ref{1A2}) holds if and only if for any $W_1, W_2 \in {L^{2p/(2-p)}(\mathbb{C}^n)}$, the operator $W_1 \mathcal{Q}_k W_2$ is bounded on $L^2(\mathbb{C}^n)$ with the estimate 
\begin{equation}\label{2A2}
	\|W_1\mathcal{Q}_k W_2 \|_{L^2(\mathbb{C}^n) \rightarrow L^2(\mathbb{C}^n)} \leq C k^{2\varrho(p)}\| W_1\|_{L^{2p/(2-p)}(\mathbb{C}^n)} \|W_2\|_{L^{2p/(2-p)}(\mathbb{C}^n)},
\end{equation}
with $C>0$ independent of $W_1, W_2$.
\\

We upgrade the restriction estimate (\ref{2A2}) in the context of Schatten speace  $\mathcal{G}^\alpha(L^2(\mathbb{C}^n))$, as stated in the following theorem.

\begin{theorem}\label{Thm1}
	Let $n \geq 1$ and $k \geq 1$. Then for any $1 \leq p < \frac{2(3n + 1)}{3n + 4}$, there exists $C > 0$ such that for all $W_1, W_2 \in {L^{2p/(2-p)}(\mathbb{C}^n)}$, we have 
	\begin{align}\label{Q1}
		\left\|W_1 \mathcal{Q}_k {W_2}\right\|_{\mathcal{G}^{\frac{(3n -2)p}{6n - 1 - (3n+1)p}}\left(L^{2}\left( \mathbb{C}^{n} \right)\right)} \leq Ck^{2\varrho(p)} \| W_1\|_{L^{2p/(2-p)}(\mathbb{C}^n)} \|W_2\|_{L^{2p/(2-p)}(\mathbb{C}^n)},
	\end{align}
	where $\varrho(p)$ is defined in (\ref{rho}). Moreover, the exponent $\varrho(p)$  in (\ref{Q1}) is optimal.
\end{theorem}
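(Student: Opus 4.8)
The plan is to upgrade the scalar restriction bound (\ref{2A2}) to the Schatten scale by the Frank--Sabin complex-interpolation scheme, using that $\mathcal{Q}_k$ is a twisted-convolution (hence essentially a Laguerre-function) operator. First I would record the kernel: $\mathcal{Q}_k$ acts by twisted convolution with the Laguerre function $\varphi_k(z)=L_k^{n-1}(\tfrac12|z|^2)e^{-\frac14|z|^2}$, so the integral kernel of $W_1\mathcal{Q}_kW_2$ is $W_1(z)\,(2\pi)^{-n}\varphi_k(z-w)e^{\frac{i}{2}\Im(z\cdot\bar w)}\,W_2(w)$, whose modulus depends on $z-w$ only. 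As in the reduction from (\ref{1A2}) to (\ref{2A2}), the assertion (\ref{Q1}) is exactly the statement that this sandwiched operator lies in $\mathcal{G}^\alpha$ with the stated norm bound.

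Next I would establish the three endpoint estimates feeding the interpolation. (a) \emph{Trace class at $p=1$}: writing $\mathcal{Q}_k=\mathcal{Q}_k^2$ and using H\"older for Schatten norms, $\|W_1\mathcal{Q}_kW_2\|_{\mathcal{G}^1}\le\|W_1\mathcal{Q}_k\|_{\mathcal{G}^2}\|\mathcal{Q}_kW_2\|_{\mathcal{G}^2}$; since $\|W_1\mathcal{Q}_k\|_{\mathcal{G}^2}^2=\int|W_1(z)|^2\mathcal{Q}_k(z,z)\,dz$ and the diagonal $\mathcal{Q}_k(z,z)=(2\pi)^{-n}L_k^{n-1}(0)\sim k^{n-1}$ is constant in $z$, this is $\lesssim k^{n-1}\|W_1\|_2\|W_2\|_2$, i.e. $\alpha=1$ with the sharp power $k^{2\varrho(1)}$. (b) \emph{Hilbert--Schmidt}: $\|W_1\mathcal{Q}_kW_2\|_{\mathcal{G}^2}^2=(2\pi)^{-2n}\int\!\!\int|W_1(z)|^2|\varphi_k(z-w)|^2|W_2(w)|^2\,dz\,dw$, and Young's inequality bounds the right-hand side by $\|\varphi_k\|_{L^s(\mathbb{C}^n)}^2\|W_1\|_r^2\|W_2\|_r^2$ with $\frac1s=1-\frac2r$; the precise $k$-dependence then comes from the sharp $L^s$-asymptotics of the Laguerre functions $\varphi_k$ (bulk, Airy turning-point, and origin regimes). (c) \emph{Operator norm}: the $\mathcal{G}^\infty$ bound is exactly the scalar special-Hermite restriction estimate (\ref{2A2}) of Ratnakumar--Rawat--Thangavelu, which holds precisely on $1\le p<\frac{2(3n+1)}{3n+4}$ with the sharp power $k^{2\varrho(p)}$; this oscillatory input is what forces the endpoint range in the theorem.

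With these in hand I would run Stein interpolation for the analytic family $z\mapsto |W_1|^{a(z)}\sigma_1\,\mathcal{Q}_k\,\sigma_2|W_2|^{a(z)}$, keeping the phases $\sigma_i=W_i/|W_i|$ fixed and letting the moduli carry a complex power $a(z)$ affine in $z$ with $a(\theta)=1$, valued in the Schatten classes and combined with $[\mathcal{G}^{\alpha_0},\mathcal{G}^{\alpha_1}]_\theta=\mathcal{G}^{\alpha_\theta}$. Interpolating the trace-class and Hilbert--Schmidt endpoints covers the subrange $1\le\alpha\le2$, while interpolating the Hilbert--Schmidt and operator-norm endpoints covers $\alpha\ge2$. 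In each case the three bookkeeping relations --- for the Schatten exponent $\tfrac1\alpha$, for the weight exponent $\tfrac1r=\tfrac1p-\tfrac12$, and for the exponent of $k$ --- must be solved simultaneously; the point is to verify that they are compatible exactly along the line $\frac1\alpha=\frac{6n-1}{3n-2}\,\frac1p-\frac{3n+1}{3n-2}$, equivalently $\alpha=\frac{(3n-2)p}{6n-1-(3n+1)p}$, and that the interpolated power of $k$ collapses to $k^{2\varrho(p)}$.

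Finally, optimality of $\varrho(p)$ is immediate from the construction: since $\|\cdot\|_{\mathcal{G}^\infty}\le\|\cdot\|_{\mathcal{G}^\alpha}$, the inequality (\ref{Q1}) implies the operator-norm bound (\ref{2A2}) with the same power of $k$, and the power in (\ref{2A2})/(\ref{A2}) is known to be optimal (Koch--Ricci); hence $\varrho(p)$ cannot be lowered in (\ref{Q1}) either. I expect the main obstacle to be the matching of the $k$-power: the sharp Laguerre $L^s$-asymptotics have to be computed so that their bulk/turning-point transition aligns with the Ratnakumar--Rawat--Thangavelu exponent, and, in the upper part of the range, the Hilbert--Schmidt contribution alone yields a strictly worse power than $k^{2\varrho(p)}$, so the sharp decay can be retained only by feeding the oscillatory estimate (\ref{2A2}) into the interpolation and choosing the interpolation data so as not to degrade it.
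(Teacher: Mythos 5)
Your first interpolation leg (trace class at $p=1$ against Hilbert--Schmidt) is correct and genuinely different from the paper's argument: the paper never uses a Hilbert--Schmidt endpoint with weights in $L^r$, $r>2$, but instead feeds the Ratnakumar--Rawat--Thangavelu analytic family $G_k^\alpha f=f\times\psi_k^{\alpha+n}$ into the Frank--Sabin Schatten interpolation result (Proposition \ref{F}). Your route does close on the subrange $1\le p\le\frac{2(6n-1)}{9n}$ (Schatten exponents $1\le\alpha\le2$), but only after the Laguerre computation you deferred is actually carried out: with $\frac1s=1-\frac2r$ and $r=\frac{2p}{2-p}$, the exponent line of (\ref{Q1}) forces $s=\frac{6n-1}{3n-2}$ at the $\mathcal{G}^2$ point, and one must verify $\|\varphi_k\|_{L^s(\mathbb{C}^n)}\lesssim k^{\,n-1-n/s}$ there. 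This holds because the concentration regime $|z|\lesssim k^{-1/2}$, where $\varphi_k\sim k^{n-1}$, dominates the bulk exactly when $s>\frac{4n}{2n-1}$, and $\frac{6n-1}{3n-2}>\frac{4n}{2n-1}$ by the razor-thin margin $(6n-1)(2n-1)-4n(3n-2)=1$. With that, both of your endpoints sit exactly on the claimed exponent line with the sharp power of $k$, and the range $\alpha\in[1,2]$ follows.

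The second leg, Hilbert--Schmidt against the operator-norm bound (\ref{2A2}), has a genuine gap, and your closing diagnosis inverts where the loss sits. In the $(1/p,1/\alpha)$-plane the exponents of (\ref{Q1}) lie on the line $\frac1\alpha=\frac{6n-1}{3n-2}\frac1p-\frac{3n+1}{3n-2}$, which meets $\frac1\alpha=0$ at $p_\infty=\frac{6n-1}{3n+1}$; for your interpolation to stay on this line, the $\mathcal{G}^\infty$ endpoint must be taken at exactly $p_\infty$. But $p_\infty>\frac{2(2n+1)}{2n+3}$, i.e.\ it lies past the threshold in (\ref{rho}) where the branch $\varrho(p)=n(1/p-1/2)-1/2$ ends, so the scalar power available at $p_\infty$ is $k^{-\frac{3}{2(6n-1)}}$, strictly weaker than the value $k^{-\frac{3n-1}{6n-1}}$ the line requires --- and by the optimality part of (\ref{A2}) (Koch--Ricci) this cannot be improved, so the obstruction is intrinsic, not a deficiency of the known scalar estimates. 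Interpolating with that endpoint therefore loses a positive power of $k$ for every $\alpha>2$; placing the $\mathcal{G}^\infty$ endpoint instead at some $p\le\frac{2(2n+1)}{2n+3}$ (where the scalar power is sharp) puts it off the line, so the resulting pairs $(p,\alpha)$ and powers of $k$ are no longer those of Theorem \ref{Thm1}. Contrary to your last paragraph, the Hilbert--Schmidt endpoint is the sharp one and the operator-norm endpoint is the lossy one. A convex-hull bookkeeping shows the defect is unfixable within your scheme for $n\ge2$: the sharp HS points ($\tfrac1p\ge\tfrac{6n+1}{8n}$) and sharp scalar points ($\tfrac1p\ge\tfrac{2n+3}{2(2n+1)}$) span a region whose boundary crosses the exponent line at $\frac1\alpha=\frac{2n(6n-5)}{24n^2-18n+1}$, well above the line's endpoint $\frac{3}{2(3n+1)}$ (for $n=2$ this caps you at $\alpha\le\frac{61}{28}\approx 2.18$, against the claimed $\alpha<\frac{14}{3}$); only for $n=1$ does the claimed range stop early enough to squeeze through. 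This is precisely why the paper works with the analytic family $G_k^\alpha$: Proposition \ref{F} yields the genuinely Schatten-class bounds (\ref{P1}), $\|W_1\mathcal{Q}_kW_2\|_{\mathcal{G}^{2\lambda_0}}\lesssim k^{n/\lambda_0-1}\|W_1\|_{L^{2\lambda_0}}\|W_2\|_{L^{2\lambda_0}}$ for all $\lambda_0<n+\frac13$, i.e.\ sharp-power information at Schatten exponents up to $\frac{2(3n+1)}{3}$, which cannot be manufactured from trace, Hilbert--Schmidt and operator-norm data alone. Your optimality argument (via $\|\cdot\|_{\mathcal{G}^\infty}\le\|\cdot\|_{\mathcal{G}^\alpha}$ and Koch--Ricci) is fine and is what the paper intends.
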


By applying the duality principle (see Lemma 3 in \cite{frank1}) to (\ref{Q1}),  
 we immediately deduce the following result for orthonormal systems.
\begin{theorem}\label{Thm2.0}
	Let $n \geq 1$, $k \geq 1$, and $\alpha \geq 1$. Suppose $\frac{3n+1}{3n-2} <  p \leq \infty$,	then for any orthonormal system $\left(f_{j}\right)_{j \in J}$
	in  $L^2(\mathbb{C}^n)$ and any sequence $\left(n_{j}\right)_{j \in J} \subset \mathbb{C}$,  the estimate
	\begin{align}\label{CH1}
		\left\|\sum_{j \in J} n_{j} \left| \mathcal{Q}_k f_{j}\right|^{2}\right\|_{L^{p}(\mathbb{C}^{n})} \leq C k^{n(1-1/p) -1} \left(\sum_{j \in J}\left|n_{j}\right|^{\beta}\right)^{1/\beta},
		\end{align}
		where $C > 0$, with $\beta = \frac{2p(3n-2)}{6n-1}$. Moreover, the exponent $n(1-1/p) -1$  in (\ref{CH1}) is optimal.
\end{theorem}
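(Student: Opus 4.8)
The plan is to obtain (\ref{CH1}) as the precise dual reformulation of the Schatten bound (\ref{Q1}) of Theorem \ref{Thm1}, via the duality principle of Frank and Sabin (Lemma 3 in \cite{frank1}). Since $\mathcal{Q}_k$ is the orthogonal projection onto the $k$-th eigenspace, it is self-adjoint and idempotent, so $\mathcal{Q}_k \mathcal{Q}_k^* = \mathcal{Q}_k$; taking $A = \mathcal{Q}_k$, the operator $W_1 \mathcal{Q}_k W_2$ appearing in (\ref{Q1}) is exactly of the form $W_1 A A^* W_2$ to which the duality principle applies, and $A^* f_j = \mathcal{Q}_k f_j$ produces the density $\sum_j n_j |\mathcal{Q}_k f_j|^2$ on the left of (\ref{CH1}). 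In this way no new analytic estimate is needed: the entire content of the theorem is the correct translation of exponents across the duality.

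First I would fix the bookkeeping between the two exponent regimes. Write $r$ for the exponent of Theorem \ref{Thm1}, ranging over $1 \le r < \frac{2(3n+1)}{3n+4}$, so that the weights satisfy $W_1, W_2 \in L^{2r/(2-r)}(\mathbb{C}^n)$. The duality principle identifies the target space of (\ref{CH1}) as $L^{p}$ with $p$ determined by $2p = r'$ (equivalently $L^{p}=L^{\rho'}$ with $2\rho = 2r/(2-r)$), the exponent $\beta$ as the conjugate of the Schatten index in (\ref{Q1}), and leaves the constant unchanged. A direct computation then gives the three required matchings: the range $1 \le r < \frac{2(3n+1)}{3n+4}$ corresponds to $\frac{3n+1}{3n-2} < p \le \infty$; the conjugate of $\frac{(3n-2)r}{6n-1-(3n+1)r}$ equals $\frac{(3n-2)r}{(6n-1)(r-1)} = \frac{2p(3n-2)}{6n-1} = \beta$; and, since $r < \frac{2(3n+1)}{3n+4} < \frac{2(2n+1)}{2n+3}$ places us in the first branch of (\ref{rho}) where $\varrho(r) = n(1/r - 1/2) - 1/2$, the constant transforms as $k^{2\varrho(r)} = k^{2n/r - n - 1} = k^{n(1-1/p)-1}$. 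One also checks that the Schatten index is $\ge 1$ (which reduces to $r \ge 1$) and that its denominator stays positive throughout the range (since $\frac{2(3n+1)}{3n+4} < \frac{6n-1}{3n+1}$), so the hypothesis $\alpha \ge 1$ is automatic and the application of the duality lemma is legitimate.

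It remains to verify optimality of the exponent $n(1-1/p)-1$. Specializing (\ref{CH1}) to a single orthonormal function $f$ with $n_1 = 1$ yields $\|\mathcal{Q}_k f\|_{L^{2p}}^2 \le C k^{n(1-1/p)-1}\|f\|_2^2$, that is $\|\mathcal{Q}_k\|_{L^2 \to L^{2p}} \lesssim k^{(n(1-1/p)-1)/2}$. Since $2p = r'$ and $\mathcal{Q}_k$ is self-adjoint, this is the dual of the sharp single-function bound $\|\mathcal{Q}_k\|_{L^r \to L^2} \sim k^{\varrho(r)}$ furnished by (\ref{A2}); the sharpness of $\varrho(r)$ on this range, established in the cited works, forces $k^{n(1-1/p)-1}$ to be the best possible power, so the exponent cannot be lowered.

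The substantive analytic input is entirely contained in Theorem \ref{Thm1}, and the only delicate point here is the exponent bookkeeping: confirming simultaneously that the target Lebesgue exponent, the Schatten/$\beta$ conjugacy, and the power of $k$ all transform consistently under the Frank-Sabin duality, and that the admissibility hypotheses of the duality lemma hold on the full stated range. I do not anticipate any genuine obstruction beyond this careful matching.
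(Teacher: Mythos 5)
Your proposal is correct and follows exactly the paper's route: the paper obtains Theorem \ref{Thm2.0} precisely by applying the Frank--Sabin duality principle (Lemma 3 of \cite{frank1}) to the Schatten bound (\ref{Q1}) of Theorem \ref{Thm1}, and notes optimality via the reduction to the single-function case, which is the dual of the sharp estimate (\ref{A2}). Your exponent bookkeeping (range of $p$, conjugacy giving $\beta = \frac{2p(3n-2)}{6n-1}$, and $k^{2\varrho(r)} = k^{n(1-1/p)-1}$) checks out and simply makes explicit what the paper leaves implicit.
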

The estimate (\ref{CH1}) reduces to (the dual of) (\ref{A2}) when the orthonormal system is reduced to one function (and the corresponding coefficient $n = 1$). We expect the Schatten exponent in (\ref{Q1}) can be improved to  $\frac{(2n-1)p}{4n-(2n+1)p}$. However, we obtain such an estimate only locally, in the following theorem.

\begin{theorem}\label{Thm3}
	Let $B$ be a fixed compact subset of $\mathbb{C}^n$. Let $n \geq 1$, $k \geq 1$ and $1\leq p \leq 2$. Then there exists $C_B > 0$ depends only on $B$ for all $W_1, W_2 \in {L^{2p/(2-p)}(\mathbb{C}^n)}$, we have the estimate
	\begin{align}\label{Q2}
		\left\|W_1 \chi_B \mathcal{Q}_k \chi_B {W_2}\right\|_{\mathcal{G}^{\alpha(p)}\left(L^{2}\left( \mathbb{C}^{n} \right)\right)} \leq C_B k^{2\varrho(p)} \| W_1\|_{L^{2p/(2-p)}(\mathbb{C}^n)} \|W_2\|_{L^{2p/(2-p)}(\mathbb{C}^n)},
	\end{align}
		where $\varrho(p)$ is defined in (\ref{rho}) and  the exponent $\alpha(p)$ is given by 
	\begin{equation}
			\alpha(p) =
		\begin{cases}
			\frac{(2n-1)p}{4n-(2n+1)p}, & \text{if } 1 \leq p \leq \frac{2(2n+1)}{2n +3} , \\ \vspace{4pt}
			\hspace{2pt}\frac{2p}{2-p},  & \text{if } \frac{2(2n+1)}{2n +3}  \leq p \leq 2.
		\end{cases}
	\end{equation}
   Moreover, the exponent $\varrho(p)$  in (\ref{Q2}) is optimal.
\end{theorem}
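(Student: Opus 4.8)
The plan is to reduce the full range $1\le p\le 2$ to three endpoint estimates for the bilinear object $W_1\chi_B\mathcal{Q}_k\chi_B W_2$ and then recover the two branches of $\alpha(p)$ by complex (Stein) interpolation in the Schatten scale. Writing $r=\frac{2p}{2-p}$ for the exponent of the weight spaces, the three endpoints are: (A) at $p=1$ (so $r=2$) a trace–class bound $\|W_1\chi_B\mathcal{Q}_k\chi_B W_2\|_{\mathcal{G}^1}\le Ck^{n-1}\|W_1\|_{L^2}\|W_2\|_{L^2}$; (B) at the Stein–Tomas exponent $p_0=\frac{2(2n+1)}{2n+3}$ (for which a short computation gives $r=2n+1$) a bound $\|W_1\chi_B\mathcal{Q}_k\chi_B W_2\|_{\mathcal{G}^{2n+1}}\le C_B\,k^{2\varrho(p_0)}\|W_1\|_{L^{2n+1}}\|W_2\|_{L^{2n+1}}$; and (C) at $p=2$ (so $r=\infty$) the operator–norm bound $\|W_1\chi_B\mathcal{Q}_k\chi_B W_2\|_{\mathcal{G}^\infty}\le\|W_1\|_{L^\infty}\|W_2\|_{L^\infty}$. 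Using the relations $\frac{1}{p}=\frac{1}{r}+\frac{1}{2}$ and $\frac{1}{\alpha}=\frac{1-\theta}{\alpha_0}+\frac{\theta}{\alpha_1}$, one checks directly that interpolating (A) with (B) reproduces exactly the first branch $\alpha(p)=\frac{(2n-1)p}{4n-(2n+1)p}$ together with the power $k^{2\varrho(p)}$, while interpolating (B) with (C) reproduces the second branch $\alpha(p)=\frac{2p}{2-p}$ with the power $k^{2\varrho(p)}$; thus these three estimates are precisely what is required.

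Two of the endpoints are routine. For (C) I use that $\mathcal{Q}_k$ is an orthogonal projection, so $\|W_1\chi_B\mathcal{Q}_k\chi_B W_2\|_{\mathcal{G}^\infty}\le\|W_1\|_{L^\infty}\|W_2\|_{L^\infty}\|\mathcal{Q}_k\|_{L^2\to L^2}=\|W_1\|_{L^\infty}\|W_2\|_{L^\infty}$, and $2\varrho(2)=0$. For (A) I factor $\mathcal{Q}_k=\mathcal{Q}_k\mathcal{Q}_k$ and use $\|W_1\chi_B\mathcal{Q}_k\chi_B W_2\|_{\mathcal{G}^1}\le\|W_1\chi_B\mathcal{Q}_k\|_{\mathcal{G}^2}\,\|\mathcal{Q}_k\chi_B W_2\|_{\mathcal{G}^2}$; since the kernel of $\mathcal{Q}_k$ is a twisted convolution whose diagonal is the constant $\mathcal{Q}_k(z,z)=(2\pi)^{-n}L_k^{n-1}(0)\sim k^{n-1}$, one gets $\|W_1\chi_B\mathcal{Q}_k\|_{\mathcal{G}^2}^2=\int_B|W_1(z)|^2\mathcal{Q}_k(z,z)\,dz\le Ck^{n-1}\|W_1\|_{L^2}^2$ (and likewise for $W_2$), which yields the claimed $k^{n-1}=k^{2\varrho(1)}$.

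The heart of the matter is endpoint (B), a localized Stein–Tomas estimate in Schatten spaces for $\mathcal{Q}_k$. I would prove it by Stein's analytic interpolation applied to an analytic family of operators built from the Bochner–Riesz means $\big(1-\tfrac{2j+n}{2k+n}\big)_{+}^{z}$ of $\mathcal{L}$, arranged so that a suitable boundary value of the family returns a constant multiple of $\mathcal{Q}_k$. On one vertical line the family has an integrable kernel and yields a Hilbert–Schmidt bound; this is exactly where the compactness of $B$ is essential, since Hölder's inequality $\|W\chi_B\|_{L^2}\le|B|^{\frac{1}{2}-\frac{1}{2n+1}}\|W\|_{L^{2n+1}}$ converts the available $L^{2n+1}$ control of the weights into the $L^2$ control that the Hilbert–Schmidt endpoint demands. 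On the other vertical line I would invoke the operator–norm restriction estimate (\ref{2A2}) at the Stein–Tomas exponent — equivalently, the local endpoint spectral–projection estimate of Thangavelu \cite{Than3} — to handle the singular member of the family. Stein interpolation between the two lines then produces the $\mathcal{G}^{2n+1}$ bound at $z=0$. I expect this to be the main obstacle: one must design the analytic family so that both boundary bounds hold with the correct uniform dependence on $k$ and on $B$, and the localization (finiteness of $|B|$) is precisely the ingredient unavailable globally — its absence is what forces the weaker Ratnakumar–Rawat–Thangavelu range and the larger Schatten exponent in the global Theorem \ref{Thm1}.

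Finally, the two interpolation computations above assemble endpoints (A)–(C) into the full statement on $1\le p\le 2$. The optimality of $\varrho(p)$ in (\ref{Q2}) follows from the optimality of the single–function projection estimate (\ref{A2}): specializing the weight configuration to rank one (equivalently, reducing the orthonormal system to a single function) and bounding the operator norm by the $\mathcal{G}^{\alpha(p)}$ norm recovers (\ref{2A2}), whose power $k^{2\varrho(p)}$ is sharp by Koch–Ricci \cite{Koch}; hence $\varrho(p)$ cannot be improved.
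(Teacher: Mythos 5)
Your overall architecture is the same as the paper's: three endpoint bounds --- (A) a trace-class bound at $p=1$ via Hilbert--Schmidt factorization, (B) a localized Stein--Tomas-type $\mathcal{G}^{2n+1}$ bound at $p_0=\tfrac{2(2n+1)}{2n+3}$, (C) the trivial $\mathcal{G}^\infty$ bound at $p=2$ --- followed by pairwise complex interpolation, and your numerology (both branches of $\alpha(p)$ and the power $k^{2\varrho(p)}$) checks out and coincides with the paper's scheme, as do endpoints (A) and (C). However, there is a genuine gap at the heart of the argument, endpoint (B). The analytic family you propose, built from the Bochner--Riesz means $\bigl(1-\tfrac{2j+n}{2k+n}\bigr)_{+}^{z}$ of $\mathcal{L}$, has no member equal to (a multiple of) $\mathcal{Q}_k$: at $z=0$ it produces the full spectral sum $\sum_{j\le k}\mathcal{Q}_j$, there is no admissible value of $z$ that isolates the $k$-th projection, and recovering $\mathcal{Q}_k$ by differencing two such families destroys the single-analytic-family structure that Stein interpolation requires. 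The paper's family is different in an essential way: it analytically continues in the \emph{Laguerre type parameter}, $G_k^{\alpha}f=(\chi_B f)\times\psi_k^{\alpha+n}$ with $\psi_k^{\alpha}=\tfrac{\Gamma(k+1)\Gamma(\alpha+1)}{\Gamma(k+\alpha+1)}L_k^{\alpha}\bigl(\tfrac12|z|^2\bigr)e^{-\tfrac14|z|^2}$ (the family of \cite{Ratna1}). This is a Ces\`aro-type mean in the spectral index, and Ces\`aro order $-1$ picks out exactly the $k$-th term: at $\alpha=-1$ one has $\mathcal{Q}_k\chi_B f = c_k\, G_k^{-1}f$ with $c_k\sim k^{n-1}$, which is what makes interior evaluation of the family at $\alpha=-1$ meaningful.

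A second flaw inside (B) is your account of where compactness of $B$ enters. It is not via H\"older $\|W\chi_B\|_{L^2}\le |B|^{\frac12-\frac1{2n+1}}\|W\|_{L^{2n+1}}$ on the Hilbert--Schmidt line: that maneuver would merely reproduce the trace-class endpoint (A) with the much worse power $k^{n-1}$, not a $\mathcal{G}^{2n+1}$ bound with $k^{-\frac1{2n+1}}$. In the Frank--Sabin scheme (Proposition \ref{F}), the $L^{2\lambda_0}$ norms of $W_1,W_2$ arise by raising the weights to complex powers along the interpolation strip; compactness of $B$ is needed for the \emph{kernel} estimate on the left boundary line $\Re\alpha=-(n+\tfrac12)$, namely the $L^1\to L^\infty$ bound $\|G_k^{-(n+1/2)+i\tau}f\|_{L^\infty}\le C_B(1+|\tau|)^{1/2}\|f\|_{L^1}$, which comes from Thangavelu's local estimate (3.7) in \cite{Than3} and genuinely fails globally (Lemma 1 of \cite{Markett}) --- this failure is precisely why the global Theorem \ref{Thm1} is stuck with the smaller range and larger Schatten exponent. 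The decay in $k$ then comes from the $L^2\to L^2$ bound $\|G_k^{i\tau}f\|_{L^2}\le C(1+|\tau|)^n k^{-n}\|f\|_{L^2}$ on the line $\Re\alpha=0$, and Proposition \ref{F} combines the two into $\|W_1\chi_B\mathcal{Q}_k\chi_B W_2\|_{\mathcal{G}^{2n+1}}\le Ck^{-\frac1{2n+1}}\|W_1\|_{L^{2n+1}}\|W_2\|_{L^{2n+1}}$ after multiplying back the factor $c_k\sim k^{n-1}$. Your closing optimality argument (domination of the operator norm by the $\mathcal{G}^{\alpha(p)}$ norm plus sharpness of the single-function estimate) is the intended one, though for the localized statement one should note that the extremizing examples can be taken supported in a fixed compact set, a point the paper itself leaves implicit.
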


Again, by the duality principle, (\ref{Q2}) is equivalent to the following local version of the restriction estimate for orthonormal systems.
\begin{theorem}\label{Thm2}
	Let $B$ be a fixed compact subset of $\mathbb{C}^n$. Let $n \geq 1$, $k \geq 1$, $p \geq 1$ and $\alpha \geq 1$. Then for any orthonormal system $\left(f_{j}\right)_{j \in J}$
	in  $L^2(\mathbb{C}^n)$ and any sequence $\left(n_{j}\right)_{j \in J} \subset \mathbb{C}$,  the estimate
	\begin{align}\label{CH12}
		\left\|\sum_{j \in J} n_{j} \left| \mathcal{Q}_k\chi_B f_{j}\right|^{2}\right\|_{L^{p}(\mathbb{C}^{n})} \leq C_B k^{\varrho'(p)} \left(\sum_{j \in J}\left|n_{j}\right|^{\beta}\right)^{1/\beta},
	\end{align}
	holds for $\beta = \frac{2p}{p+1}$ for $1 \leq p \leq \frac{2n+1}{2n -1}$ and $\beta = \frac{p(2n-1)}{2n}$ for $\frac{2n+1}{2n -1} \leq p \leq \infty$, with 
	\begin{equation}\label{rho1}
		\varrho'(p) =
		\begin{cases}
			-\frac{1}{2}(1-1/p),  & \text{if } 1  \leq p \leq \frac{2n+1}{2n -1}, \\ \vspace{2pt}
			n(1-1/p) - 1, & \text{if } \frac{2n+1}{2n -1} \leq p \leq \infty. 
		\end{cases}
	\end{equation}
	Moreover, the exponent $\varrho'(p)$  in (\ref{CH12}) is optimal.
\end{theorem}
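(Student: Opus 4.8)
The plan is to obtain (\ref{CH12}) as the exact dual, in the sense of the Frank--Sabin duality principle (Lemma 3 in \cite{frank1}), of the Schatten bound (\ref{Q2}) of Theorem \ref{Thm3}; the only work is to put (\ref{Q2}) in the form to which the principle applies and then to transcribe exponents. First I would record the algebraic identity that makes this possible: since $\mathcal{Q}_k$ is an orthogonal projection and $\chi_B$ is a real idempotent multiplier, $\mathcal{Q}_k^2=\mathcal{Q}_k=\mathcal{Q}_k^{*}$ and $\chi_B^2=\chi_B=\chi_B^{*}$, so the positive operator in (\ref{Q2}) factors as $\chi_B\mathcal{Q}_k\chi_B=(\chi_B\mathcal{Q}_k)(\chi_B\mathcal{Q}_k)^{*}$. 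With $A=\chi_B\mathcal{Q}_k$ this is precisely the structure $AA^{*}$ required by the duality principle, and the orthonormal density it produces is built from the functions $\mathcal{Q}_k\chi_B f_j$, i.e.\ the left-hand side of (\ref{CH12}).

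The second step is the bookkeeping of exponents. Writing $\tilde p\in[1,2]$ for the exponent in Theorem \ref{Thm3}, its weights lie in $L^{2\tilde p/(2-\tilde p)}$, its Schatten class is $\mathcal{G}^{\alpha(\tilde p)}$ and its gain is $k^{2\varrho(\tilde p)}$. The duality principle sends the weight exponent $2\tilde p/(2-\tilde p)=2p'$ to a density exponent $p$ determined by $1/p=2(1-1/\tilde p)$, replaces the Schatten index by the conjugate sequence index $\beta=\alpha(\tilde p)'$, and leaves the power of $k$ unchanged, so that $\varrho'(p)=2\varrho(\tilde p)$. I would then verify that this correspondence reproduces (\ref{rho1}) and the two ranges of $\beta$ exactly. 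As $\tilde p$ runs over $[1,2]$, $p$ runs over $[1,\infty]$; the breakpoint $\tilde p=\tfrac{2(2n+1)}{2n+3}$ maps to $p=\tfrac{2n+1}{2n-1}$; the first branch of $\varrho$, where $\alpha(\tilde p)=\tfrac{(2n-1)\tilde p}{4n-(2n+1)\tilde p}$, maps onto $\tfrac{2n+1}{2n-1}\le p\le\infty$ and yields $\varrho'(p)=n(1-1/p)-1$ with $\beta=\tfrac{p(2n-1)}{2n}$, while the second branch, where $\alpha(\tilde p)=\tfrac{2\tilde p}{2-\tilde p}$, maps onto $1\le p\le\tfrac{2n+1}{2n-1}$ and yields $\varrho'(p)=-\tfrac12(1-1/p)$ with $\beta=\tfrac{2p}{p+1}$. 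These are short conjugate-exponent computations, but matching both branches simultaneously is the one spot where an arithmetic slip could occur.

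For the optimality of $\varrho'(p)$ I would specialize (\ref{CH12}) to a one-element system $J=\{1\}$, $n_1=1$. The right-hand side becomes $C_B k^{\varrho'(p)}$ and the left-hand side becomes $\|\mathcal{Q}_k\chi_B f\|_{L^{2p}(\mathbb{C}^n)}^2$, so (\ref{CH12}) forces $\|\mathcal{Q}_k\chi_B f\|_{2p}\le C_B k^{\varrho'(p)/2}\|f\|_2$; taking adjoints this is the localized single-function restriction estimate $\|\mathcal{Q}_k g\|_{L^2(B)}\le C_B k^{\varrho'(p)/2}\|g\|_{L^{r}(\mathbb{C}^n)}$ with $r=(2p)'$. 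A direct computation in each of the two ranges shows $\varrho'(p)/2=\varrho(r)$, with $\varrho$ as in (\ref{rho}); hence any improvement of $\varrho'(p)$ would improve the estimate (\ref{A2}), which is impossible by the sharpness established by Koch--Ricci \cite{Koch} (and by Thangavelu \cite{Than3} for the local endpoint). The only delicate point is that the extremizers saturating (\ref{A2}) are coherent states concentrated near a single point; I would place that point in the interior of $B$ and observe that, because $\mathcal{Q}_k$ localizes these states at a scale tending to $0$ as $k\to\infty$, the cutoff $\chi_B$ retains essentially all of their $L^2$ mass, so the lower bound survives the localization and $\varrho'(p)$ is optimal.

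In short, the proof is the duality principle applied to Theorem \ref{Thm3} followed by a one-function reduction for sharpness; the main obstacle is not analytic but organizational, namely invoking the principle for the non-self-adjoint factor $A=\chi_B\mathcal{Q}_k$, tracking the conjugate exponents through both branches, and checking that the known extremizers can be localized inside the fixed compact set $B$.
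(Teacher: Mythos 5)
Your route is the paper's route: the paper obtains Theorem \ref{Thm2} from Theorem \ref{Thm3} in a single line via the duality principle, and your exponent bookkeeping is correct in full (the correspondence $1/p=2(1-1/\tilde p)$, the breakpoint $\tilde p=\tfrac{2(2n+1)}{2n+3}\mapsto p=\tfrac{2n+1}{2n-1}$, the relation $\varrho'(p)=2\varrho(\tilde p)$, and both branches of $\beta=\alpha(\tilde p)'$ all check out). Your optimality argument --- one-function reduction, then Koch--Ricci/Thangavelu sharpness with the extremizers placed inside $B$ --- is also reasonable, and is more than the paper offers, since the paper asserts optimality of $\varrho'(p)$ without proof.

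There is, however, one step that is wrong as written, and it is worth flagging precisely because the paper commits the same imprecision. With $A=\chi_B\mathcal{Q}_k$, the duality principle produces densities built from $Af_j=\chi_B\mathcal{Q}_k f_j$, \emph{not} from $\mathcal{Q}_k\chi_B f_j$; since $\mathcal{Q}_k$ and $\chi_B$ do not commute, these are different functions. What duality applied to (\ref{Q2}) actually yields is
\[
\Bigl\|\sum_{j} n_j\,\bigl|\mathcal{Q}_k f_j\bigr|^2\Bigr\|_{L^p(B)}\;\le\; C_B\,k^{\varrho'(p)}\Bigl(\sum_j |n_j|^{\beta}\Bigr)^{1/\beta},
\]
a bound on the density of $\mathcal{Q}_k f_j$ restricted to $B$. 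To get the left-hand side of (\ref{CH12}) literally as stated you would need $A=\mathcal{Q}_k\chi_B$, for which $AA^*=\mathcal{Q}_k\chi_B\mathcal{Q}_k$ --- an operator that Theorem \ref{Thm3} does not bound, and which cannot be obtained from $\chi_B\mathcal{Q}_k\chi_B$ by any trivial manipulation (the weighted factors $W_1\mathcal{Q}_k\chi_B$ and $W_1\chi_B\mathcal{Q}_k$ differ by the same non-commuting swap, so passing between $\|X X^*\|_{\mathcal{G}^\alpha}$ and $\|X^* X\|_{\mathcal{G}^\alpha}$ does not help). So either the statement should be read with $\chi_B\mathcal{Q}_k f_j$ (equivalently, with $L^p(B)$ on the left-hand side), or a further argument comparing the two sandwiched operators is required; your proof, like the paper's, silently interchanges the order of the projection and the cutoff.
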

We believe that Theorem \ref{Thm2} remains valid globally, and that the Schatten exponent $\beta$ given in Theorem \ref{Thm2} is optimal. 

The structure of the paper, apart from the introduction, is as follows. In Section \ref{sec2}, we recall the necessary background and discuss dispersive estimates for certain semigroups associated with $\mathcal{L}$. In Section \ref{sec3},  we prove Theorems \ref{MAIN}, \ref{MAIN2} and obtain Strichartz estimates for orthonormal systems corresponding to the wave, Klein–Gordon, and fractional Schr\"{o}dinger equations associated with $\mathcal{L}$. In Section \ref{sec4}, we show that the Strichartz estimates for orthonormal systems associated with the Schr\"{o}dinger propagator for $\mathcal{L}$ fail at the endpoint. Finally, in Section \ref{sec5}, we obtain the results concerning the restriction theorem for the special Hermite spectral projections.

\section{Preliminaries}\label{sec2}
In this section we provide some basic definitions and discuss certain dispersive semigroups associated with the special Hermite operator.
\subsection{Hermite operator and special Hermite functions} 
The Hermite polynomials on $\mathbb{R}$ are defined by
 $$H_k(x)=(-1)^k \frac{d^k}{dx^k}(e^{-x^2} )e^{x^2}, \quad k\in \mathbb{N}_0$$
 and  the normalized Hermite functions $h_k$ on $\mathbb{R}$ are defined by
 $$h_k(x)=(2^k\sqrt{\pi} k!)^{-\frac{1}{2}} H_k(x)e^{-\frac{1}{2}x^2}.$$ 
 The higher dimensional Hermite functions  $\Phi_{\alpha}$ are obtained by taking tensor product of one dimensional Hermite functions. For any multi-index $\alpha \in \mathbb{N}_0^n$ and $x=(x_1, x_2\cdots,x_n) \in \mathbb{R}^n$, we define
 $\Phi_{\alpha}(x)=\prod_{j=1}^{n}h_{\alpha_j}(x_j).$
 The special Hermite functions are then defined as the Fourier-Wigner transform of the Hermite functions $\Phi_\mu$ and $\Phi_\nu$ on $\mathbb{R}^n$, namely,
 $$\Phi_{\mu, \nu}(z)=(2 \pi)^{-\frac{n}{2}} \int_{\mathbb{R}^{n}} e^{i x \cdot \xi} \Phi_\mu\left(\xi+\frac{y}{2}\right)  \Phi_\nu\left(\xi-\frac{y}{2}\right) d \xi, \quad z=x+i y \in \mathbb{C}^{n}.$$  The family of functions $\{\Phi_{\mu, \nu}\}$ forms an orthonormal basis for $L^2(\mathbb{C}^n)$  leading to the special Hermite expansion
 \begin{align}\label{tyu}
 	f(z)=\sum_{\mu ,\nu\in \mathbb{N}^{n}_0}\langle f, \Phi_{\mu, \nu}\rangle  \Phi_{\mu, \nu}(z) = \sum_{k=0}^{\infty} \left(\sum_{|\nu|=k}\sum_{\mu\in \mathbb{N}^{n}_0}\langle f, \Phi_{\mu, \nu}\rangle  \Phi_{\mu, \nu}(z) \right).
 \end{align}
 
 The spectral projection operator $\mathcal{Q}_k$ onto the eigenspace of $\mathcal{L}$ associated to the eigenvalue $2k + n$ is given by 
 \begin{align}
 	\mathcal{Q}_k f = \sum_{|\nu|=k}\sum_{\mu\in \mathbb{N}^{n}_0}\langle f, \Phi_{\mu, \nu}\rangle  \Phi_{\mu, \nu}(z),  \quad f\in \mathcal{S}(\mathbb{C}^n).
 \end{align}
 For each \(k \in \mathbb{N}\),  the spectral decomposition of \(\mathcal{L}\) can be written as
$$
\mathcal{L} f=\sum_{k=0}^{\infty}(2 k+n) \mathcal{Q}_{k} f.
$$
The twisted convolution of two functions $f$ and $g$ on $\mathbb{C}^{n}$ is defined by
\begin{equation}\label{con}f \times g(z)=\int_{\mathbb{C}^{n}} f(z-w) g(w) e^{\frac{i}{2} \operatorname{Im}(z \cdot \bar{w})} d w=\int_{\mathbb{C}^{n}} f(w) g(z-w) e^{-\frac{i}{2} \operatorname{Im}(z \cdot \bar{w})} d w, \quad\mbox{$z\in \mathbb{C}^n$}.\end{equation}
The family $\{\Phi_{\mu, \nu} \}$ satisfies the following orthogonality properties
\begin{align}\label{CH3sh1}
	\Phi_{\mu, \nu} \times \Phi_{\alpha ,\beta}=\left\{\begin{array}{ll}{(2 \pi)^{n / 2} \Phi_{\mu, \beta} }, & {\text{if } \nu=\alpha,} \\ {0}, & {\text{otherwise}.} \end{array}\right.
\end{align}

The special Hermite functions $\Phi_{\nu, \nu}$  are related to the Laguerre functions $\varphi_{k}(z)=L_{k}^{n-1}\left(\frac{1}{2}|z|^{2}\right) e^{-\frac{1}{4}|z|^{2}}$, where $L_{k}^{n-1}$ is the Laguerre polynomial of type $(n-1),$  by the following relation
\begin{align}\label{CH33333}
	(2 \pi)^{n / 2} \sum_{|\nu|=k} \Phi_{\nu, \nu}=\varphi_{k}.
\end{align}
Now taking twisted convolution on both sides of (\ref{tyu}) with $\Phi_{\alpha, \alpha}$  and using the orthogonality property (\ref{CH3sh1}),  we have
\begin{align}\label{CH3333}
	f \times \Phi_{\alpha, \alpha}=(2 \pi)^{n / 2} \sum_{\mu \in \mathbb{N}^n_0}\left\langle f, \Phi_{\mu ,\alpha}\right\rangle \Phi_{\mu ,\alpha}, \quad \alpha \in \mathbb{N}^n_0.
\end{align}

Summing both sides of (\ref{CH3333}) with respect to all $\alpha$ such that $|\alpha|=k$ and using (\ref{CH33333}),   the spectral projection $\mathcal{Q}_{k}$  has the simpler representation
$$
\mathcal{Q}_{k} f(z)=(2 \pi)^{-\frac{n}{2}} \sum_{|\alpha|=k} f \times \Phi_{\alpha, \alpha}(z)=(2 \pi)^{-n} f \times {\varphi_{k}}(z), \quad z\in \mathbb{C}^n.
$$ 

For a detailed study on the special Hermite operator and its associated functions, we refer the reader to the monograph by Thangavelu \cite{Thanga}.

\subsection{Dispersive Semigroups Associated with the Special Hermite Operator}

Consider the Schr\"{o}dinger equation on $\mathbb{C}^n$:
\begin{align}\label{eq1}
	i \partial_t u(t, z)- \mathcal{L}u(t, z) &= 0,\quad z \in \mathbb{C}^n, \hspace{2pt} t \in \mathbb{R}\setminus \{0\}, \\
	\nonumber u(0,z) &= f(z),
\end{align}
If $f \in L^2(\mathbb{C}^n)$, then the solution to the IVP (\ref{eq1}), $e^{-i t \mathcal{L}} f$. The Schr\"odinger propagator can be expressed by using the spectral decomposition of $\mathcal{L}$, that is 
$$
e^{-it \mathcal{L}} f=\sum_{k=0}^{\infty} e^{-(2 k+n) it} \mathcal{Q}_{k}f.
$$
So, we clearly have 
$$\|e^{-it \mathcal{L}} f\|_{L^2(\mathbb{C}^n)} = \|f\|_{L^2(\mathbb{C}^n)}, \quad t \in \mathbb{R}.$$
The Schr\"odinger propagator $e^{-it \mathcal{L}}$ also has the following kernel representation:
\begin{equation}\label{KU}
	e^{-i t \mathcal{L}} f(z)=\int_{\mathbb{C}^{n}} f(z-w) K_{it}(w) e^{\frac{i}{2} \operatorname{Im} (z \cdot \bar{w})} d w,
\end{equation}
where the kernel is given by
$$K_{it}(w) = (2 \pi i \sin t)^{-n} e^{i \cot t \frac{|w|^2}{4}}.$$
This can be easily deduced from the corresponding kernel formula for the heat operator $e^{-t \mathcal{L}}$ by replacing $t$ with $it$ (see \cite{Thanga}, page 29). 
Let $\phi:\mathbb{R}^+\to \mathbb{R}$ be a smooth function satisfies (\ref{e1}) with $0< m \leq 1$. Consider the following Schr\"{o}dinger equation on $\mathbb{C}^n$:
\begin{align}\label{eq1}
	i \partial_t u(t, z)-{\phi(\mathcal{L})}u(t, z) &= 0,\quad z \in \mathbb{C}^n, \hspace{2pt} t \in \mathbb{R}\setminus \{0\}, \\
	\nonumber u(0,z) &= f(z).
\end{align}
If $f \in L^2(\mathbb{C}^n)$, then the solution to the IVP (\ref{eq1}), is given by $u(t,z)=e^{-i\, t\, {\phi(\mathcal{L})}} f(z)$,
where
\begin{align}\label{L}
e^{-i\, t\, {\phi(\mathcal{L})}} f=\sum_{k=0}^{\infty} e^{-i\,t\,\phi(2 k+n) }  \mathcal{Q}_{k}f.
\end{align}
Since $\mathcal{L}$ satisfies conditions (A1) and (A2), and $\phi$ satisfies condition (H1) in Theorem 3.3 of \cite{BUI} (see also \cite{FM}), it follows that the following local dispersive estimate holds:
\begin{align}\label{L1}
\left\|\psi(h^{-1}\sqrt{\mathcal{L})}e^{-it {\phi(\mathcal{L})}}f\right\|_{L^\infty(\mathbb{C}^n)} \leq |t|^{-\frac{2n-1}{2}}h^{(1-m)2n+m}\left\|f\right\|_1,
\end{align} where $\psi\in C^\infty(\mathbb{R})$ supported in $[1/2,2]$, $h\geq 1$ and $|t|\leq T_0<\pi.$

\subsection{Schatten class}\label{subsec2.2}
 Let $X$ be a measure space. Let $A:L^2(X) \rightarrow L^2(X)$ be a compact operator and let    $A^{*}$ denotes the adjoint of $A$. For $1 \leq r<\infty,$ $ \mathcal{G}^{r}({L^2(X)})$ denotes the Schatten space based on $L^2(X)$ that is the space of all compact operators $A$ on $L^2(X)$ such that $Tr |A|^r < \infty$, where \(|A|=\sqrt{A^{*} A}\), and its norm is defined by
$\|A\|_{\mathcal{G}^{r}(L^2(X))}= (Tr |A|^r)^{\frac{1}{r}}$. If $r = \infty $, we define $$\|A\|_{\mathcal{G}^{\infty}(L^2(X))} = \|A\|_{L^2(X) \rightarrow L^2(X)}.$$
Also, the case $r = 2 $ is special in the sense that $\mathcal{G}^2(L^2(X))$ is the Hilbert-Schmidt class, equipped  with the norm 
$$\|A\|_{\mathcal{G}^{2}(L^2(X))} = \|K\|_{L^2(X \times X)},$$ if $K$ is the integral kernel of $A$. For more details on Schatten classes, we refer the reader to Simon \cite{Simon}.

 \section{Strichartz inequality for system of  orthonormal functions}\label{sec3}
This section is devoted to the proofs of Theorems \ref{MAIN} and \ref{MAIN2}. As an application, we obtain Strichartz estimates for orthonormal systems corresponding to the wave, Klein–Gordon, and fractional Schr\"{o}dinger equations associated with the special Hermite operator.

In order to obtain the desired estimate for a system of orthonormal functions we need a duality principle lemma in our setting. By referring to Lemma 3 of \cite{frank1} and applying necessary modifications, we obtain the following result:

 \begin{lem}\label{CH3dual}(Duality principle)
 	Let $\mathcal{H}$ be a separable Hilbert space. Suppose $A$ is a bounded linear operator from $\mathcal{H}$ to $L^{p'}(I, L^{q'}(\mathbb{C}^n))$, where $1 \leq p, q \leq 2$, $I \subset \mathbb{R}$ and let $\alpha \geq 1$. Then the following statements are equivalent:

 	\begin{enumerate}
 		\item There is a constant \(C>0\) such that
 		\begin{align}\label{CH3511}
 			\left\|W A A^{*} \overline{W}\right\|_{\mathcal{G}^{\alpha}\left(L^{2}\left( I \times\mathbb{C}^{n} \right)\right)} \leq C\|W\|_{L^{\frac{2p}{2-p}}(I, L^{\frac{2q}{2-q}}(\mathbb{C}^n))}^{2} \end{align} for all $W \in {L^{\frac{2p}{2-p}}(I, L^{\frac{2q}{2-q}}(\mathbb{C}^n))}$, where the function $W$ is interpreted as an operator which acts by multiplication.
 		
 		\item  For any orthonormal system $\left(f_{j}\right)_{j \in J}$
 		in  $L^2(\mathbb{C}^n)$ and any sequence $\left(n_{j}\right)_{j \in J} \subset \mathbb{C}$, there is a constant \(C'>0\) such that
 		\begin{align}\label{CH3512}
 			\left\|\sum_{j \in J} n_{j} \left| A f_{j}\right|^{2}\right\|_{L^{\frac{p'}{2}}(I, L^{\frac{q'}{2}}(\mathbb{C}^n))} \leq C' \left(\sum_{j \in J}\left|n_{j}\right|^{\alpha^{\prime}}\right)^{1 / \alpha^{\prime}}.
 		\end{align}
 	\end{enumerate}
 \end{lem}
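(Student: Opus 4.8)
The plan is to establish both implications through a chain of two dualities --- the trace duality between the Schatten classes $\mathcal{G}^{\alpha}$ and $\mathcal{G}^{\alpha'}$ (with $\frac{1}{\alpha}+\frac{1}{\alpha'}=1$) and the duality between the mixed Lebesgue space $L^{p'/2}(I,L^{q'/2})$ and its conjugate --- exactly as in Lemma 3 of \cite{frank1}, with the ambient space there replaced by $I\times\mathbb{C}^n$. The first thing I would pin down is the bookkeeping of exponents: writing $V$ for the dual variable paired with the density, one checks $(p'/2)'=\frac{p}{2-p}$ and $(q'/2)'=\frac{q}{2-q}$, so that a potential of the form $V=|W|^2$ with $W\in L^{2p/(2-p)}(I,L^{2q/(2-q)})$ satisfies $\|V\|_{L^{p/(2-p)}(I,L^{q/(2-q)})}=\|W\|^2_{L^{2p/(2-p)}(I,L^{2q/(2-q)})}$. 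This is precisely what makes the single weight $W$ in (\ref{CH3511}) compatible with the exponents appearing in (\ref{CH3512}).

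The central computation is the identification of the pairing. For an orthonormal system $(f_j)$ and coefficients $(n_j)$, set $\gamma=\sum_{j}n_j\,|f_j\rangle\langle f_j|$, a compact normal operator on $\mathcal{H}$ with $\|\gamma\|_{\mathcal{G}^{\alpha'}(\mathcal{H})}=(\sum_j|n_j|^{\alpha'})^{1/\alpha'}$. Then $A\gamma A^{*}$ has integral kernel $\sum_j n_j (Af_j)(x)\overline{(Af_j)(y)}$, whose diagonal is precisely $\sum_j n_j|Af_j|^2$; hence the left-hand side of (\ref{CH3512}) equals $\|\rho_{A\gamma A^*}\|_{L^{p'/2}(I,L^{q'/2})}$, where $\rho_{A\gamma A^*}$ denotes that diagonal. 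Dualizing this mixed norm against a potential $V$, and using that $V$ acts by multiplication, the pairing becomes $\int_{I\times\mathbb{C}^n} V\,\rho_{A\gamma A^*}=\operatorname{Tr}(V A\gamma A^{*})=\operatorname{Tr}(\gamma\,A^{*}VA)$. Thus (\ref{CH3512}) is equivalent, after one further application of Schatten trace duality over $\gamma$, to the operator bound $\|A^{*}VA\|_{\mathcal{G}^{\alpha}(\mathcal{H})}\le C\,\|V\|_{L^{p/(2-p)}(I,L^{q/(2-q)})}$ for all admissible $V$.

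It then remains to match this ``all $V$'' formulation with the single-weight statement (\ref{CH3511}). Here I would invoke the elementary singular-value identities $\|M M^{*}\|_{\mathcal{G}^{\alpha}}=\|M^{*}M\|_{\mathcal{G}^{\alpha}}=\|M\|^2_{\mathcal{G}^{2\alpha}}$ with $M=WA$, which turn the left side of (\ref{CH3511}) into $\|WA\|_{\mathcal{G}^{2\alpha}}^2=\|A^{*}|W|^2A\|_{\mathcal{G}^{\alpha}}$, so that (\ref{CH3511}) is exactly the special case $V=|W|^2\ge 0$ of the ``all $V$'' bound; this yields (1)$\Rightarrow$(2) at once. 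For the reverse passage from nonnegative to arbitrary complex $V$, I would write a polar factorization $V=\overline{W_1}\,W_2$ with $|W_1|=|W_2|=|V|^{1/2}$, so that $A^{*}VA=(W_1A)^{*}(W_2A)$, and apply H\"older for Schatten norms ($\tfrac1{2\alpha}+\tfrac1{2\alpha}=\tfrac1\alpha$) together with the identity above: $\|A^{*}VA\|_{\mathcal{G}^{\alpha}}\le\|W_1A\|_{\mathcal{G}^{2\alpha}}\|W_2A\|_{\mathcal{G}^{2\alpha}}=\prod_{i=1,2}\|W_iAA^{*}\overline{W_i}\|^{1/2}_{\mathcal{G}^{\alpha}}\le C\|W_1\|\|W_2\|=C\|V\|$.

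I expect the main obstacle to be making the two trace dualities rigorous rather than formal. Specifically, one must realize the full Schatten norm $\|A^{*}VA\|_{\mathcal{G}^\alpha}$ as a supremum over density operators $\gamma$ of the orthonormal-system form $\sum_j n_j|f_j\rangle\langle f_j|$ --- via the spectral theorem for compact normal operators together with a finite-rank truncation and a limiting argument, so that infinite index sets $J$ are covered --- and one must justify the polar factorization and the Schatten--H\"older step for genuinely complex potentials $V$, since the coefficients $n_j$ are allowed to be complex. The mixed-norm $L^{p'/2}(I,L^{q'/2})$ duality and the interchange of the trace with the spatial integral are routine once one notes that $1\le p,q\le 2$ places all the relevant conjugate exponents in the correct ranges.
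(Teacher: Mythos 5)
Your proposal is correct and follows essentially the same route as the paper: the paper obtains this lemma by invoking Lemma 3 of Frank--Sabin \cite{frank1} ``with necessary modifications,'' and your argument is exactly that adaptation --- Schatten trace duality paired with mixed-norm $L^{p'/2}(I,L^{q'/2})$ duality, the identification $\rho_{A\gamma A^*}=\sum_j n_j|Af_j|^2$, the identity $\|WAA^*\overline{W}\|_{\mathcal{G}^{\alpha}}=\|WA\|^2_{\mathcal{G}^{2\alpha}}=\|A^*|W|^2A\|_{\mathcal{G}^{\alpha}}$, and the polar factorization plus Schatten--H\"older step for complex potentials. Your exponent bookkeeping $(p'/2)'=p/(2-p)$, $(q'/2)'=q/(2-q)$ and the treatment of the trace-duality supremum over orthonormal-form density operators are the right technical points and are handled correctly.
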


Now, we proceed to the proof of Theorem \ref{MAIN}. Consider $S_\phi=\{(\lambda, \mu, \nu)\in \mathbb{R}\times\mathbb{N}_0^n\times\mathbb{N}_0^n : \lambda=\phi(2|\nu|+n)\}$, with respect to the measure $d\sigma_N$ defined in (\ref{az1}). Then
\begin{align}\label{aaone}\nonumber
	\mathcal{E}_{S_{\phi,N}}\mathcal{E}_{S_{\phi,N}}^* f (t, z)&=  \sum_{(\lambda, \mu,\nu)\in S_\phi} \psi^2\left(\sqrt{\lambda}/{N}\right){e^{-it\lambda}} \hat{f}(\lambda, \mu,\nu)\Phi_{\mu,\nu}(z)\nonumber\\
	&=(2\pi)^{-\frac{1}{2}} \sum_{(\mu,\nu,\lambda)\in S_\phi}  \int_{\mathbb{R}} \psi^2\left(\sqrt{\lambda}/{N}\right){e^{-it\lambda}}  \langle {f}(t', \cdot), \Phi_{\mu,\nu}\rangle   \Phi_{\mu,\nu}(z)  ~dw~dt'\\
	&=(2\pi)^{-\frac{1}{2}}   \int_{\mathbb{R}}\sum_{\mu, \nu} \psi^2\left(\frac{\sqrt{2|\nu|+n}}{N}\right) {e^{-i(t-t')\phi(2|\nu|+n)} } \langle {f}(t', \cdot), \Phi_{\mu,\nu}\rangle   \Phi_{\mu,\nu}(z)   ~dw~dt'.
\end{align}
Now 	from  \eqref{CH3333} and (\ref{CH33333}), we  write
\begin{align} \nonumber
	\mathcal{E}_{S_{\phi,N}}\mathcal{E}_{S_{\phi,N}}^* f (t, z)&=(2\pi)^{-{(n+\frac{1}{2})}}   \int_{\mathbb{R}} {f}(t', \cdot )\times  \sum_{ k=0}^\infty  \psi^2\left(\frac{\sqrt{2k+n}}{N}\right) {e^{-i(t-t')\phi(2k+n)} }\varphi_{k}(z)dt' \\\nonumber
	&=  (2\pi)^{-{(n+\frac{1}{2})}}       \int_{\mathbb{R}}   \int_{\mathbb{C}^n}   H_{\phi,N}(t-t', z-w) {f}( t', w) e^{-\frac{i}{2} \operatorname{Im} (z\cdot \bar{w})} dt' dw
\end{align}
where \begin{align}\label{new kernel}
	H_{\phi, N}(t, z) =  \sum_{ k=0}^\infty  \psi^2\left(\frac{\sqrt{2k+n}}{N}\right) {e^{-it\phi(2k+n)} }\varphi_{k}(z).
\end{align}\\
\vspace{0pt}

\vspace{6pt}
\noindent{\bf \emph{Proof of Theorem \ref{MAIN}} : }
Let $\Psi \in C^\infty(\mathbb{R})$ be an even function such that $0 \leq \Psi \leq 1$, $\Psi = 1$ in $[0,1]$ and $\Psi =0$ in $[2,\infty)$. Let $\varPsi(s)=\Psi(s) - \Psi(2s)$ such that $supp \,\,\varPsi \subset (1/2,2)$ and generate a Littlewood-Paley decomposition $\sum_{j\in \mathbb{Z}} \varPsi_j(s) =1 $, for all $s > 0,$ where  $\varPsi_j(s) = \varPsi(2^{-j}s)$. It follows that  $$\sum_{j\geq 0} \varPsi_j(s) =1, \quad \forall s \geq 1.$$

	For any small $\varepsilon > 0$, $\alpha \in \mathbb{C}$ with $\operatorname{Re}\alpha \in\left[-r/2, 0\right]$ and $j \geq 0$, we consider the following analytic family of operators  
	
	$$G_{\alpha, j, \varepsilon}f(t,z)=     \int_{\mathbb{C}^n} \int_{I}  H_{\phi, N, j, \varepsilon} (t-t', z-w) e^{-\frac{i}{2} \operatorname{Im} (z \cdot \bar{w})} {f}( t', w)~ dt' dw,$$
	where the kernel
	$$
	H_{\phi, N, j, \varepsilon} (t, z)=1_{\varepsilon < |T_0|} \,t^{-1-\alpha}  \sum_{ k=0}^\infty  \psi^2\left(\frac{\sqrt{2k+n}}{N}\right)\varPsi_j\left(\sqrt{2k+n}\right) {e^{-it\phi(2k+n)} }\varphi_{k}(z).
	$$
	But from (\ref{L1}),   for  every $ t  \in [-T_0,T_0]=I$ and $ z \in  \mathbb{C}^n   $, we have
	\begin{align}\label{CH2two}
		\left|H_{\phi, N, j, \varepsilon} (t, z)\right| \leq C|t|^{-\frac{2\operatorname{Re} (\alpha)+2n + 1 }{2}} 2^{((1-m)2n + m)j}.
	\end{align}
Let $z_1 = -\frac{r}{2} + i \tau,  \tau \in \mathbb{R}$, using the Hardy–Littlewood–Sobolev inequality, (see page 39 in \cite{bek1}) along with (\ref{CH2two})  yields
	\begin{align*}
			\left\|W_1 G_{ z_1, \varepsilon}  W_2\right\|_{\mathcal{G}^{2}\left(L^{2}\left(I
			\times \mathbb{C}^{n}\right)\right)}^2 
		&=\int_{I\times I}\int_{\mathbb{C}^{2n}} |W_{1}(t, z)|^2 \left|H_{\varrho,z_1,\varepsilon} ( t-t', z-w)\right|^{2} |W_{2}\left(t^{\prime}, w\right)|^{2 } dz~ d w ~d t ~d t^{\prime}\\
		&\leq C 2^{((1-m)2n + m)2j}  \int_{I } \int_{I } \frac{\left\|W_{1}(t)\right\|_{L^{2  }\left(\mathbb{C}^{n}\right)}^{2  }\left\|W_{2}\left(t^{\prime}\right)\right\|_{L^{2  }\left(\mathbb{C}^{n}\right)}^{2  }}{\left|t-t^{\prime}\right|^{2n+1 -r}} d t d t^{\prime}\\
		&\leq C2^{((1-m)2n + m)2j}   \left\|  \|W_{1} \|^2_{L_{w}^{2  }(\mathbb{C}^n) }\right \|_{ L_{t}^{{u}} \left(I \right)}
		\left\|  \|W_{2} \|^2_{L_{w}^{2  }(\mathbb{C}^n) }\right \|_{ L_{t}^{{u}} \left(I \right)},
		\end{align*}
		provided we have  $0 \leq 2n + 1 -r  <1$ and $\frac{1}{u}+\frac{2n + 1 -r}{2} =1$. Thus, for $2n < r \leq 2n+1$, we obtain
		$$\left\|W_1 G_{ z_1, j, \varepsilon}  W_2\right\|_{\mathcal{G}^{2}\left(L^{2}\left(I
			\times \mathbb{C}^{n}\right)\right)} \leq C 2^{((1-m)2n + m)j}\|W_1\|_{L^\frac{4}{r-2n+1}(I, L^2\left(\mathbb{C}^{n}\right))}\|W_2\|_{L^\frac{4}{r-2n+1}(I, L^2\left(\mathbb{C}^{n}\right))},$$
			with the constant $C$ is independent of $\varepsilon$ and $\tau$.

   Next,  we   consider the case   $z_2 = i\tau, \tau \in \mathbb{R}$. We show  that $G_{z_2 ,j,\varepsilon}: L^2\left( I \times \mathbb{C}^{n} \right) \rightarrow L^2\left( I \times \mathbb{C}^{n} \right)$  is bounded  with some constant   that only depends on  the dimension $n$ and $\tau$ exponentially.  After a simple calculation, we write
	\begin{align*}
		G_{z_2, j, \varepsilon}f(t,z)
		=(2{\pi})^n\sum_{\mu, \nu}    \Phi_{\mu,\nu}(z)  \psi\left(\frac{\sqrt{2|\nu|+n}}{N}\right)&\varPsi_j\left(\sqrt{2|\nu|+n}\right)\\ \cdot &\int_{\varepsilon<|t'|< T_0}     {e^{-i t'(2|\nu|+n)^{\varrho}}}   {t'}^{-1- \alpha }  \hat{f}_2(t-t', \cdot) (\mu, \nu) dt',
	\end{align*}
	where $  \hat{f}_2$ denotes the special Hermite transform of $f$ with respect to the second variable. Then  using  Plancherel's theorem,  for each $t \in {I}$, we have
	\begin{align}\label{newdefine11}
	\left\|G_{z_2, j, \varepsilon} f(t, \cdot)\right\|_{L^2(\mathbb{C}^n)}^2 
		&\leq C  \sum_{\mu,\nu}    \left| \int_{\varepsilon<|t'|< T_0}   {t'}^{-1+i \tau} \varTheta_{\mu, \nu}\left(t-t'\right)~dt'\right|^2
	\end{align}
	where $\varTheta_{\mu,\nu}(t)=e^{-i t\phi(2|\nu|+n) }      \hat{f}_2( t, \cdot ) (\mu,\nu)$.  If we define $$\varGamma_{z_2, \varepsilon}: \varTheta(t) \mapsto\int_{\varepsilon<|t'|< T_0}   {t'}^{-1+i \tau} \varTheta\left(t-t'\right)\;dt',$$ then (\ref{newdefine11}) becomes
	
	\begin{align}
		\left\|G_{z_2, j, \varepsilon} f\right\|_{L^2( I \times \mathbb{C}^n)} \leq  (2{\pi})^{2n}   \sum_{\mu,\nu}    \left\| \varGamma_{z_2, \varepsilon} \varTheta_{\mu,\nu} \right\|_{L^2(I)}^2.
	\end{align}
	Since, the operator $\varGamma_{z_2, \varepsilon}$ is just a  Hilbert transform up to $i \tau $, from \cite{vega},   the operator $\varGamma_{z_2, \varepsilon}: L^2 \rightarrow L^2$ is bounded with constant   depends  only on $\tau$ exponentially. Thus, using  the boundedness of $$G_{z_2, j, \varepsilon} :  L^2\left(I \times \mathbb{C}^{n} \right) \rightarrow L^2\left(I \times \mathbb{C}^{n} \right)$$ and the fact that $S_{\infty}$-norm is the operator norm, we have 
	$$
	\left\|W_1 T_{z_2, \varepsilon}  W_2\right\|_{\mathcal{G}^{\infty}\left(L^{2}\left(I \times \mathbb{C}^{n}\right)\right)} \leq C (\tau) \left\|W_1\right\|_{L^\infty(I, L^\infty\left(\mathbb{C}^{n}\right))}\left\|W_2\right\|_{L^\infty(I, L^\infty\left(\mathbb{C}^{n}\right))}.	$$
	Applying Stein's analytic interpolation result \cite{ber, interpolation}, we get
	$$
	\left\|W_1 G_{-1,j, \varepsilon}  W_2\right\|_{\mathcal{G}^{r}\left(L^{2}\left(I \times \mathbb{C}^{n}\right)\right)} \leq C 2^{\frac{((1-m)2n + m)2j}{r}} \left\|W_1\right\|_{L^\frac{2r}{r-2n+1}(I, L^r\left(\mathbb{C}^{n}\right))}\left\|W_2\right\|_{L^\frac{2r}{r-2n+1}(I, L^r\left(\mathbb{C}^{n}\right))},$$
	with the constant independent of $\varepsilon$ and $j$. Summing over $2^j \leq N$ and letting $\varepsilon \rightarrow 0$, we obtain (\ref{Sch1}) for $2n < q \leq 2n+1$, since $\mathcal{E}_{S_{\phi,N}}\mathcal{E}_{S_{\phi,N}}^* = \sum_{\{j :\,2^j\leq N\}}G_{-1, j, \varepsilon} $ in the limit as $\varepsilon \rightarrow 0$.
	
	Again, it is easy to check that $$ \left\|\sum_{j \in J} n_{j} \left| \mathcal{E}_{S_{\phi,N}} f_j\right|^{2}\right\|_{L^{\infty}(I, L^{1}(\mathbb{C}^n))} \leq  \sum_{j \in J}\left|n_{j}\right| ~\mbox{sup}_{t\in I}~ \|\mathcal{E}_{S_{\phi,N}} f_j\|^2_{ L^2(\mathbb{C}^n))} \leq \sum_{j \in J}\left|n_{j}\right| .$$ Hence by duality Lemma \ref{CH3511}, we have  
	\begin{align}\label{label}
		\|W_1\mathcal{E}_{S_{\phi,N}}\mathcal{E}_{S_{\phi,N}}^*W_2\|_{\mathcal{G}^{\infty}(L^2(I\times \mathbb{C}^n))} \leq C\left\|W_1\right\|_{L^2(I, L^\infty\left(\mathbb{C}^{n}\right))}\left\|W_2\right\|_{L^2(I, L^\infty\left(\mathbb{C}^{n}\right))}.
	\end{align}
	Interpolating (\ref{Sch1}) with $q = 2n+1$ and (\ref{label}), we get (\ref{Sch1}) for $q \geq 2n+1$.
 $\hfill{\square}$	

\vspace{5pt}

\vspace{6pt}
\noindent{\bf \emph{Proof of Theorem \ref{MAIN2}} : }
	Consider  the discrete surface  $S_\phi= \{(\lambda, \mu, \nu)\in \mathbb{R} \times \mathbb{N}_0^n\times \mathbb{N}_0^n : \lambda=(2|\nu|+n)\}$ with respect to the measure $\sigma_N$ defined in (\ref{az1}). Then for all $f$ such that $\hat{f} \in \ell^{1}(S_\phi)$ and for all $(t, z)\in \mathbb{R}\times \mathbb{C}^n$, the extension operator can be written as
\begin{align}\label{CH3surfacee}
	\mathcal{E}_{S_{\phi,N}} f(t, z)=\sum_{(\lambda, \mu, \nu)\in S} \psi({\sqrt{\lambda}}/{N}){e^{-it\lambda}}\hat{f}(\lambda, \mu, \nu) \Phi_{\mu, \nu} (z),
\end{align}   where $\hat{f}(\lambda, \mu, \nu)$ is defined in (\ref{Fourier-special Hermite transform}). 	Using the fact that $$f\times\Phi_{\mu\mu}=(2\pi)^{\frac{n}{2}}\sum_{\nu}\langle f, \Phi_{\mu, \nu}\rangle  \Phi_{\mu, \nu},$$ and choosing
$$
\hat{f}(\lambda, \mu, \nu)=\left\{\begin{array}{ll}       {(2\pi)^n\langle u, \Phi_{\mu, \nu}\rangle }, & \text {if}~ \lambda=\phi(2|\nu|+n) \, \text{and} \,\sqrt{2|\nu|+n} \leq N\\{0}, & \text {otherwise}, \end{array} \right.
$$ for some $u:\mathbb{C}^n\to \mathbb{C}$
in (\ref{CH3surfacee}), we get
\begin{align}\label{nn}\nonumber
	\mathcal{E}_{S_{\phi,N}} f(t, z)&=(2\pi)^n\sum_{\nu} \left( \sum_{\mu}\langle u, \Phi_{\mu, \nu}\rangle  \Phi_{\mu, \nu}(z)\right) e^{-it \phi(2|\nu|+n)}\nonumber\\
	&=(2\pi)^{\frac{n}{2}}\sum_{\nu}  e^{-it \phi(2|\nu|+n)}u\times  \Phi_{\nu, \nu}(z)~\nonumber\\
	&=\sum_{k=0}^{\infty} e^{-it \phi(2k+n)} u\times   \varphi_k(z)=e^{-it\phi(\mathcal{L})}u(z).
\end{align}
Thus, Theorem \ref{MAIN2} follows from Theorem \ref{MAIN} together with the duality principle (Lemma \ref{CH3dual}). 
$\hfill \square$

\vspace{6pt}
\noindent{\bf \emph{Proof of Theorem \ref{MAIN3}} : } 
We adopt an approach based on \cite{nakamura}. Let $\varPhi \in C^{\infty}_c$ such that $supp\,\, \varPhi \subset [1/2,2]$ and $h \geq 1$.
Then Theorem \ref{MAIN2} can be rephrased as
\begin{align}\label{qw541}
			\left\|\sum_{j\in J} n_{j }\left|e^{-i t \phi(\mathcal{L})}\varPhi(h^{-1}\sqrt{\mathcal{L}})f_{j }\right|^{2}\right\|_{L^p ((-T_0,T_0), L^q(\mathbb{C}^n))} \leqslant C h^\sigma\left(\sum_{j \in J}\left|n_{j}\right|^{\beta}\right)^{1/\beta}.
		\end{align}

Let $\{\varPsi_\ell\}_{\ell\geq 0}$ be defined as in the proof of Theorem \ref{MAIN}.  
Using the vector-valued version of the Littlewood–Paley inequality (see \cite{33}, Lemma 1), triangle inequality, we obtain 
\begin{align*}
			\left\|\sum_{j\in J} n_{j }\left|e^{-i t \phi(\mathcal{L})}\mathcal{L}^{-\frac{\sigma}{4}-\varepsilon}f_{j }\right|^{2}\right\|_{L^p ((-T_0,T_0), L^q(\mathbb{C}^n))} &\leqslant  \sum_{\ell=0}^\infty\left\|\sum_{j\in J} n_{j }\left|\mathcal{L}^{-\frac{\sigma}{4}-\varepsilon} e^{-i t \phi(\mathcal{L})}\varPsi_\ell(\sqrt{\mathcal{L}})f_{j }\right|^{2}\right\|_{L^p ((-T_0,T_0), L^q(\mathbb{C}^n))}\\&= \sum_{\ell=0}^\infty2^{-\ell(\sigma+4\varepsilon)}\left\|\sum_{j\in J} n_{j }\left|e^{-i t \phi(\mathcal{L})}\tilde{\varPsi}_\ell(\sqrt{\mathcal{L}})f_{j }\right|^{2}\right\|_{L^p ((-T_0,T_0), L^q(\mathbb{C}^n))},
\end{align*}
where $\tilde{\varPsi}_\ell(s) = \tilde{\varPsi}(2^{-\ell} s)$ with  $\tilde{\varPsi}(s)= s^{-\frac{\sigma}{2}-2\varepsilon}\varPsi(s)$. Clearly, $\tilde{\varPsi}\in C^\infty_c(\mathbb{R})$ and $supp\, \tilde{\varPsi} \subset [1/2,2]$, thus from (\ref{qw541}) we obtain
\begin{align*}
\left\|\sum_{j\in J} n_{j }\left|e^{-i t \phi(\mathcal{L})}\mathcal{L}^{-\frac{\sigma}{4}-\varepsilon}f_{j }\right|^{2}\right\|_{L^p ((-T_0,T_0), L^q(\mathbb{C}^n))} \leq C_\varepsilon \left(\sum_{j \in J}\left|n_{j}\right|^{\beta}\right)^{1/\beta}.
\end{align*}
This completes the proof of the theorem. 
$\hfill\square$	
	
\vspace{6pt}		
\begin{rem}
In \cite{Feng1}, the authors established Strichartz estimates for systems of orthonormal functions associated with general dispersive flows of the form $e^{- i t \phi(L)} \psi(\sqrt{L})$, where $L$ is a non-negative self-adjoint operator on a metric measure space $X$ and $\psi \in C^{\infty}_c(\mathbb{R})$ with $\mathrm{supp}\,\psi \subset [-1/2,2]$. We believe that, by adapting the techniques as in the proof of Theorem \ref{MAIN3}, one can extend their result to a global Strichartz estimates for orthonormal systems of type (\ref{111})  associated with the flow $e^{- i t \phi(L)}$.
\end{rem}

\noindent {\bf The wave equation:} It is well known that the solution of the special Hermite wave equation\begin{align}
	i \partial_{tt} u(t, z)-{\mathcal{L}}u(t, z) &= 0,\quad z \in \mathbb{C}^n, \hspace{2pt} t \in \mathbb{R}\setminus \{0\}, \\
	\nonumber u(0,z) &= f(z)\\\nonumber\partial_tu(0,z)&=g(z).
\end{align}
 can be expressed as a superposition of waves generated by the propagators $e^{\pm i t \sqrt{\mathcal{L}}}.$ This corresponds to the case $\phi(r)=\sqrt{r}$ which satisfies condition (\ref{e1}) with $m=\frac{1}{2}$. We obtain the following Strichartz estimate by Theorem \ref{MAIN2}.
\begin{theorem}
		Let $n,p,q \geq1$ and $N\geq 2n$. Suppose $p, q $ satisfy  the conditions $1\leq q <\frac{n}{n-1},~ \frac{2}{p}+\frac{2n - 1}{q} = 2n-1,$ and let $\sigma=\frac{2n+1}{2}(1-\frac{1}{q})$. Then,
		\begin{align*}
			\left\|\sum_{j\in J} n_{j }\left|e^{-i t \sqrt{\mathcal{L}}}f_{j }\right|^{2}\right\|_{L^p ((-T_0,T_0), L^q(\mathbb{C}^n))} \leqslant C N^\sigma\left(\sum_{j \in J}\left|n_{j}\right|^{\beta}\right)^{1/\beta}
		\end{align*}
		holds for all orthonormal system $(f_j)_j$ in $L^2(\mathbb{C}^n)$ with $supp \hat{f}_j \subset \{(\mu,\nu): \sqrt{2|\nu|+n }\leq N\}$ and all sequence $ (n_{j })_j \subset \mathbb{C}$.
\end{theorem}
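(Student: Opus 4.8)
The plan is to recognize this statement as a direct specialization of Theorem \ref{MAIN2} to the particular profile $\phi(r)=\sqrt r$, which governs the wave propagators $e^{\pm i t\sqrt{\mathcal{L}}}$. Thus the entire task reduces to two verifications: that $\phi(r)=\sqrt r$ meets the structural hypothesis (\ref{e1}), and that substituting the corresponding value of $m$ into the exponent $\sigma$ of Theorem \ref{MAIN2} reproduces the claimed $\sigma=\frac{2n+1}{2}(1-\frac{1}{q})$.

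First I would check (\ref{e1}). Differentiating, $\phi'(r)=\tfrac{1}{2}r^{-1/2}$ and $\phi''(r)=-\tfrac{1}{4}r^{-3/2}$, so with $m=\tfrac{1}{2}$ we have $\phi'(r)\sim r^{m-1}$ and $|\phi''(r)|\sim r^{m-2}$ for $r\geq 1$. Since condition (\ref{e1}) is read up to absolute constants (the two-sided comparison $\sim$ already appears there, and all constants in Theorems \ref{MAIN} and \ref{MAIN2} absorb such factors), the hypothesis holds with $m=\tfrac{1}{2}$. The one point worth flagging is that the literal lower bound $|\phi''(r)|\geq r^{m-2}$ is valid only after inserting the constant $\tfrac14$; this is harmless because the kernel estimate (\ref{L1}) and the ensuing Schatten bound depend only on the implicit comparison constants.

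Next I would substitute $m=\tfrac{1}{2}$ into $\sigma=(2n(1-m)+m)(1-\tfrac{1}{q})$, obtaining $2n\cdot\tfrac12+\tfrac12=n+\tfrac12=\tfrac{2n+1}{2}$, hence $\sigma=\tfrac{2n+1}{2}(1-\tfrac1q)$, exactly as stated. The admissibility conditions $1\leq q<\tfrac{n}{n-1}$ and $\tfrac{2}{p}+\tfrac{2n-1}{q}=2n-1$, together with the Schatten exponent $\beta=\tfrac{2q}{q+1}$, are imported unchanged from Theorem \ref{MAIN2}. With these identifications the desired inequality is precisely (\ref{OSinq2}) for $\phi(r)=\sqrt r$, after recalling that the wave solution decomposes into the two half-wave propagators $e^{\pm it\sqrt{\mathcal L}}$, each of which is covered by the preceding theorem.

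Since this is a clean corollary, I do not expect a genuine obstacle beyond the bookkeeping above; the only subtlety is confirming that $\phi(r)=\sqrt r$ genuinely lies within the reach of (\ref{e1}) at the endpoint value $m=\tfrac12$, and that the second-derivative lower bound is interpreted modulo constants rather than literally.
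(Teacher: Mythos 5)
Your proposal is correct and follows essentially the same route as the paper: the paper likewise obtains this statement as an immediate specialization of Theorem \ref{MAIN2} to $\phi(r)=\sqrt{r}$, which satisfies (\ref{e1}) with $m=\tfrac12$, so that $\sigma=(2n(1-m)+m)(1-\tfrac1q)=\tfrac{2n+1}{2}(1-\tfrac1q)$. Your added remark that the lower bound $|\phi''(r)|\geq r^{m-2}$ holds only up to the constant $\tfrac14$ is a fair observation, consistent with the paper's implicit reading of (\ref{e1}) modulo constants.
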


\noindent {\bf The Klein-Gordon equation:} The solution of the special Hermite Klein-Gordon equation\begin{align}
	i \partial_{tt} u(t, z)-{\mathcal{L}}u(t, z)- u(t, z) &= 0,\quad z \in \mathbb{C}^n, \hspace{2pt} t \in \mathbb{R}\setminus \{0\}, \\
	\nonumber u(0,z) &= f(z)\\\nonumber\partial_tu(0,z)&=g(z).
\end{align}
can be expressed as a superposition of waves generated by the propagators $e^{\pm i t \sqrt{1+\mathcal{L}}},$ which corresponds to the case $\phi(r)=\sqrt{r}$ satisfying (\ref{e1}) with $m=\frac{1}{2}$. We obtain the following Strichartz estimate by Theorem \ref{MAIN2}.

\begin{theorem}
		Let $n,p,q \geq1$ and $N\geq 2n$.  Suppose $p, q $ satisfy  the conditions $1\leq q <\frac{n}{n-1},~ \frac{2}{p}+\frac{2n - 1}{q} = 2n-1,$ and let $\sigma=\frac{2n+1}{2}(1-\frac{1}{q})$. Then,
		\begin{align*}
			\left\|\sum_{j\in J} n_{j }\left|e^{-i t \sqrt{1+\mathcal{L}}}f_{j }\right|^{2}\right\|_{L^p ((-T_0,T_0), L^q(\mathbb{C}^n))} \leqslant C N^\sigma\left(\sum_{j \in J}\left|n_{j}\right|^{\beta}\right)^{1/\beta}
		\end{align*}
		holds for all orthonormal system $(f_j)_j$ in $L^2(\mathbb{C}^n)$ with $supp \hat{f}_j \subset \{(\mu,\nu): \sqrt{2|\nu|+n }\leq N\}$, and all sequence $ (n_{j })_j \subset \mathbb{C}$.
\end{theorem}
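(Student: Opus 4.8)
The plan is to read this off as a direct specialization of Theorem \ref{MAIN2}. The propagators generating the Klein--Gordon solution are $e^{\pm i t\sqrt{1+\mathcal{L}}}$, so the relevant flow is $e^{-it\phi(\mathcal{L})}$ with $\phi(r)=\sqrt{1+r}$ (the stated $\phi(r)=\sqrt r$ being the wave-type normalization; the additive constant is exactly what distinguishes Klein--Gordon from the wave case, and, as we check below, it does not alter the asymptotics that (\ref{e1}) records). Since the theorem asserts the estimate for the single propagator $e^{-it\sqrt{1+\mathcal{L}}}$, it is enough to identify $\phi$, confirm the hypothesis (\ref{e1}), and then invoke Theorem \ref{MAIN2} verbatim; the estimate for $e^{+it\sqrt{1+\mathcal{L}}}$ follows by the time reversal $t\mapsto -t$, so the full solution $u$ is controlled by the triangle inequality.

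The first step is to verify that $\phi(r)=\sqrt{1+r}$ satisfies (\ref{e1}) with $m=\tfrac12$. Differentiating gives $\phi'(r)=\tfrac12(1+r)^{-1/2}$ and $\phi''(r)=-\tfrac14(1+r)^{-3/2}$. Hence for $r\geq 1$ one has $\phi'(r)\sim r^{-1/2}=r^{m-1}$ and $|\phi''(r)|=\tfrac14(1+r)^{-3/2}\gtrsim r^{-3/2}=r^{m-2}$, with harmless implied constants. This is precisely (\ref{e1}) with $m=\tfrac12$, and it shows that the shift by $1$ leaves the power-type behaviour near infinity unchanged. In particular the local dispersive estimate (\ref{L1}), which underlies Theorem \ref{MAIN}, applies to $\phi(\mathcal{L})=\sqrt{1+\mathcal{L}}$ with the same $m=\tfrac12$.

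With (\ref{e1}) in hand, Theorem \ref{MAIN2} applies under the stated conditions $1\leq q<\tfrac{n}{n-1}$ and $\tfrac{2}{p}+\tfrac{2n-1}{q}=2n-1$, producing the bound with growth exponent $\sigma=(2n(1-m)+m)(1-\tfrac1q)$ and Schatten exponent $\beta=\tfrac{2q}{q+1}$. Substituting $m=\tfrac12$ yields $2n(1-\tfrac12)+\tfrac12=n+\tfrac12=\tfrac{2n+1}{2}$, so $\sigma=\tfrac{2n+1}{2}\bigl(1-\tfrac1q\bigr)$, exactly as claimed. I do not expect a genuine obstacle, as the result is a corollary; the only point demanding a little care is the bookkeeping just carried out, namely confirming that the additive unit in $\sqrt{1+r}$ preserves condition (\ref{e1}) so that the effective smoothing parameter is $m=\tfrac12$, matching the wave case.
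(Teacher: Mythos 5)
Your proposal is correct and follows essentially the same route as the paper: the paper also obtains this statement as a direct corollary of Theorem \ref{MAIN2} with $m=\tfrac12$, giving $\sigma=(2n(1-m)+m)(1-\tfrac1q)=\tfrac{2n+1}{2}(1-\tfrac1q)$. In fact your write-up is slightly more careful than the paper's, since you explicitly verify that $\phi(r)=\sqrt{1+r}$ satisfies (\ref{e1}) (up to the implicit constants that condition is clearly meant to allow), whereas the paper merely asserts this and even misprints the symbol as $\phi(r)=\sqrt{r}$.
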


\noindent {\bf The fractional Schr\"{o}dinger equation:} For $0\leq \varrho<1$, the solution to the special Hermite fractional Schr\"{o}dinger equation\begin{align}
	i \partial_{t} u(t, z)-{\mathcal{L}^\varrho}u(t, z) &= 0,\quad z \in \mathbb{C}^n, \hspace{2pt} t \in \mathbb{R}\setminus \{0\},\\
	\nonumber u(0,z) &= f(z) 
\end{align}
 is described by the unitary flow $e^{- i t\mathcal{L}^\varrho}$. This corresponds to the case 
 $\phi(r)={r}^\varrho$ satisfying (\ref{e1}) with $m=\varrho$. We obtain the following Strichartz estimate by Theorem \ref{MAIN2}.

\begin{theorem}
		Let $n,p,q \geq1$ and $N\geq 2n$.  Suppose $p, q $ satisfy  the conditions $1\leq q <\frac{n}{n-1},~ \frac{2}{p}+\frac{2n - 1}{q} = 2n-1,$ and let $\sigma=(2n(1-\varrho)+\varrho)(1-\frac{1}{q})$. Then 
		\begin{align}\label{OSinq2}
			\left\|\sum_{j\in J} n_{j }\left|e^{-i t \mathcal{L}^\varrho}f_{j }\right|^{2}\right\|_{L^p ((-T_0,T_0), L^q(\mathbb{C}^n))} \leqslant C N^\sigma\left(\sum_{j \in J}\left|n_{j}\right|^{\beta}\right)^{1/\beta}
		\end{align}
		holds for all orthonormal system $(f_j)_j$ in $L^2(\mathbb{C}^n)$ with $supp \hat{f}_j \subset \{(\mu,\nu): \sqrt{2|\nu|+n }\leq N\}$,  and all sequence $ (n_{j })_j \subset \mathbb{C}$ with $\beta = \frac{2q}{q+1}$.
\end{theorem}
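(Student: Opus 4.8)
The plan is to treat this statement as an immediate specialization of Theorem \ref{MAIN2}: once one checks that the choice $\phi(r)=r^{\varrho}$ is admissible (i.e.\ verifies the hypothesis (\ref{e1}) with the announced value $m=\varrho$) and that the propagator $e^{-it\mathcal{L}^{\varrho}}$ genuinely solves the fractional Schr\"odinger initial value problem, the estimate falls out by substitution, with no new analytic input required.

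First I would confirm that $e^{-it\mathcal{L}^{\varrho}}f$ solves the stated IVP. Decomposing $f=\sum_{k\geq 0}\mathcal{Q}_k f$ via the spectral resolution of $\mathcal{L}$, one sets
\begin{align*}
e^{-it\mathcal{L}^{\varrho}}f=\sum_{k=0}^{\infty}e^{-it(2k+n)^{\varrho}}\mathcal{Q}_k f,
\end{align*}
and differentiating term by term yields $i\partial_t u=\mathcal{L}^{\varrho}u$ with $u(0,\cdot)=f$, exactly the same bookkeeping already used for (\ref{L}).

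Next I would verify (\ref{e1}) for $\phi(r)=r^{\varrho}$ with $m=\varrho$. Differentiating, $\phi'(r)=\varrho\,r^{\varrho-1}$, so $\phi'(r)\sim r^{\varrho-1}=r^{m-1}$ with comparability constant $\varrho$; and $\phi''(r)=\varrho(\varrho-1)\,r^{\varrho-2}$, whence $|\phi''(r)|=\varrho(1-\varrho)\,r^{\varrho-2}\gtrsim r^{m-2}$ for $r\geq 1$, the implied constant $\varrho(1-\varrho)$ being strictly positive precisely because $0<\varrho<1$. The only point that needs care is the degenerate endpoint $\varrho=0$: there $\phi'\equiv 0$, so (\ref{e1}) fails (consistently with the requirement $0<m\leq 1$ in Theorem \ref{MAIN2}); this case is therefore excluded, and is in any event trivial, since $e^{-it\mathcal{L}^{0}}$ reduces to multiplication by the scalar $e^{-it}$ and the left-hand side is time independent.

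Finally I would substitute $m=\varrho$ into the conclusion of Theorem \ref{MAIN2}. The range hypotheses $1\leq q<\frac{n}{n-1}$, $\frac{2}{p}+\frac{2n-1}{q}=2n-1$ and the exponent $\beta=\frac{2q}{q+1}$ are inherited verbatim, while the growth exponent becomes
\begin{align*}
\sigma=(2n(1-m)+m)\Big(1-\tfrac{1}{q}\Big)=(2n(1-\varrho)+\varrho)\Big(1-\tfrac{1}{q}\Big),
\end{align*}
which is exactly the value claimed. I do not anticipate any genuine obstacle: the entire content is the elementary admissibility check above, and all the analytic weight has already been absorbed into the restriction/Schatten estimate of Theorem \ref{MAIN} underlying Theorem \ref{MAIN2}.
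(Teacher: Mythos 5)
Your proposal is correct and matches the paper's own treatment: the paper likewise obtains this estimate as a direct specialization of Theorem \ref{MAIN2} with $\phi(r)=r^{\varrho}$ satisfying (\ref{e1}) for $m=\varrho$. Your explicit verification of (\ref{e1}) and your remark excluding the degenerate case $\varrho=0$ are the only (elementary) checks needed, exactly as the paper implicitly assumes.
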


\begin{rem}
Theorems \ref{main1}, \ref{MAIN2} and \ref{MAIN3} can be extended for a wider range of  $p,q \geq1$ such that  $\frac{2}{p}+\frac{2n - 1}{q} \geq 2n-1$,  using the inclusion properties of the spaces $L^p(-T_0,T_0)$ - spaces (see \cite{dl}). When $1 <q<\frac{n}{n-1}$, then there exists a $0<\beta \leq 1$ such that $\frac{2\beta}{p}+\frac{2n - 1}{q}=2n-1$. Since $\frac{p}{\beta} \geq p \geq 1$ the desired estimate follows from the fact that
$$\left\|\sum_{j} n_{j }\left|e^{-i t \phi(\mathcal{L})}f_{j }\right|^{2}\right\|_{L^p ((-T_0,T_0), L^q(\mathbb{C}^n))} \leq \left\|\sum_{j} n_{j }\left|e^{-i t \phi(\mathcal{L})}f_{j }\right|^{2}\right\|_{L^{\frac{p}{\beta}} ((-T_0,T_0), L^q(\mathbb{C}^n))}.$$
The case $p=1$ follows directly from the triangle inequality.
\end{rem}

\section{Endpoint case of Theorem \ref{main1}}\label{sec4}
	
By a standard duality argument in Schatten spaces (see \cite{frank} for  details), Theorem \ref{main1} can be equivalently restated in the following dual version.
\begin{theorem} Assume that $p', q', d \geq 1$ satisfy
	$$\frac{2n+1}{2} < p' \leq \infty \quad \mbox{and}\quad \frac{1}{q'} + \frac{n}{p'} = 1.$$
	we have
	\begin{align}\label{HIO}
		\left\|\int_{-\pi}^{\pi} e^{-it\mathcal{L}}V(t,z)e^{it\mathcal{L}} \,dt \right\|_{\mathcal{G}^{2q'}} \leq C \|V\|_{L^{p'} ((-\pi,\pi), L^{q'}(\mathbb{C}^n))}.
	\end{align}
	\end{theorem}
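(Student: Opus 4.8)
The plan is to obtain (\ref{HIO}) not by a fresh estimate but as the Schatten-space \emph{dual} of the orthonormal Strichartz inequality (\ref{CH0A1}) of Theorem \ref{main1}, using the density-matrix duality of Frank--Lewin--Lieb--Seiringer \cite{frank} (the same mechanism that underlies Lemma \ref{CH3dual}, only applied to the spatial operator $T_V := \int_{-\pi}^{\pi} e^{-it\mathcal{L}} V(t,\cdot) e^{it\mathcal{L}}\,dt$ rather than to $W\mathcal{E}\mathcal{E}^*\overline{W}$). The point is that $T_V$ is exactly the object trace-dual to the quadratic expression $\sum_j n_j |e^{-it\mathcal{L}}f_j|^2$, so the single nontrivial input is Theorem \ref{main1}; everything else is bookkeeping.

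Concretely, first I would encode an orthonormal system $(f_j)_j$ and coefficients $(n_j)_j$ as the self-adjoint density operator $\gamma = \sum_j n_j\,|f_j\rangle\langle f_j|$, and record the density identity
\begin{equation*}
	\rho_{e^{-it\mathcal{L}}\gamma\, e^{it\mathcal{L}}}(z) = \sum_{j} n_j\,\bigl|e^{-it\mathcal{L}} f_j(z)\bigr|^{2},
\end{equation*}
so that the left side of (\ref{CH0A1}) is $\bigl\|\rho_{e^{-it\mathcal{L}}\gamma e^{it\mathcal{L}}}\bigr\|_{L^{p}((-\pi,\pi),L^{q}(\mathbb{C}^n))}$ while, by orthonormality, the right side is $\|\gamma\|_{\mathcal{G}^{\beta}}$ with $\beta=\tfrac{2q}{q+1}$. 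Pairing the density against a potential $V$ and using cyclicity of the trace gives
\begin{equation*}
	\int_{-\pi}^{\pi}\!\!\int_{\mathbb{C}^n} V(t,z)\,\rho_{e^{-it\mathcal{L}}\gamma e^{it\mathcal{L}}}(z)\,dz\,dt = \operatorname{Tr}\!\left(\gamma\int_{-\pi}^{\pi} e^{it\mathcal{L}} V(t,\cdot) e^{-it\mathcal{L}}\,dt\right),
\end{equation*}
and a change of variable $t\mapsto -t$ (harmless since $(-\pi,\pi)$ is symmetric and all norms are invariant) turns the inner operator into $T_V$. Dualizing the space-time norm ($L^p_tL^q_x$ against $L^{p'}_tL^{q'}_x$) on one side and the Schatten norm ($\mathcal{G}^{\beta}$ against $\mathcal{G}^{\beta'}$) on the other shows that (\ref{CH0A1}) is equivalent to $\|T_V\|_{\mathcal{G}^{\beta'}}\le C\,\|V\|_{L^{p'}((-\pi,\pi),L^{q'}(\mathbb{C}^n))}$, which is (\ref{HIO}).

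It then remains to match indices, which is purely computational. Reading $p',q'$ as the Hölder conjugates of the $p,q$ of Theorem \ref{main1}, the potential automatically lives in $L^{p'}((-\pi,\pi),L^{q'}(\mathbb{C}^n))$, and the Schatten exponent is $\beta'=\bigl(\tfrac{2q}{q+1}\bigr)'=\tfrac{2q}{q-1}=2q'$, i.e.\ the stated $\mathcal{G}^{2q'}$; the admissibility $\tfrac1p+\tfrac nq=n$ together with $1\le q<\tfrac{2n+1}{2n-1}$ passes, under $q\mapsto q'$ and $p\mapsto p'$, to the range and relation imposed on $(p',q')$ in the statement by a one-line Hölder-conjugation check.

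The only step needing genuine care — and hence the main ``obstacle'' — is making this duality rigorous rather than formal, since the analytic content is entirely supplied by Theorem \ref{main1}. I would (i) reduce from a general $V$ and a general trace-class $\gamma$ to the dense class of finite-rank, sufficiently regular objects on which $T_V$, the density $\rho$, and the trace pairing are manifestly well defined; (ii) run the two supremum characterizations simultaneously, namely $\sup_{\gamma}\operatorname{Tr}(\gamma\,T_V)/\|\gamma\|_{\mathcal{G}^{\beta}}=\|T_V\|_{\mathcal{G}^{\beta'}}$ and the analogous $L^{p}_tL^{q}_x$ duality, including the passage from nonnegative to general self-adjoint $\gamma$; and (iii) verify that $T_V$ is compact so that its $\mathcal{G}^{2q'}$-norm really is computed by its singular values. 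Each of these is standard and is carried out in \cite{frank} (and in the companion Lemma \ref{CH3dual}, equivalently Lemma~3 of \cite{frank1}), so in the write-up I would invoke that duality with $Af=e^{-it\mathcal{L}}f$ and simply read the equivalence in the reverse direction, leaving only the exponent identification above.
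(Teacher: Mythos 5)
Your proposal is correct and is exactly the paper's proof: the paper obtains (\ref{HIO}) from Theorem \ref{main1} by the same density-matrix/trace duality, merely citing \cite{frank} for the details you spell out (encoding $(f_j,n_j)$ as $\gamma$, the trace pairing with $V$, and the $\mathcal{G}^{\beta}$--$\mathcal{G}^{\beta'}$ plus $L^p_tL^q_z$--$L^{p'}_tL^{q'}_z$ dualization with $\beta'=2q'$). One caveat on the final bookkeeping: Hölder conjugation of $\frac{1}{p}+\frac{n}{q}=n$, $1\le q<\frac{2n+1}{2n-1}$ yields $\frac{1}{p'}+\frac{n}{q'}=1$ with $\frac{2n+1}{2}<q'\le\infty$ (which is what makes the paper's endpoint identification $2q'=2n+1$ work), so the conditions as printed in the statement have $p'$ and $q'$ transposed --- a typo in the paper's statement rather than a gap in your argument, but your claim that the conjugation check "passes" should be read as matching the corrected conditions, not the literal ones.
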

The estimate (\ref{Sch1}) at the endpoint $(q,\beta) = (\frac{2n+1}{2n -1},\frac{2q}{q+1})$ corresponds to the estimate (\ref{HIO}) at $2q'=2n+1$.  Hence, Theorem \ref{end} follows from the following result.

\begin{theorem} There exists  $0 \neq V \in  {L^{2n+1} ((-\pi,\pi), L^{\frac{2n+1}{2}}(\mathbb{C}^n))}$ such that
	\begin{align}
		Tr\left(\int_{-\pi}^{\pi} e^{it\mathcal{L}}V(t,z)e^{-it\mathcal{L}}\, dt \right)^{2n+1} = \infty.
	\end{align}
\end{theorem}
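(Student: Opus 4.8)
The plan is to produce an explicit \emph{time-independent} potential for which the trace diagonalizes and can be bounded below. Take $V(t,z)=\chi_B(z)$, the indicator of a fixed ball $B\subset\mathbb{C}^n$; since $B$ is bounded and $(-\pi,\pi)$ is finite, $\chi_B\in L^{2n+1}((-\pi,\pi),L^{(2n+1)/2}(\mathbb{C}^n))$, and $\chi_B\geq 0$, $\chi_B\neq 0$. Writing $e^{\pm it\mathcal{L}}=\sum_k e^{\pm it(2k+n)}\mathcal{Q}_k$ and using the orthogonality $\int_{-\pi}^{\pi}e^{2i(k-l)t}\,dt=2\pi\delta_{kl}$ (this is precisely what the interval $(-\pi,\pi)$ is for), the operator collapses to the block-diagonal form
\[
A:=\int_{-\pi}^{\pi}e^{it\mathcal{L}}\chi_B\,e^{-it\mathcal{L}}\,dt=2\pi\sum_{k=0}^{\infty}\mathcal{Q}_k\chi_B\mathcal{Q}_k\ \geq 0 .
\]
Because $\mathcal{Q}_k\mathcal{Q}_l=\delta_{kl}\mathcal{Q}_k$, the summands act on mutually orthogonal subspaces, so $A$ is positive and $\mathrm{Tr}(A^{2n+1})=(2\pi)^{2n+1}\sum_k\mathrm{Tr}\big[(\mathcal{Q}_k\chi_B\mathcal{Q}_k)^{2n+1}\big]$. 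It thus suffices to prove $\sum_k\mathrm{Tr}[(\mathcal{Q}_k\chi_B\mathcal{Q}_k)^{2n+1}]=\infty$.

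Next I would estimate only the first two spectral moments of the positive Hilbert--Schmidt operator $B_k:=\mathcal{Q}_k\chi_B\mathcal{Q}_k$, with eigenvalues $\{\lambda_i^{(k)}\}\subset[0,1]$. Using the kernel $q_k(z,w)=(2\pi)^{-n}\varphi_k(z-w)e^{\frac{i}{2}\operatorname{Im}(z\cdot\bar w)}$ of $\mathcal{Q}_k$ gives the exact value $S_1:=\mathrm{Tr}(B_k)=(2\pi)^{-n}\varphi_k(0)\,|B|=(2\pi)^{-n}\binom{k+n-1}{n-1}|B|\sim k^{n-1}$, together with
\[
S_2:=\mathrm{Tr}(B_k^2)=(2\pi)^{-2n}\int_B\!\int_B|\varphi_k(z-w)|^2\,dz\,dw .
\]
The key point is the lower bound $S_2\gtrsim k^{n-3/2}$, which I would extract from the near-origin (Hilb/Bessel-regime) asymptotics of the Laguerre function $\varphi_k$, namely $|\varphi_k(u)|^2\sim k^{n-3/2}|u|^{-(2n-1)}\cos^2(\cdots)$ for $k^{-1/2}\lesssim|u|\lesssim 1$: integrating the nonnegative integrand over a fixed annulus of differences $z-w$, where the oscillation averages to a constant, yields $S_2\gtrsim k^{n-3/2}$.

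From these two moments I would run a truncation plus H\"older argument to reach the $(2n+1)$-st moment. Fix a threshold $\tau=c\,k^{-1/2}$ with $c$ small, and set $N_\tau=\#\{i:\lambda_i^{(k)}\geq\tau\}$. Markov's inequality gives $N_\tau\leq S_1/\tau\lesssim k^{n-1/2}$, while $\sum_{\lambda_i<\tau}(\lambda_i^{(k)})^2\leq\tau S_1\leq\tfrac12 S_2$, so $\sum_{\lambda_i\geq\tau}(\lambda_i^{(k)})^2\geq\tfrac12 S_2\gtrsim k^{n-3/2}$. Applying H\"older with exponents $\tfrac{2n+1}{2}$ and $\tfrac{2n+1}{2n-1}$ over the $N_\tau$ large eigenvalues,
\[
\mathrm{Tr}(B_k^{2n+1})\geq\sum_{\lambda_i\geq\tau}(\lambda_i^{(k)})^{2n+1}\geq\Big(\sum_{\lambda_i\geq\tau}(\lambda_i^{(k)})^2\Big)^{\frac{2n+1}{2}}N_\tau^{-\frac{2n-1}{2}}\gtrsim\big(k^{n-3/2}\big)^{\frac{2n+1}{2}}\big(k^{n-1/2}\big)^{-\frac{2n-1}{2}}=k^{-1},
\]
the exponents cancelling to exactly $-1$. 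Summing over $k$ yields $\mathrm{Tr}(A^{2n+1})\gtrsim\sum_k k^{-1}=\infty$, as required.

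The main obstacle is the single estimate $S_2\gtrsim k^{n-3/2}$: it is the only step needing hard analysis, as it hinges on the precise transition of $\varphi_k$ between its central plateau on $|u|\lesssim k^{-1/2}$ and the oscillatory Bessel regime. The saving feature is that $S_2$ is manifestly nonnegative, so no cancellation must be controlled and it is enough to integrate a pointwise lower envelope of $|\varphi_k|^2$ over a fixed annulus; everything else is soft (the exact computation of $S_1$, Markov's inequality, and H\"older), and, notably, the argument uses only the two moments $S_1,S_2$ rather than sharp restriction extremizers.
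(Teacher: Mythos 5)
Your proposal is correct, but it takes a genuinely different route from the paper. The paper keeps a time-dependent potential: it writes out the kernel of $B_V=\int_{-\pi}^{\pi}e^{it\mathcal{L}}V e^{-it\mathcal{L}}\,dt$ from the explicit Mehler-type formula for the propagator, passes to the symplectic Fourier transform, and after a $\tan t$ change of variables chooses $V$ so that $\widehat{B_V}(p,q)=\widehat{V_1}(|q|^2-|p|^2,\,q-p)$; this transfers the problem to the flat Schr\"odinger endpoint counterexample, concluding by "proceeding as in Proposition 2 of \cite{frank}". You instead exploit the discreteness of the spectrum: since $k-l\in\mathbb{Z}$, the time average over $(-\pi,\pi)$ kills all off-diagonal blocks, so with the time-independent choice $V=\chi_B$ the operator becomes $2\pi\sum_k \mathcal{Q}_k\chi_B\mathcal{Q}_k$, block-diagonal over mutually orthogonal eigenspaces, and the claim reduces to $\sum_k \mathrm{Tr}\bigl[(\mathcal{Q}_k\chi_B\mathcal{Q}_k)^{2n+1}\bigr]=\infty$. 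Your moment computation checks out: $S_1\sim k^{n-1}$ is exact, $S_2\gtrsim k^{n-3/2}$ follows from the classical Szeg\H{o}/Hilb asymptotics of $\varphi_k$ on a fixed annulus (this is the one step you should support by a citation, e.g.\ Szeg\H{o}'s asymptotic formula or Thangavelu's monograph \cite{Thanga}, rather than a sketch), and the Markov-plus-H\"older exponents do cancel to give $\mathrm{Tr}(B_k^{2n+1})\gtrsim k^{-1}$, a logarithmic divergence. This is exactly the expected endpoint behavior and is consistent with the positive results: your example gives $\mathrm{Tr}(B_k^{2q'})\sim k^{n-1/2-q'}$, summable precisely when $2q'>2n+1$, so it refutes only the endpoint. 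Comparing the two: the paper's transference is short modulo the Euclidean result and needs no Laguerre asymptotics, but hides the divergence mechanism inside the cited proposition; your argument is self-contained, uses a potential lying in every mixed-norm space (hence a formally stronger failure), and makes the mechanism transparent --- for each $k$ the localized projection $\mathcal{Q}_k\chi_B\mathcal{Q}_k$ has an essentially flat spectrum of $\sim k^{n-1/2}$ eigenvalues of size $\sim k^{-1/2}$, matching the sharpness of the local bound $\|\chi_B\mathcal{Q}_k\chi_B\|_{L^2\to L^2}\sim k^{-1/2}$.
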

\begin{proof}
Define the operator
\begin{align}
	B_V := \int_{-\pi}^{\pi} e^{it\mathcal{L}}V(t,z)e^{-it\mathcal{L}}\, dt,
\end{align}
whose kernel can be calculated as 
$$B_V(z,w) = \int_{-\pi}^{\pi} (2 \pi i \sin t)^{-2n} \int_{\mathbb{C}^n} e^{-\frac{i}{2} \Im(z\cdot\bar{\zeta})}e^{-i \cot t \frac{|z-\zeta|^2}{4}} V(t,\zeta) e^{\frac{i}{2} \Im(w\cdot\bar{\zeta})}e^{  i \cot t \frac{|w-\zeta|^2}{4}} ~d\zeta~dt,$$
for a $V(t,z)$ such that the integral makes sense, we choose $V(t,z)$ precisely later.
For $f \in \mathcal{S}(\mathbb{C}^n)$ (the Schwartz space on $\mathbb{C}^n$), let $\mathcal{F}_s$ be the symplectic Fourier transform on $\mathbb{C}^n$ given by
\[
\mathcal{F}_s f(z)
= \int_{\mathbb{C}^n} 
f(w)\, e^{-\, \frac{i}{2}\, \Im(z\cdot \bar{w})} \, dw.
\]
Now the kernel of $B_V$ in the symplectic Fourier space is given by
\begin{align}\label{KUY}
	\widehat{B_V}(p,q)= &C\int_{-\pi}^{\pi} ( \cos t)^{-2n} \int_{\mathbb{C}^n} V(t,\zeta)  e^{i \tan t \frac{|p+\zeta|^2}{4}} e^{-i \tan t \frac{|q+\zeta|^2}{4}} e^{-\frac{i }{2}\Im((q-p)\cdot\bar{\zeta})} ~d\zeta~dt\nonumber\\
	=&C\int_{-\pi}^{\pi} (\cos t)^{-2n} e^{i \tan t (|p|^2 - |q|^2)} \int_{\mathbb{C}^n} V(t,\zeta)  e^{-\frac{i}{2} \Im((q-p)\cdot\bar{\eta})} ~d\zeta~dt
\end{align}
where $\eta = \eta_1 + i \eta_2$ is given by
\begin{align}\label{RTY}
	\eta_1 =\zeta_1 + \tan t\,\zeta_2  , \quad \eta_2 =- \tan t\,\zeta_1 + \, \zeta_2 .
\end{align}
Now, let $0 \neq V_1(t,z) \in L^\infty_c(\mathbb{R}\times\mathbb{C}^n)$ be a non-negative function such that $\widehat{V_1} = \mathcal{F}_t\mathcal{F}_{ s,z} V$ is non-negative.
Here $\mathcal{F}_{ s,z}$ is the symplectic Fourier transform with respect to $z$-variable, and $\mathcal{F}_t$ denotes the Fourier transform with respect to $t$-variable.
 Now consider 
$$V(t,z) = \chi_{(-\frac{\pi}{2}, \frac{\pi}{2})} V_1\left(\frac{\tan t}{4}, \zeta_1 + \tan t\,\zeta_2+ i(- \tan t\,\zeta_1 + \, \zeta_2)\right)\sec^2 t,$$
one can check that $V \in {L^{2n+1} ((-\pi,\pi), L^{\frac{2n+1}{2}}(\mathbb{C}^n))}$. Performing the change of variable (\ref{RTY}) in (\ref{KUY}), we obtain
\begin{align*}
	\widehat{B_V}(p,q)= &C \int_{-\frac{\pi}{2}}^{\frac{\pi}{2}} e^{i \frac{\tan t}{4} (|p|^2 - |q|^2)} \sec^2 t\int_{\mathbb{C}^n} V\left(\frac{\tan t}{4},\eta\right)  e^{-\frac{i}{2} \Im((q-p)\cdot\bar{\eta})} ~d\zeta~dt\\
	=&\widehat{V_1}(|q|^2-|p|^2, q-p),
\end{align*} where the last equality is obtained by a obvious change of variable.
Hence, we deduce that
$$Tr (B_V^{2n+1}) = \int_{\mathbb{C}^n} dp \int_{\mathbb{C}^n} dp_1 \cdots \int_{\mathbb{C}^n} dp_{2n}\,\, \widehat{B_V}(p,p_1) \widehat{B_V}(p_1,p_2)\cdots \widehat{B_V}(p_{2n},p).$$
Now, proceding as in the proof of Proposition 2 in \cite{frank}, we obtain $Tr (B_V^{2n+1}) = +\infty$.

\end{proof}

\section{Restriction theorem for the special Hermite spectral projections}\label{sec5}
In this section we obtain Schatten bounds the form (\ref{Q1}) for the special Hermite spectral projections. We follow the complex interpolation argument in Schatten spaces, due to Frank and Sabin \cite{frank1}. We begin by briefly recalling the notion of analytic families of operators in the sense of Stein, which plays a central role in this approach.

A family of operators $(G_\alpha)$ on $\mathbb{C}^n$ defined on a strip 
$a \le \mathrm{Re}\, \alpha \le b$ in the complex plane (with $a < b$) is said to be 
{analytic in the sense of Stein} if, for all simple functions 
$f, g$ on $\mathbb{C}^n$ (that is, functions that take a finite number of non-zero 
values on sets of finite measure in $\mathbb{C}^n$), the map
\[
\alpha \mapsto \langle g, G_\alpha f \rangle
\]
is analytic in the strip $a < \mathrm{Re}\, \alpha < b$, continuous on the closure 
$a \le \mathrm{Re}\, \alpha \le b$, and if
\[
\sup_{a \le \lambda \le b} 
\big| \langle g, G_{\lambda + i\tau} f \rangle \big| \le C(\tau),
\]
for some function $C(\tau)$ with at most a (double) exponential growth in $\tau$. We have the following Schatten space estimate obtained in \cite{frank1}.

\begin{prop}\cite{frank1}\label{F}
	Let $(G_\alpha)$ be an analytic family of operators on $\mathbb{C}^n$ in the sense of Stein, defined on the strip 
	$-\lambda_0 \le \mathrm{Re}\, \alpha \le 0$ for some $\lambda_0 > 1$. Assume that we have the bounds
	\[
	\|G_{i\tau}\|_{L^2 \to L^2} \le M_0 e^{a|\tau|}, 
	\qquad 
	\|G_{-\lambda_0 + i\tau}\|_{L^1 \to L^\infty} \le M_1 e^{b|\tau|}, 
	\qquad ~\mbox{for all}~\tau \in \mathbb{R},
	\]
	for some $a,b \ge 0$ and some $M_0, M_1 \ge 0$. Then, for all $W_1, W_2 \in L^{2\lambda_0}(\mathbb{C}^n)$, 
	the operator $W_1 G_{-1} W_2$ belongs to the Schatten class 
	$\mathcal{G}^{2\lambda_0}(L^2(\mathbb{C}^n))$ and we have the estimate
	\[
	\|W_1 G_{-1} W_2\|_{\mathcal{G}^{2\lambda_0}(L^2(\mathbb{C}^n))} 
	\le 
	M_0^{1 - \frac{1}{\lambda_0}} 
	M_1^{\frac{1}{\lambda_0}}
	\|W_1\|_{L^{2\lambda_0}(\mathbb{C}^n)} 
	\|W_2\|_{L^{2\lambda_0}(\mathbb{C}^n)}.
	\]
\end{prop}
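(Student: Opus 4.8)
The plan is to run Stein's complex interpolation argument directly in the Schatten scale, exactly in the spirit of \cite{frank1}. By homogeneity of both sides in $W_1$ and in $W_2$, and since simple functions are dense in $L^{2\lambda_0}(\mathbb{C}^n)$, I would first reduce to simple $W_1, W_2$ normalized so that $\|W_1\|_{L^{2\lambda_0}} = \|W_2\|_{L^{2\lambda_0}} = 1$, writing the polar decompositions $W_j = u_j |W_j|$ with $|u_j| = 1$ on the support of $W_j$. The point $\alpha = -1$ sits at relative position $\theta = 1/\lambda_0$ in the strip $-\lambda_0 \le \mathrm{Re}\,\alpha \le 0$, so the whole argument amounts to interpolating $\alpha=-1$ between the two boundary lines $\mathrm{Re}\,\alpha = 0$ and $\mathrm{Re}\,\alpha = -\lambda_0$.

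The key construction is the auxiliary family
\[
F_\alpha := u_1\, |W_1|^{-\alpha}\, G_\alpha\, |W_2|^{-\alpha}\, u_2, \qquad -\lambda_0 \le \mathrm{Re}\,\alpha \le 0,
\]
tailored so that $F_{-1} = W_1 G_{-1} W_2$ is exactly the operator to be estimated. That $F_\alpha$ is analytic in the sense of Stein follows from the hypothesis on $G_\alpha$ together with the entire dependence of $z \mapsto |W_j(z)|^{-\alpha} = e^{-\alpha \log |W_j(z)|}$ on $\alpha$; for simple $W_j$ these extra factors are bounded analytic multipliers, so the admissible growth of $\alpha \mapsto \langle g, F_\alpha f\rangle$ is inherited from that of $G_\alpha$.

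Next I would establish the two endpoint bounds. On $\mathrm{Re}\,\alpha = 0$ the multipliers $u_1,\,|W_1|^{-i\tau},\,|W_2|^{-i\tau},\,u_2$ are all unimodular, hence unitary on $L^2$, so
\[
\|F_{i\tau}\|_{\mathcal{G}^\infty} = \|F_{i\tau}\|_{L^2 \to L^2} \le \|G_{i\tau}\|_{L^2 \to L^2} \le M_0\, e^{a|\tau|}.
\]
On $\mathrm{Re}\,\alpha = -\lambda_0$ the $L^1 \to L^\infty$ hypothesis says $G_{-\lambda_0+i\tau}$ has kernel bounded by $M_1 e^{b|\tau|}$; the kernel of $F_{-\lambda_0+i\tau}$ then carries the weights $|W_1(z)|^{\lambda_0}$ and $|W_2(w)|^{\lambda_0}$, so its Hilbert--Schmidt norm is controlled by
\[
\|F_{-\lambda_0+i\tau}\|_{\mathcal{G}^2}^2 \le (M_1 e^{b|\tau|})^2 \int_{\mathbb{C}^n} |W_1(z)|^{2\lambda_0}\,dz \int_{\mathbb{C}^n} |W_2(w)|^{2\lambda_0}\,dw = (M_1 e^{b|\tau|})^2,
\]
using the normalization. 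This is the step that links the Schatten exponent $2\lambda_0$ to the weight integrability $L^{2\lambda_0}$, and is the genuine content of the proposition: the flat $L^1\to L^\infty$ kernel bound upgrades to Hilbert--Schmidt precisely because the weights $|W_j|^{\lambda_0}$ are square-integrable.

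Finally I would invoke Stein interpolation in its operator-ideal form, using the complex interpolation identity $[\mathcal{G}^\infty,\mathcal{G}^2]_\theta = \mathcal{G}^{2/\theta}$ at $\theta = 1/\lambda_0$, which returns membership in $\mathcal{G}^{2\lambda_0}$ with the interpolated constant
\[
\|F_{-1}\|_{\mathcal{G}^{2\lambda_0}} \le M_0^{\,1 - 1/\lambda_0}\, M_1^{\,1/\lambda_0};
\]
undoing the normalization (by the noted homogeneity) reinstates the factor $\|W_1\|_{L^{2\lambda_0}}\|W_2\|_{L^{2\lambda_0}}$ and gives the claim, and a density argument removes the simplicity assumption on $W_1,W_2$. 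The one delicate point is ensuring that the exponential factors $e^{a|\tau|}, e^{b|\tau|}$ (and the double-exponential growth permitted in the definition of a Stein-analytic family) are admissible in the interpolation theorem; since the logarithms of the line-norms grow only linearly in $|\tau|$, this stays comfortably within the range allowed by the three-lines lemma underlying Stein interpolation and produces no loss in the stated constant.
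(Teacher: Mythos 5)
Your proof is correct and follows essentially the same route as the paper's source: the paper states this proposition without proof, citing Frank--Sabin \cite{frank1}, and your argument --- the auxiliary family $F_\alpha = u_1|W_1|^{-\alpha}G_\alpha|W_2|^{-\alpha}u_2$ for normalized simple weights, the $\mathcal{G}^\infty$ bound on $\mathrm{Re}\,\alpha=0$, the Hilbert--Schmidt bound on $\mathrm{Re}\,\alpha=-\lambda_0$ obtained by pairing the kernel bound with the square-integrability of $|W_j|^{\lambda_0}$, and Stein interpolation in the Schatten scale at $\theta = 1/\lambda_0$ --- is precisely the proof given there. The only cosmetic slip is calling the factors $u_j|W_j|^{-i\tau}$ unitary: on the set where $W_j$ vanishes they vanish as well, so they are merely contractions on $L^2$, which is all the operator-norm endpoint bound requires.
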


We are now in a position to prove Theorem \ref{Thm1}. To this end, we  use above proposition by defining the same analytic family of operators introduced in  \cite{Ratna1}.
\vspace{6pt}

\noindent{\bf \emph{Proof of Theorem \ref{Thm1}} : } 
For $\alpha \in \mathbb{C}$ with $\Re\alpha > -1$, consider 
\begin{equation}
	\psi_k^{\alpha}(z) = \frac{\Gamma(k+1) \Gamma\alpha +1}{\Gamma(k+\alpha+1)} L_{k}^{\alpha}\left(\frac{1}{2}|z|^{2}\right) e^{-\frac{1}{4}|z|^{2}}.
\end{equation}
We then set
\begin{align*}
	G^\alpha_k f(z) =  f \times \psi_k^{\alpha + n}(z), \quad f\in \mathcal{S}(\mathbb{C}^n).
\end{align*}
The family $(G_k^\alpha)_\alpha$ forms an analytic family of operators in the strip $-n - \tfrac{1}{2} \leq \Re \alpha \leq 0$ (see \cite{Ratna1}). From \cite{Ratna1} and \cite{Than1}, we have the following estimates
\begin{align}\label{aw2}
	\|G_k^{-\lambda_0 + i \tau} f\|_{L^\infty(\mathbb{C}^n)} \leq C (1+ |\tau|)^{2/3} \|f\|_{L^1(\mathbb{C}^n)}, \quad 0 \leq \lambda_0 < n+1/3,
\end{align}
and 
\begin{align}\label{e2}
\hspace{-2cm}	\|G_k^{i \tau} f\|_{L^2(\mathbb{C}^n)} \leq C (1+ |\tau|)^{n} k^{-n} \|f\|_{L^2(\mathbb{C}^n)}.
\end{align}
Also we have $$ \mathcal{Q}_k f(z) = \frac{\Gamma(k+n)\Gamma(n)}{\Gamma(k+1)} G^{-1}_kf(z)$$
and $\frac{\Gamma(k+n)\Gamma(n)}{\Gamma(k+1)} \leq C k^{n-1}.$ Thus by Proposition \ref{F}, we obtain
\begin{align}\label{P1}
	\left\|W_1 \mathcal{Q}_k {W_2}\right\|_{\mathcal{G}^{2\lambda_0}\left(L^{2}\left( \mathbb{C}^{n} \right)\right)} \leq Ck^{\frac{n}{\lambda_0} -1} \| W_1\|_{L^{2\lambda_0}(\mathbb{C}^n)} \|W_2\|_{L^{2\lambda_0}(\mathbb{C}^n)},
\end{align}
for $1 \leq \lambda_0 < n+\frac{1}{3}$.

To complete the proof of the theorem, we observe that $W_1\mathcal{Q}_k W_2 = (W_1 \mathcal{Q}_k)(\overline{W_2} \mathbb{Q}_k)^*$. The operator $W\mathcal{Q}_k$ acts from $L^2(\mathbb{C}^n)$ to  $L^2(\mathbb{C}^n)$ as an integral operator with integral kernel $K(w,z) = W(z)\varphi_k(z-w) e^{i Im(z\bar{w})}$, where $z, w \in \mathbb{C}^n$. If $W \in L^2(\mathbb{C}^n)$, then $W\mathcal{Q}_k$ is Hilbert-Schmidt, in fact,
\begin{align*}
\|W \mathcal{Q}_k\|_{\mathcal{G}^2(L^2(\mathbb{C}^n))}^2 
= \int_{\mathbb{C}^n} |W(z)|^2 
\left( \int_{\mathbb{C}^n} |\varphi_k(z - w)|^2 \, dw \right) dz
 \leq C k^{n-1} \|W\|^2_{L^2(\mathbb{C}^n)},
\end{align*}
since $\|\varphi_k\|_{L^2(\mathbb{C}^n)} \leq C k^{\frac{n-1}{2}}$. Now, by H\"older's inequality for trace ideals, we get
\begin{align}\label{P2}
	\|W_1 \mathcal{Q}_k W_2\|_{\mathcal{G}^1(L^2(\mathbb{C}^n))}
	\leq C k^{n-1} \|W_1\|_{L^2(\mathbb{C}^n)}\|W_2\|_{L^2(\mathbb{C}^n)}.
\end{align}
By complex interpolation between (\ref{P2}) and (\ref{P1}) with $\lambda_0$ close to $ n + \frac{1}{3}$, we get 
	\begin{align*}
	\left\|W_1 \mathcal{Q}_k {W_2}\right\|_{\mathcal{G}^{\frac{(6n - 4)q}{6n-1-3q}}\left(L^{2}\left( \mathbb{C}^{n} \right)\right)} \leq Ck^{\frac{n}{q} - 1} \| W_1\|_{L^{2q}(\mathbb{C}^n)} \|W_2\|_{L^{2q}(\mathbb{C}^n)}.
\end{align*} for $1 \leq q <  \frac{3n+1}{3}$.  By setting $2q = \frac{2p}{2-p}$, we obtain the estimate (\ref{Q1}).
$\hfill{\square}$	
	
\vspace{6pt}

In the proof of Theorem \ref{Thm1}, we do not expect a global dispersive estimate of the form (\ref{aw2}) when $\alpha = -n - \tfrac{1}{2} + i\tau, \tau\in \mathbb{R}$ (see Lemma 1 of \cite{Markett}). However, the estimate behaves well on compact subsets (see \cite{Than3}). Using this fact we derive an improved local estimate as in Theorem \ref{Thm3}.

\vspace{6pt}
\noindent{\bf \emph{Proof of Theorem \ref{Thm3}} : }  Let $B$ be a fixed compact set on $\mathbb{C}^n$. We consider the following analytic family of operators, for  $\alpha \in \mathbb{C}$, let
\begin{align*}
	G^\alpha_k f(z) = (\chi_B f )\times \psi_k^{\alpha + n}(z),\quad f\in \mathcal{S}(\mathbb{C}^n).
\end{align*}
We have the following estimates
\begin{align}
	\|G_k^{-(n+\frac{1}{2}) + i \tau} f\|_{L^\infty(\mathbb{C}^n)} \leq& C_B (1+ |\tau|)^{1/2} \|f\|_{L^1(\mathbb{C}^n)},\label{E1} \\
	\hspace{.2cm}	\|G_k^{i \tau} f\|_{L^2(\mathbb{C}^n)} \leq &C_B (1+ |\tau|)^{n} k^{-n} \|f\|_{L^2(\mathbb{C}^n)}\label{E2}.
\end{align}
Estimate (\ref{E1}) can be derived from estimate (3.7) in \cite{Than3}, while estimate (\ref{E2}) follows from the better estimate (\ref{e2}). For $\alpha = -1$, we have $$ \mathcal{Q}_k \chi_Bf(z) = \frac{\Gamma(k+n)\Gamma(n)}{\Gamma(k+1)} G^{-1}_kf(z).$$
 Now, by Proposition \ref{F} we obtain
\begin{align}\label{EA2}
	\left\|W_1 \chi_B\mathcal{Q}_k\chi_B {W_2}\right\|_{\mathcal{G}^{2n+1}\left(L^{2}\left( \mathbb{C}^{n} \right)\right)} \leq Ck^{-\frac{1}{2n+1}} \| W_1\|_{L^{2n+1}(\mathbb{C}^n)} \|W_2\|_{L^{2n+1}(\mathbb{C}^n)}.
\end{align}

We observe that $W_1\chi_B\mathcal{Q}_k\chi_B W_2 = (W_1 \chi_B\mathcal{Q}_k)(\overline{W_2}\chi_B \mathbb{Q}_k)^*$. The operator $W\chi_B\mathcal{Q}_k$ acts on $L^2(\mathbb{C}^n)$  as an integral operator with  kernel $W(z)\chi_B(z)\varphi_k(z-w) e^{i Im(z\bar{w})}$, where $z, w \in \mathbb{C}^n$. If $W \in L^2(\mathbb{C}^n)$, then $W\chi_B\mathcal{Q}_k$ is Hilbert-Schmidt, with
\begin{align*}
	\|W \chi_B \mathcal{Q}_k\|_{\mathcal{G}^2(L^2(\mathbb{C}^n))}^2 
	\leq C_B k^{n-1} \|W\|^2_{L^2(\mathbb{C}^n)}.
\end{align*}
 Now, by H\"older's inequality for trace ideals, it follows that
\begin{align}\label{EA1}
	\|W_1 \chi_B\mathcal{Q}_k \chi_BW_2\|_{\mathcal{G}^1(L^2(\mathbb{C}^n))}
	\leq C_B k^{n-1} \|W_1\|_{L^2(\mathbb{C}^n)}\|W_2\|_{L^2(\mathbb{C}^n)}.
\end{align}

Furthermore, it is evident that $\mathcal{Q}_k\chi_B$ is bounded on $L^2(\mathbb{C}^n)$; that is, $\|\mathcal{Q}_k\chi_Bf\|_{L^2(\mathbb{C}^n)} \leq C_B \|f\|_{L^2(\mathbb{C}^n)}$. If $W_1, W_2 \in L^\infty(\mathbb{C}^n)$, then applying H\"older's inequality, we obtain
\begin{align}\label{EA3}
	\|W_1 \chi_B\mathcal{Q}_k \chi_BW_2\|_{\mathcal{G}^\infty(L^2(\mathbb{C}^n))}
	\leq C_B  \|W_1\|_{L^\infty(\mathbb{C}^n)}\|W_2\|_{L^\infty(\mathbb{C}^n)}.
\end{align}
By complex interpolation between (\ref{EA2}) and (\ref{EA1}) we get the estimate (\ref{Q2}) for $1 \leq p \leq \frac{2(2n+1)}{2n+3}$, and interpolating (\ref{EA2}) and (\ref{EA3}) we obtain (\ref{Q2}) for $\frac{2(2n+1)}{2n+3} \leq p \leq 2$, which completes the proof of the theorem.
$\hfill{ \square}$	

\section*{Acknowledgments}

The first author thanks the  Indian Institute of Technology Guwahati for the support provided during the period of this work.

\end{document}